\documentclass[10pt]{article}
\usepackage[latin1]{inputenc}

\usepackage{enumitem}
\usepackage{amsthm}
\usepackage{amsmath,amssymb,wasysym,amsbsy}
\usepackage{bm}
\usepackage[dvips]{graphicx}
\usepackage{multicol,array,lscape}
\usepackage{nonfloat}
\usepackage[small,bf]{caption2}
\usepackage{breqn}
\usepackage[dvipsnames,svgnames,x11names]{xcolor}
\usepackage{pst-all}
\usepackage{pst-3dplot}
\usepackage{authblk}
\pagestyle{myheadings}
\usepackage{tikz}
\usepackage{pgfplots}

\usetikzlibrary{arrows,decorations.pathmorphing,backgrounds,positioning,fit,petri}

\usetikzlibrary{arrows,shapes}
\usepackage{dot2texi}

\usetikzlibrary{positioning}

\usetikzlibrary{trees}

\usepackage{cancel} 
\usepackage{soul} 
\thispagestyle{plain}

\markboth{Elder}{Interspecies competition model with chemoattraction in fluids}
\usepackage[%
  colorlinks=true,%
	linkcolor	=blue,%
	citecolor =blue,%
  urlcolor=blue%
]{hyperref}
\usepackage[left=3cm,right=3cm, top=3.3cm, bottom=3cm]{geometry}
\newtheorem{theo}{Theorem}[section]
\newtheorem{lemm}[theo]{Lemma}

\newtheorem{coro}[theo]{Corollary}
\newtheorem{prop}[theo]{Proposition}

\newtheorem{defi}[theo]{Definition}
\newtheorem{remark}[theo]{Remark}

\def\x        {\textit {\textbf{x}}}

\numberwithin{equation}{section}

\title{Theoretical and numerical analysis for an interspecies competition model with chemoattraction-consumption in fluids}
\author[a]{Carlos M. Beltr\'an-Larrotta}
\author[a]{Diego A. Rueda-G\'omez \thanks{Corresponding author:
\href{mailto:diaruego@uis.edu.co}{diaruego@uis.edu.co}  (D. A. Rueda-G\'omez).}}
\author[a]{\'Elder J. Villamizar-Roa}
\affil[a]{Universidad Industrial de Santander, Escuela de Matem\'{a}ticas, A.A. 678, Bucaramanga, Colombia.}

\DeclareRobustCommand{\uvec}[1]{{%
  \ifcsname uvec#1\endcsname
     \csname uvec#1\endcsname
   \else
    \bm{\hat{\mathbf{#1}}}%
   \fi
}}
\date{}
\begin{document}
\maketitle
\begin{abstract}
This work is devoted to the theoretical and numerical analysis of a two-species chemotaxis-Navier-Stokes system with Lotka-Volterra competitive kinetics in a bounded domain of $\mathbb{R}^d$, $d=2,3$. First,  we study the existence of global weak solutions  and establish a regularity criterion which provides sufficient conditions to ensure the strong regularity of the weak solutions. After,
we propose a finite element numerical
scheme in which we use a splitting technique obtained by introducing an auxiliary
variable given by the gradient of the chemical concentration and applying an inductive strategy, in order to deal with the chemoattraction terms in the two-species equations and
prove optimal error estimates. For this scheme, we study the well-posedness and derive some uniform estimates for the discrete variables required in the convergence analysis. Finally, we present some numerical simulations oriented to verify the good behavior of our scheme, as well as to check numerically the optimal error estimates proved in our theoretical analysis.
\vspace{0.3cm}

\noindent{\bf Keywords.} Interspecies competition, chemoattraction-consumption, Navier-Stokes system, weak and regular solutions, finite elements, optimal error estimates \vspace{0.3cm}

\noindent{\bf AMS subject classifications.} 35Q35, 35K51, 35A01, 65M12, 65M15,   65M60, 92C17.
\end{abstract}


\section{Introduction}
This paper focuses on the numerical analysis of a parabolic system of partial differential equations describing the evolution of two competing species which react on a  chemoattractant in a liquid surrounding environment. This system is given by the following initial-boundary value problem:
\begin{equation}\label{KNS}
\left\{
\begin{array}{lc}
\partial_t n + \mathbf{u}\cdot\nabla n=D_n \Delta n - \chi_1\nabla\cdot(n\nabla c)+\mu_1 n(1-n-a_1 w)  \ \  \mbox{ in } \Omega\times (0,T),\\ 
\partial_t w + \mathbf{u}\cdot\nabla w=D_w \Delta w - \chi_2\nabla\cdot(w\nabla c)+\mu_2 w(1-a_2 n-w)  \ \  \mbox{ in } \Omega\times (0,T),\\ 
\partial_t c +\mathbf{u}\cdot\nabla c= D_c \Delta c-(\alpha n+\beta w)c  \ \  \mbox{ in } \Omega\times (0,T),\\
\partial_t \mathbf{u}+k(\mathbf{u}\cdot\nabla)\mathbf{u}=D_{\mathbf{u}}\Delta \mathbf{u}+\nabla\pi+(\gamma n+\lambda w)\nabla\phi  \ \  \mbox{ in } \Omega\times (0,T),\\
\nabla\cdot \mathbf{u}=0  \ \  \mbox{ in } \Omega\times (0,T),\\
\displaystyle\frac{\partial n}{\partial {\boldsymbol \nu}} =\frac{\partial w}{\partial {\boldsymbol\nu}}=\frac{\partial c}{\partial {\boldsymbol\nu}}=0,\phantom{h} \mathbf{u}=0, \ \ \mbox{ on } \partial\Omega\times (0,T),\\[.2cm]
\left[n(\x,0), w(\x,0), c(\x,0),\mathbf{u}(\x,0)\right]=\left[n_{0}(\x), w_{0}(\x), c_0(\x
),\mathbf{u}_{0}(\x)\right]  \ \  \mbox{ in } \Omega,
\end{array}
\right.
\end{equation}
where $\Omega \subseteq \mathbb{R}^d$, $d=2,3$, is a bounded domain, $T>0$ and $\boldsymbol{\nu}$ denotes the unit outward normal vector to the boundary $\partial \Omega$. The unknowns are $n,w$ denoting the densities of the two species, $c$ standing the chemical concentration and ${\bf u},\pi$ representing the velocity and the pressure of the fluid, respectively. System (\ref{KNS}) is a model of Lotka-Volterra type in which the movement of the two species is directed by the gradient of an attractant chemical substance, with corresponding chemoattraction coefficients $\chi_1,\chi_2>0.$  The dynamics includes  interspecies competition with population growth rates $\mu_1,\mu_2>0,$ and strengths of competition $a_1,a_2>0.$ The chemical substance is consumed by the species with consumption rates $\alpha,\beta>0.$ Finally, the species and the chemical substance are transported by an incompressible fluid whose dynamics is modeled by the Navier-Stokes equations under the influence of the forcing term $(\gamma n+\lambda w) \nabla \phi.$\\

System (\ref{KNS}) was proposed in \cite{HK} as a generalization of the chemotaxis-Navier-Stokes model in which the complex interaction between  chemotaxis, the Lotka-Volterra competitive kinetics of two species and a fluid are taken into account. The seminal chemotaxis-Navier-Stokes model was proposed in \cite{tuval2005bacterial} to describe the interactions between a cell population and a chemical signal with a liquid environment. Indeed, it was observed that when bacteria of the species Bacillus subtilis are suspended in water, some spatial patterns may spontaneously emerge from initially almost homogeneous distributions of bacteria \cite{dombrowski2004self, tTyson,winkler2012global}. \\

From a theoretical point of view, model (\ref{KNS}) has been studied in two and three dimensional settings (e.g. \cite{CaoK,HK,HYJ,Zheng}). Global existence, boundedness and stabilization of solutions in the two-dimensional case were studied in \cite{HK}; while, the three-dimensional case with $k=0$ (that is, considering the Stokes system instead Navier-Stokes model), was studied in \cite{CaoK}. In \cite{HYJ}, the authors analyzed the  convergence and derive explicit rates of convergence for any supposedly given global bounded classical solution in $d$-dimensional domains ($d=2,3$) for different choices of parameters $a_i$ ($i=1,2$). The global existence and eventual smoothness of weak solutions in the three-dimensional case (with $k\neq 0$) were studied in \cite{Zheng}.  More exactly, for initial data satisfying 
\begin{eqnarray}\label{id}
0<n_0,w_0\in C(\bar{\Omega}), \: \ 0<c_0\in W^{1,q}(\Omega),\: \ {\bf u}_0\in D(A^\theta),\: \ \phi\in C^{1+\eta}(\bar{\Omega}),
\end{eqnarray}
for some $q>3,$ $\theta\in (\frac{3}{4},1)$ and $\eta\in (0,1)$, there exists a global weak solution $[n,w,c,{\bf u}]$ in the class
\begin{eqnarray*}
n,w \in L^2_{loc}([0,\infty); L^2(\Omega))\cap  L^{4/3}_{loc}([0,\infty); W^{1,4/3}(\Omega)), \ \ c \in L^2_{loc}([0,\infty); H^1(\Omega)), \ \ {\bf u}\in L^2_{loc}([0,\infty); V),
\end{eqnarray*}
satisfying (\ref{KNS}) in a totally variational sense. Moreover, there exist $T>0$ and $\gamma\in(0,1)$ such that the weak solution satisfies
$
[n,w,c,{\bf u}]\in C^{2+\frac{\gamma}{2}}([T,\infty); C^{2+\gamma}(\bar{\Omega}))^6.
$
The proof is obtained following the ideas of \cite{lankeit2016long,winkler2016global}. Indeed, the weak solution is obtained 
as the limit of smooth solutions to suitably regularized
problems, where appropriate compactness properties are derived on the basis of some a priori estimates. The existence of classical solutions for the
regularized problems are obtained by applying semigroups theory to ensure local existence and uniqueness
of local smooth solutions, and then, extending such solutions by using a priori estimates.\\

In this paper we deal with the existence of global weak solutions for (\ref{KNS}), as well as regularity properties. Our notion of weak solution is weaker than the one in \cite{Zheng} in the sense that our definition  establishes that the equation for the chemoattractant is satisfied a.e. in $\Omega\times (0,T)$, and the class of initial data (and $\phi$) is larger than the one in  \cite{Zheng} (see Definition \ref{weak} below).
In order to prove the existence of weak solutions, we consider a family of regular solutions for a suitable regularized problem, which differs from that considered in \cite{Zheng} (see system (\ref{KNS_aprox})-(\ref{elip}) below). Indeed, we introduce a decoupling through an auxiliary elliptic problem which allows to gain regularity for the chemical equation as well as to obtain an energy-type inequality after testing and combining conveniently the densities and the concentration equations (see details in Section \ref{SS2}). In connection to the existence of weak solutions, it is worthwhile to remark that the uniqueness of weak solutions in 3D is an open problem; consequently, motivated by the results in \cite{Guillen2,JC-EJ}, the second aim of this paper is to derive a regularity criterion to get global-in-time strong solutions. Our result can generalize those in \cite{JC-EJ}. \\

On the other hand, from a numerical point of view, there are different works focused on the chemotaxis phenomenon (see, for instance, \cite{CHH,GG1,GG2,GG3,GG4,GUO,JV,IBRA,SS} and references therein); there, some numerical schemes have been proposed  by using different methods, analyzing properties of the discrete solutions such as existence, uniqueness, positivity, energy-stability, convergence, mass-conservation, asymptotic behavior, error estimates,  among others. Now, when the interaction with a fluid is considered, as far as we know, the literature related to the numerical analysis of chemotaxis-Navier-Stokes system is scarce, see \cite{Saad,EDA}. For the 2D-Keller-Segel-Stokes model,  in \cite{Saad} it was studied the convergence of a numerical scheme, obtained by the combination of the finite volume method and the nonconforming finite element method; while, for a 3D-chemotaxis-Navier-Stokes system, the convergence of a finite element scheme has been analyzed in \cite{EDA}. However, up to our knowledge, for the model (\ref{KNS}) in which, the diffusion by chemotaxis, the Lotka-Volterra kinetics and the fluid interaction  are combined, there are no works dealing with  numerical analysis.\\

Taking into account the above, the third aim of this paper is to develop  the numerical analysis  of model (\ref{KNS}). It is worthwhile to remark that cross-diffusion mechanisms governing the chemotactic phenomena and the non-linear dynamics caused by their cuopling with the interaction between competing species and the fluid, made the model (\ref{KNS}) numerically challenging. Here, we propose a finite element numerical scheme in which, in order to treat with the chemoattraction terms in the two-species equations and the other nonlinear terms coming from the interspecies competition and the signal consumption, we use a splitting technique obtained by introducing an auxiliary variable given by the ${\bf s}=\nabla c$ and applying an inductive strategy. This idea allows us to prove optimal error estimates.  Likewise, we will use some skew-symmetric trilinear forms (see (\ref{a6}) and (\ref{a6aa})) to control the transport terms, preserving the alternance property as in the continuous case.   \\

The layout of this paper is as follows: In Section \ref{SS2}, we introduce some basic notations, preliminary results, and establish the main results corresponding to the continuous problem. Specifically, we prove the existence of weak solutions for system (\ref{KNS}) and a regularity criterion under which weak solutions of (\ref{KNS}) are also strong solutions.  In Section \ref{SS3}, we define an equivalent  weak formulation of (\ref{KNS}), from which we construct the finite element numerical approximation. We prove its well-posedness and some uniform estimates for any discrete solution, which are required in the convergence analysis.  In Section \ref{ESWN}, we carriet out the corresponding convergence analysis proving optimal error estimates in time and space. Finally, in Section \ref{SS5NS}, we provide some numerical simulations in agreement with the theoretical results reported in this paper and the reference \cite{HYJ}.

\section{The continuous problem}\label{SS2}
We will start by recalling some basic notations and preliminary results to be used later. Hereafter, $\Omega$ is a bounded domain of $\mathbb{R}^3$ with boundary of class $C^{2,1}$. The reason to assume this particular regularity comes from the parabolic and elliptic regularity results which we will use frequently throughout this paper (cf. Theorem \ref{Feiresl} and system \eqref{elip} below). We consider the standard Sobolev and Lebesgue spaces $W^{k,p}(\Omega)$ and $L^p(\Omega),$ with respective norms $\Vert \cdot\Vert_{W^{k,p}}$ and $\Vert \cdot\Vert_{L^p}.$ In particular, we denote $W^{k,2}(\Omega)=H^k(\Omega).$ Also, $W_0^{1,p}(\Omega)$ denotes the elements of $W^{1,p}(\Omega)$ with trace zero on $\partial \Omega.$  The $L^2(\Omega)$-inner product will be represented by $(\cdot,\cdot).$ We also consider the  space  $L^2_\sigma(\Omega)^3$ defined by the closure of $\{{\bf u}\in C^\infty_0(\Omega)^3,\ \nabla \cdot {\bf u}=0\}$ in $L^2(\Omega)$ and the  following function spaces
\begin{equation*}
V:=\{\mathbf{v} \in {H}^1_0(\Omega)^3:\nabla\cdot \mathbf{v}=0 \: \text{ in} \: \Omega\},\ \  
{H}^{1}_{\mathbf{s}}(\Omega):=\{\mathbf{w}\in {H}^{1}(\Omega)^3: \mathbf{w}\cdot \boldsymbol{\nu}=0 \mbox{ on } \partial\Omega\},
\end{equation*}
\begin{equation*}
{L}_0^2(\Omega):= \left\{ p \in L^2(\Omega): \int_{\Omega}p =0 \right\}.
\end{equation*}
 We will use the following equivalent norm in  $H_{\bf s}^1(\Omega)$ (see \cite[Corollary 3.5]{Nour}):
\begin{equation}\label{EQs}
\Vert {\bf s} \Vert_{H^1}^2={\Vert {\bf s}\Vert_{L^2}^2 + \Vert \mbox{rot }{\bf s}\Vert_{L^2}^2 + \Vert \nabla \cdot {\bf s}\Vert_{L^2}^2}, \ \ \forall {\bf s}\in H^{1}_{{\bf s}}(\Omega)
\end{equation}
and the well known Poincar\'e and embedding inequalities
\begin{eqnarray}\label{PIa}
\|\mathbf{v}\|_{{H}^1} \leq C_{P}\|\nabla \mathbf{v}\|_{{L}^2}, \ \ \forall \mathbf{v} \in H^1_0(\Omega)^3,\\
\|u\|_{{L}^\infty} \leq C\| u\|_{{H}^2}, \ \ \forall u \in {H}^2(\Omega),\label{ime1}
\end{eqnarray}
for some constants $C_{P},C>0,$ which depend on $\Omega,$ but are independent of $\mathbf{v}$ and $u,$ respectively. Also, we will use the classical 3D interpolation inequalities
\begin{eqnarray}
\Vert u\Vert_{L^3}&\leq &\Vert u\Vert_{L^2}^{1/2}\Vert u\Vert_{L^6}^{1/2} \  \ \forall u\in H^1(\Omega),\label{in3D}\\
\Vert u\Vert_{L^5}&\leq &\Vert u\Vert_{L^2}^{1/10}\Vert u\Vert_{H^1}^{9/10} \  \ \forall u\in H^1(\Omega).\label{in3Dl4}
\end{eqnarray}
We also consider the Stokes operator $A:= - P\Delta,$ with domain
$D(A)=V\cap H^2(\Omega)^3,$ where  $P:L^2(\Omega)^3 \rightarrow L^2_\sigma(\Omega)^3$ stands the Leray projector. Along this paper, we consider a fixed but arbitrary time interval $(0,T),$ $0<T<\infty.$ For $Y$ being a Banach space, we denote by $L^p(Y):=L^p(0,T;Y)$, $1\leq p\leq \infty$, the space of Bochner integrable functions defined on the interval $[0,T]$ with values in $Y$, endowed with the usual norm $\Vert \cdot\Vert_{L^p(Y)}$. We also consider the space $C(Y):=C([0,T];Y)$ of continuous functions from $[0,T]$ into $Y,$ with norm $\Vert \cdot\Vert_{C(Y)}.$ For a Banach space $Y,$ we denote the duality product between the dual $Y^{\prime}$ and $Y$ by $\langle\cdot,\cdot\rangle_{Y^{\prime}}$ or simply $\langle\cdot,\cdot\rangle$ if there is no ambiguity. From now on, for simplicity in the notation, we use the same notation for both scalar and vector valued functions spaces. Also, the letter $C$ will denote different positive constants (independent of discrete parameters) which may change from line to line (or even within the same line).\\

In order to analyze properties of existence and regularity of solutions of (\ref{KNS}), frequently we will use the following parabolic regularity result.

\begin{theo}[\cite{Feireisl}, Theorem 10.22] \label{Feiresl}
				Let $\Omega$ be a bounded domain with boundary $\partial\Omega$ of class $C^2$, $1<p,q<\infty$. Suppose that $f\in L^p(L^q)$, $u_0\in Z_{p,q}=\{L^q(\Omega);\mathcal{D}(\Delta_{\mathcal{N}})\}_{1-1/p,p}$, $\mathcal{D}(\Delta_{\mathcal{N}})=\{v\in W^{2,q}(\Omega):\frac{\partial v}{\partial \nu}=0 \ \text{on} \ \partial\Omega\}$,  where $\{\cdot;\cdot\}_{\cdot,\cdot}$ denotes the real interpolation space. Then the problem
				\begin{equation*}
				\begin{cases}
				\partial_tu-\Delta u=f, &\text{in} \ (0,T)\times\Omega, \\
				u(0,\cdot)=u_0, &\text{in} \ \Omega, \\
				\frac{\partial u}{\partial \nu}=0, &\text{on} \ (0,T)\times\partial\Omega,
				\end{cases}
				\end{equation*}
				admits a unique solution $u$ such that
				$
				u\in C(Z_{p,q})\cap L^p(W^{2,q}), \partial_tu\in L^p(L^q).
				$
Moreover, there exists a positive constant $C:=C(p,q,\Omega,T)$ such that
				\begin{equation*}
				\|u(t)\|_{C(Z_{p,q})}+\|\partial_tu\|_{L^p(L^q)}+\|\Delta u\|_{L^p(L^q)}\le C(\|f\|_{L^p(L^q)}+\|u_0\|_{Z_{p,q}}).
				\end{equation*}
			\end{theo}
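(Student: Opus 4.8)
The plan is to obtain Theorem \ref{Feiresl} as an instance of the abstract theory of maximal $L^p$-regularity for the Neumann Laplacian. Write $\Delta_{\mathcal{N}}$ for the realization of the Laplacian on $L^q(\Omega)$ with domain $\mathcal{D}(\Delta_{\mathcal{N}})=\{v\in W^{2,q}(\Omega):\partial_\nu v=0 \text{ on }\partial\Omega\}$. The first step is to recall that $-\Delta_{\mathcal{N}}$ is a sectorial operator of angle $0$ which generates a bounded analytic semigroup $(e^{t\Delta_{\mathcal{N}}})_{t\ge 0}$ on $L^q(\Omega)$. The hypothesis that $\partial\Omega$ is of class $C^2$ enters precisely here, since it is what guarantees the elliptic $W^{2,q}$-estimate $\|v\|_{W^{2,q}}\le C(\|\Delta v\|_{L^q}+\|v\|_{L^q})$ for $v\in\mathcal{D}(\Delta_{\mathcal{N}})$, and hence the correct identification of the domain.

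The second step is to reduce to the case $u_0=0$. For $u_0\in Z_{p,q}=\{L^q(\Omega);\mathcal{D}(\Delta_{\mathcal{N}})\}_{1-1/p,p}$, the solution of the homogeneous problem, $t\mapsto e^{t\Delta_{\mathcal{N}}}u_0$, belongs to $C([0,T];Z_{p,q})\cap L^p(0,T;W^{2,q})$ with time derivative in $L^p(0,T;L^q)$, and the corresponding norm is controlled by $\|u_0\|_{Z_{p,q}}$; this is exactly the classical trace characterization of the real interpolation space as the set of initial values admissible for maximal regularity. Subtracting this function, one is left with $f\in L^p(0,T;L^q)$ and zero initial data, for which the solution is given by the variation-of-constants formula $u(t)=\int_0^t e^{(t-s)\Delta_{\mathcal{N}}}f(s)\,ds$.

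The heart of the argument is the third step: to prove that the map $f\mapsto\partial_t u = f+\Delta_{\mathcal{N}} u$ is bounded on $L^p(0,T;L^q)$, i.e.\ that $\Delta_{\mathcal{N}}$ has maximal $L^p$-regularity. I would deduce this from the fact that $-\Delta_{\mathcal{N}}$ admits a bounded $H^\infty$-functional calculus on $L^q(\Omega)$ (equivalently, bounded imaginary powers of power angle $<\pi/2$); since $L^q(\Omega)$ is a UMD space for $1<q<\infty$, the Dore--Venni theorem (or, alternatively, Weis's $\mathcal{R}$-boundedness characterization of maximal regularity together with the $\mathcal{R}$-sectoriality of $-\Delta_{\mathcal{N}}$) then yields the estimate $\|\partial_t u\|_{L^p(L^q)}+\|\Delta u\|_{L^p(L^q)}\le C\|f\|_{L^p(L^q)}$. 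The continuity $u\in C([0,T];Z_{p,q})$ follows from the trace/mixed-derivative embedding for the maximal-regularity space $W^{1,p}(0,T;L^q)\cap L^p(0,T;W^{2,q})$, uniqueness is immediate from the semigroup representation (or a short energy identity), and the dependence of the constant on $p,q,\Omega,T$ is tracked through the above estimates.

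I expect the main obstacle to be establishing the bounded $H^\infty$-calculus (equivalently, the $\mathcal{R}$-sectoriality) of the Neumann Laplacian on a merely $C^2$ domain: this is the step requiring a localization/perturbation argument that reduces matters to the half-space, where one invokes Mikhlin-type operator-valued multiplier theory for the model problem. Once that ingredient is available, the remaining steps are essentially formal manipulations with the analytic semigroup and standard interpolation-space facts.
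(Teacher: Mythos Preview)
Your sketch is a reasonable outline of the standard maximal $L^p$-regularity argument for the Neumann Laplacian, but there is nothing to compare it against: the paper does not prove this theorem at all. It is stated as a quotation of Theorem~10.22 in \cite{Feireisl} and used as a black box throughout Section~\ref{SS2}. So your proposal is not ``the same'' or ``different'' from the paper's proof---the paper simply cites the result.

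If your goal is to supply an independent proof, the route you describe (sectoriality of $-\Delta_{\mathcal N}$ on $L^q$, reduction to $u_0=0$ via the trace characterization of $Z_{p,q}$, then maximal regularity via bounded $H^\infty$-calculus or $\mathcal R$-sectoriality on the UMD space $L^q$) is the correct modern framework and would succeed. Your identification of the main technical obstacle---establishing the $H^\infty$-calculus for the Neumann Laplacian on a $C^2$ domain via localization to the half-space and operator-valued Mikhlin multipliers---is accurate; this is exactly the nontrivial ingredient, and it is available in the literature (e.g.\ Denk--Hieber--Pr\"uss). But none of this appears in the present paper, which treats Theorem~\ref{Feiresl} purely as an imported tool.
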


If $p=q,$ it holds that
\begin{equation*}
				Z_{p,p}=\widehat{W}^{2-2/p,p}=\begin{cases}
				W^{2-2/p,p}(\Omega),\ \mbox{if}\ p<3, \\
				\left\{ u\in W^{2-2/p,p}(\Omega):\ \frac{\partial u}{\partial \nu}=0\ \mbox{on}\ \partial\Omega\right\}, \ \mbox{if}\ p>3.
				\end{cases}
				\end{equation*}	
We will denote by $\mathcal{X}_p$ the space		
$$\mathcal{X}_p=\left\{ u\in C(Z_{p,p})\cap L^p(W^{2,p}):\partial_tu\in L^p(L^p) \right\}.$$

\subsection{Variational formulation, and existence and regularity results}
We start this subsection by introducing the notion of weak and strong solutions of (\ref{KNS}).

\begin{defi}\label{weak}(\textit{Weak solution of (\ref{KNS})})
Let $0<T<\infty$,  $\nabla\phi\in L^\infty(\Omega)$, $n_0,w_0\in L^2(\Omega)$, {$c_0\in W^{1,q}(\Omega)$, $q>3$}, ${\bf u}_0\in L^2(\Omega)$ and $n_0,w_0,c_0\geq 0$ in $\Omega$. A 
quadruple $[n,w,c,\bf u]$ is said a 
weak solution of (\ref{KNS}) if
\begin{eqnarray*}
n,w\in {L^{{5}/{3}}(L^{{5}/{3}}})\cap L^{5/4}(W^{1,{5}/{4}}),\ \partial_t n, \partial_t w \in L^{10/9}((W^{1,10})'),\\
c\in L^\infty(H^1)\cap L^2(H^2),\ \partial_tc\in L^{10/7}(L^{10/7}),\\
{\bf u}\in L^\infty(L^2)\cap L^2(V),\ \partial_t {\bf u}\in L^{5/3}((W^{1,5/2})'),
\end{eqnarray*}
satisfying the equation (\ref{KNS})$_3$ a.e $(x,t)$ in $\Omega\times (0,T),$ and verifying 
\begin{eqnarray*}
\int_0^T\langle \partial_tn,\varphi_1\rangle+\int_0^T\int_\Omega \left [D_n \nabla n-n{\bf u}-\chi_1 n\nabla c\right]\cdot \nabla\varphi_1=\mu_1 \int_0^T\int_\Omega n(1-n-a_1w)\varphi_1,\\
\int_0^T\langle \partial_tw,\varphi_2\rangle+\int_0^T\int_\Omega \left[D_w \nabla w- w{\bf u}-\chi_2 w\nabla c\right]\cdot \nabla\varphi_2=\mu_2 \int_0^T\int_\Omega w(1-a_2n-w)\varphi_2,\\
\int_0^T\langle \partial_t{{\bf u}},{\boldsymbol \psi}\rangle+\int_0^T\int_\Omega \left[D_{\mathbf{u}}\nabla {{\bf u}}-k\, {{\bf u}}\otimes{{\bf u}}\right]\cdot\nabla{\boldsymbol \psi}=\int_0^T\int_\Omega (\gamma n+\lambda w)\nabla\phi\cdot{\boldsymbol \psi},
\end{eqnarray*}
{for all $\varphi_1,\varphi_2\in L^{10}(W^{1,10})$ and ${\boldsymbol \psi}\in L^{5/2}( W^{1,5/2}).$ }
\end{defi}

\begin{defi}\label{strong}(\textit{Strong solution of  (\ref{KNS})})
Let $\nabla\phi\in L^\infty(\Omega)$, $[n_0,w_0,c_0,{\bf u}_0]\in \widehat{W}^{3/2,4}\times \widehat{W}^{3/2,4}\times \widehat{W}^{3/2,4}\times V$ and $n_0,w_0,c_0\geq 0$ in $\Omega$. A strong solution of (\ref{KNS}) is a quadruple $[n,w,c,{\bf u}]$ with
\begin{eqnarray*}
[n,w,c,{\bf u}]\in \mathcal{X}_4\times\mathcal{X}_4\times \mathcal{X}_4\times \mathcal{X}_2,
\end{eqnarray*}
satisfying \eqref{KNS}$_{6,7}$ and such that \eqref{KNS}$_{1-5}$ holds a.e $(x,t)$ in $\Omega\times (0,T).$ 
\end{defi}

The first aim of this paper is to prove the following theorem of existence of weak solutions.
\begin{theo}\label{teor2} (\textit{Existence of weak solution}) There exists a global weak solution of system (\ref{KNS}), in the sense of Definition \ref{weak}.
\end{theo}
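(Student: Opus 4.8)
The plan is to obtain the weak solution as a limit of smooth solutions to the regularized system (\ref{KNS_aprox})--(\ref{elip}) announced in the introduction, following the Galerkin/compactness strategy but exploiting the particular decoupling via the auxiliary elliptic problem. First I would establish well-posedness and global existence of the regularized solutions: the decoupling replaces $\nabla c$ in the density equations by the solution of an auxiliary elliptic problem, which regularizes the chemoattraction term, so that local existence follows from a fixed-point argument (or semigroup theory as in \cite{Zheng}), and global existence follows once the a priori estimates below are in hand. Positivity of $n,w,c$ for the regularized problem must be checked here, since it is used repeatedly afterwards.

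The heart of the argument is the derivation of $\varepsilon$-uniform a priori estimates. I would proceed in the following order. (i) Mass-type control: integrating the $n$- and $w$-equations and using the logistic structure $\mu_1 n(1-n-a_1w)$, $\mu_2 w(1-a_2 n-w)$ together with positivity gives $L^\infty(L^1)$ bounds on $n,w$ and, after using the quadratic absorption terms $-\mu_1\int n^2$, $-\mu_2\int w^2$, an $L^2(L^2)$ bound. (ii) Energy estimate for the coupled density--concentration block: testing the $c$-equation appropriately (using $D_c\Delta c-(\alpha n+\beta w)c$ and the maximum principle bound $\|c\|_{L^\infty}\le\|c_0\|_{L^\infty}$), and combining with the density equations tested by suitable functions — this is precisely where the elliptic decoupling is designed to produce a closed energy-type inequality — yields $c\in L^\infty(H^1)\cap L^2(H^2)$ and gradient bounds on the densities in $L^{5/4}(W^{1,5/4})$ (the fractional exponents in Definition \ref{weak} come from interpolating the $L^2(L^2)$ and $L^\infty(L^1)$ density bounds with the diffusion estimate). (iii) Energy estimate for the fluid: testing (\ref{KNS})$_4$ by $\mathbf u$, the convective term vanishes by skew-symmetry, and the forcing $(\gamma n+\lambda w)\nabla\phi$ is controlled by $\|\nabla\phi\|_{L^\infty}\|n+w\|_{L^2}\|\mathbf u\|_{L^2}$, giving $\mathbf u\in L^\infty(L^2)\cap L^2(V)$. (iv) Time-derivative bounds: reading each equation as an identity in the appropriate dual space and estimating term by term yields $\partial_t n,\partial_t w\in L^{10/9}((W^{1,10})')$, $\partial_t c\in L^{10/7}(L^{10/7})$, $\partial_t\mathbf u\in L^{5/3}((W^{1,5/2})')$; in particular, using parabolic regularity (Theorem \ref{Feiresl}) for the $c$-equation with right-hand side $-\mathbf u\cdot\nabla c-(\alpha n+\beta w)c\in L^{10/7}(L^{10/7})$ upgrades $c$ so that (\ref{KNS})$_3$ holds a.e.

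Finally I would pass to the limit $\varepsilon\to 0$. The uniform bounds plus Aubin--Lions--Simon give strong convergence of $n^\varepsilon,w^\varepsilon$ in $L^{5/3}(L^{5/3})$ (after extracting a subsequence) and of $c^\varepsilon$ in, say, $L^2(H^1)$, and weak/weak-$*$ convergence of all terms in their natural spaces; strong convergence of $\mathbf u^\varepsilon$ in $L^2(L^2)$ handles the transport and Navier--Stokes nonlinearities, while strong convergence of the densities and the $L^\infty$-bound on $c^\varepsilon$ handle the chemotaxis products $n^\varepsilon\nabla c^\varepsilon$, the consumption term $(\alpha n^\varepsilon+\beta w^\varepsilon)c^\varepsilon$, and the Lotka--Volterra terms. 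One must also verify that the auxiliary elliptic correction converges to $\nabla c$, so that the regularized density equations converge to the stated weak formulation. The main obstacle is step (ii): closing the energy inequality for the density--concentration coupling uniformly in $\varepsilon$ in the three-dimensional setting, since the chemotaxis terms $\chi_i\nabla\cdot(n^\varepsilon\nabla c^\varepsilon)$ are supercritical for the bare $L^\infty(L^1)\cap L^2(L^2)$ density regularity — this is exactly what the elliptic decoupling and the entropy-type combination of the equations are introduced to overcome, and getting the functional-analytic exponents to balance (hence the appearance of $5/3$, $5/4$, $10/9$, $10/7$, $5/2$ rather than cleaner numbers) is the delicate bookkeeping.
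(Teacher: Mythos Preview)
Your overall strategy matches the paper's: regularize via (\ref{KNS_aprox})--(\ref{elip}), derive $\varepsilon$-uniform estimates through an entropy-type functional, and pass to the limit by compactness. You also correctly flag step (ii) as the crux. Two points need sharpening, however.

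First, the entropy functional you allude to but do not name is
\[
\int_\Omega n_\epsilon\ln n_\epsilon + \int_\Omega w_\epsilon\ln w_\epsilon + C\int_\Omega \frac{|\nabla s_\epsilon|^2}{s_\epsilon} + C\int_\Omega |{\bf u}_\epsilon|^2,
\]
and the cancellation mechanism is specific: testing the $n_\epsilon$-equation by $\ln n_\epsilon$ yields the dissipation $\int |\nabla n_\epsilon|^2/n_\epsilon$ together with the bad cross term $\chi_1\int \frac{\nabla n_\epsilon\cdot\nabla c_\epsilon}{1+\epsilon n_\epsilon}$; this is cancelled not by testing the $c$-equation as you suggest, but by differentiating $\int |\nabla s_\epsilon|^2/s_\epsilon$ in time (a Winkler--Lankeit computation, \cite[Lemma 2.8]{lankeit2016long}), which produces $-2\alpha\int\frac{\nabla c_\epsilon\cdot\nabla n_\epsilon}{1+\epsilon n_\epsilon}$ of the opposite sign. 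The $L^\infty$ bound you invoke is for the auxiliary variable $s_\epsilon$ (Lemma \ref{lemma1}), not for $c_\epsilon$, and is what makes the weighted functional $|\nabla s_\epsilon|^2/s_\epsilon$ tractable. The $L^{5/3}(L^{5/3})$ and $L^{5/4}(W^{1,5/4})$ bounds on the densities then follow from $\sqrt{n_\epsilon}\in L^\infty(L^2)\cap L^2(H^1)$ and $\nabla n_\epsilon=2\sqrt{n_\epsilon}\,\nabla\sqrt{n_\epsilon}$, not from interpolating an $L^2(L^2)$ bound against $L^\infty(L^1)$.

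Second, Aubin--Lions gives strong convergence of $n_\epsilon,w_\epsilon$ only in $L^p(L^p)$ for $p<5/3$, not at the endpoint, so your claimed strong $L^{5/3}$ convergence is an overstatement. This matters for the quadratic Lotka--Volterra terms $n_\epsilon^2$, $n_\epsilon w_\epsilon$: the paper handles them by a uniform-integrability (Dunford--Pettis) argument, citing \cite[Proposition 2.1]{lankeit2016long}, to obtain convergence in $L^1(L^1)$.
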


Under the extra assumption $[n,w,{\bf u}]\in L^{20/7}(L^{20/7})\times L^{20/7}(L^{20/7})\times L^8(L^4)$, weak solutions are strong in the sense of the following definition. This is the content of the next theorem.
\begin{theo}{\label{teor3} (\textit{Regularity criterion})}
{Let $[n,w,c,{\bf u}]$ be a weak solution of \eqref{KNS}. If $[n_0,w_0,c_0,{\bf u}_0]\in \widehat{W}^{3/2,4}\times\widehat{W}^{3/2,4}\times \widehat{W}^{3/2,4}\times V$ and
\begin{equation}
\label{reg1}
[n,w,{\bf u}]\in L^{20/7}(L^{20/7})\times L^{20/7}(L^{20/7})\times L^8(L^4),
\end{equation}
then $[n,w,c,{\bf u}]$ is a strong solution of \eqref{KNS}  in the sense of Definition \ref{strong}. }
\end{theo}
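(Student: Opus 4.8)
The plan is a bootstrap. Starting from a weak solution $[n,w,c,\mathbf u]$ and the extra integrability \eqref{reg1}, one successively upgrades the regularity of $\mathbf u$, then $c$, then $(n,w)$, by iterated use of the maximal regularity estimate of Theorem~\ref{Feiresl}, classical Stokes elliptic regularity for $\mathbf u$, the three-dimensional embedding and interpolation inequalities \eqref{ime1}--\eqref{in3Dl4}, and the divergence form of the transport and chemoattraction terms. The hypotheses in \eqref{reg1} — the exponent $20/7$ for $n,w$ and the Ladyzhenskaya--Prodi--Serrin exponent $8$ for $\mathbf u$ ($\tfrac28+\tfrac34=1$) — are exactly what is needed to start the iteration, while the choice $\widehat W^{3/2,4}=Z_{4,4}$ for $n_0,w_0,c_0$ (with $\mathbf u_0\in V$) is exactly what Theorem~\ref{Feiresl} needs to close it in $\mathcal X_4\times\mathcal X_4\times\mathcal X_4\times\mathcal X_2$.

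I would begin with two preliminary steps. First, the velocity: freezing $\mathbf u$ in the convective term turns \eqref{KNS}$_{4,5}$ into a linear Stokes problem with forcing $-k(\mathbf u\cdot\nabla)\mathbf u+(\gamma n+\lambda w)\nabla\phi$; since $n,w\in L^{20/7}(L^{20/7})\hookrightarrow L^2(L^2)$ and $\nabla\phi\in L^\infty$, and since $\mathbf u\in L^8(L^4)$ is a Serrin class, pairing the equation with $A\mathbf u$, using $\|(\mathbf u\cdot\nabla)\mathbf u\|_{L^2}\le\|\mathbf u\|_{L^4}\|\nabla\mathbf u\|_{L^4}$ together with $\|\nabla\mathbf u\|_{L^4}\le C\|\nabla\mathbf u\|_{L^2}^{1/4}\|A\mathbf u\|_{L^2}^{3/4}$ to absorb the top-order term, and Gronwall's inequality with the $L^1(0,T)$ weight $\|\mathbf u\|_{L^4}^8$, yields $\mathbf u\in L^\infty(H^1)\cap L^2(D(A))$ and $\partial_t\mathbf u\in L^2(L^2)$, i.e. $\mathbf u\in\mathcal X_2$; by interpolation $\mathbf u\in L^{10}(L^{10})\cap L^\infty(L^6)$. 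Second, an $L^\infty$ bound for $c$: since $\mathbf u$ and $\alpha n+\beta w$ now lie in subcritical Lebesgue classes (e.g. $\tfrac{2}{20/7}+\tfrac{3}{20/7}=\tfrac74<2$), a De Giorgi/Moser argument applied to \eqref{KNS}$_3$ gives $c\in L^\infty(L^\infty)$ (for nonnegative weak solutions this is simply $0\le c\le\|c_0\|_{L^\infty}$ by the maximum principle).

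The heart is a finite iteration alternating the chemical and the species equations, in which the order matters. (i) Apply Theorem~\ref{Feiresl} to \eqref{KNS}$_3$ with $f_c=-\mathbf u\cdot\nabla c-(\alpha n+\beta w)c$: from $\nabla c\in L^\infty(L^2)\cap L^2(L^6)\hookrightarrow L^{10/3}(L^{10/3})$, $\mathbf u\in L^{10}(L^{10})$, the $L^\infty$ bound for $c$ and $n,w\in L^{20/7}(L^{20/7})$ one gets $f_c\in L^{20/7}(L^{20/7})$ (via one intermediate pass through $\mathcal X_{5/2}$ to improve $\nabla c$), hence $c\in\mathcal X_{20/7}$; in particular $\Delta c\in L^{20/7}(L^{20/7})$, which is subcritical for the species equations. (ii) Writing $\mathbf u\cdot\nabla n+\chi_1\nabla\cdot(n\nabla c)=\nabla\cdot\big(n(\mathbf u+\chi_1\nabla c)\big)$ and using $W^{-1,q}$-type maximal regularity — or, equivalently, a Moser-type iteration testing \eqref{KNS}$_{1,2}$ against powers of $n,w$ and handling $\int n^q\Delta c$ through the $c$-equation — together with $n,w\in L^{20/7}(L^{20/7})$ and $\mathbf u\in L^{10}(L^{10})$, one raises $n,w$ into $\mathcal X_p$ for some $p>5/2$; since then $Z_{p,p}=W^{2-2/p,p}\hookrightarrow L^\infty(\Omega)$, this gives $n,w\in C(L^\infty)\subset L^\infty(L^\infty)$. (iii) With $n,w$ bounded, $(\alpha n+\beta w)c\in L^\infty(L^\infty)$; interpolating the improved $\nabla c$ against $\nabla c\in L^\infty(L^2)$ one makes $f_c\in L^4(L^4)$ (using $\mathbf u\in L^\infty(L^6)$), so Theorem~\ref{Feiresl} with $p=q=4$ and $c_0\in Z_{4,4}$ gives $c\in\mathcal X_4$, and a further pass even yields $c\in\mathcal X_r$ for some $r>5$, hence $\nabla c\in L^\infty(L^\infty)$. (iv) Finally, in $f_n=-\mathbf u\cdot\nabla n-\chi_1(\nabla n\cdot\nabla c+n\Delta c)+\mu_1 n(1-n-a_1w)$ (and in $f_w$) the terms $n\Delta c$ and $\mu_1 n(1-n-a_1w)$ lie in $L^4(L^4)$ because $n,w,\Delta c$ do, while $\nabla n\cdot\nabla c$ is bounded by $\|\nabla c\|_{L^\infty(L^\infty)}\|\nabla n\|_{L^4(L^4)}$ with $\|\nabla n\|_{L^4}\le\varepsilon\|D^2n\|_{L^4}+C_\varepsilon\|n\|_{L^4}$, so that choosing $\varepsilon$ small absorbs this term into the left-hand side of the maximal regularity estimate; with $n_0,w_0\in Z_{4,4}$ one concludes $n,w\in\mathcal X_4$. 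Collecting (i)--(iv), $[n,w,c,\mathbf u]\in\mathcal X_4\times\mathcal X_4\times\mathcal X_4\times\mathcal X_2$ with the equations holding a.e., which is Definition~\ref{strong}.

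The main difficulty is closing this coupled loop with the correct ordering. The chemoattraction term $\nabla\cdot(n\nabla c)=\nabla n\cdot\nabla c+n\Delta c$ requires $c$ to be quite regular ($\Delta c\in L^4(L^4)$, and $\nabla c\in L^\infty(L^\infty)$ for the absorption in step (iv)), yet the source of the $c$-equation only reaches $L^4(L^4)$ once $n,w$ are bounded, and moreover the regularity of $c$ is itself capped by that of $\mathbf u\in\mathcal X_2$ through the transport term $\mathbf u\cdot\nabla c$; so $c$ cannot be pushed far before $n,w$ have been controlled. This forces the order above — first $\mathbf u\in\mathcal X_2$ and a subcritical $\Delta c$, then $L^\infty$-boundedness of $n,w$, and only afterwards the final upgrade of $c$ and of $n,w$ — and the delicate part is checking that the exponents $20/7$ and $8$ in \eqref{reg1} are exactly large enough for every Hölder and interpolation estimate in the chain (that $f_c$ reaches $L^{20/7}(L^{20/7})$, that $\Delta c$ lands below the critical threshold for the Moser iteration, and so on). A secondary, technical point is that the $A\mathbf u$-testing and the maximal regularity applications must be legitimized for a merely weak solution; this is done by freezing the lower-order coefficients and invoking uniqueness of the resulting linear parabolic/Stokes problems — so that the weak solution inherits their maximal regularity — or by running the estimates on the regularized system of Section~\ref{SS2}.
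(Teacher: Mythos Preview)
Your overall strategy---bootstrap via maximal parabolic regularity (Theorem~\ref{Feiresl}) for $n,w,c$ and the Serrin argument for $\mathbf u$---is exactly the paper's approach, and your ordering (first $\mathbf u\in\mathcal X_2$, then iterate between $c$ and $(n,w)$) is correct. Where you and the paper differ is in the \emph{execution} of the $(n,w)$-upgrade. The paper does not use a Moser iteration or $W^{-1,q}$ maximal regularity to jump directly to $\mathcal X_p$, $p>5/2$; instead it runs a long explicit chain of small increments $\mathcal X_{20/19}\to\mathcal X_{10/9}\to\cdots\to\mathcal X_{20/13}\to L^\infty(L^2)\cap L^2(H^1)$ in one stage, and then $\mathcal X_{5/3}\to\mathcal X_{20/11}\to\mathcal X_2\to\mathcal X_{5/2}\to\mathcal X_{20/7}\to\mathcal X_4$ in a second, alternating with $c\in\mathcal X_{10/3}\to\mathcal X_4$. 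This incremental approach is elementary (pure H\"older/embedding bookkeeping on the right-hand side of Theorem~\ref{Feiresl}) but long; it never needs $\nabla c\in L^\infty(L^\infty)$ or an absorption argument, because at each step the product $\nabla n\cdot\nabla c$ is bounded directly in the current $L^p(L^p)$ by interpolating the regularity already obtained.

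Your step~(ii) is the place to be careful. A Moser iteration testing against $n^{q-1}$ can yield $n,w\in L^\infty(L^p)$ for all $p$ (and the $\int n^q\Delta c$ trick you mention is standard), but that is not the same as $n,w\in\mathcal X_p$: to apply Theorem~\ref{Feiresl} at level $p$ you still need $\mathbf u\cdot\nabla n+\chi_1\nabla n\cdot\nabla c\in L^p(L^p)$, which requires gradient integrability that Moser alone does not supply. Likewise, the $W^{-1,q}$ route is limited by the quadratic reaction terms $n^2,nw\in L^{10/7}(L^{10/7})$, which cap $q$ well below what is needed for $Z_{p,p}\hookrightarrow L^\infty$. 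So either you must still run several $\mathcal X_p$ passes (as the paper does) after your Moser bound, or spell out carefully how the combination closes in one go. Your steps~(iii)--(iv), by contrast, are a genuine and clean alternative to the paper's final chain: once $n,w\in L^\infty(L^\infty)$ one can indeed push $c$ to $\mathcal X_r$, $r>5$ (since $\mathbf u\cdot\nabla c\in L^{60/11}(L^{60/11})$ from $\mathbf u\in L^{10}(L^{10})$ and $\nabla c\in L^\infty(L^{12})$), obtain $\nabla c\in L^\infty(L^\infty)$, and then treat $\chi_1\nabla c\cdot\nabla$ as a bounded first-order perturbation of $-D_n\Delta$, for which maximal $L^4$-regularity still holds---this shortcut replaces the paper's last five increments by a single step.
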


\subsection{Existence of weak solutions. Proof of Theorem \ref{teor2}}
We look for a weak solution of (\ref{KNS}) as the limit of the strong solutions for
a family of suitable regularized problems.  In this section, for simplicity in the calculations, we consider the parameter $k$ and the self-diffusion coefficients $D_{\bf u}$, $D_n$, $D_w$ and $D_c$ equal to 1.

\subsubsection{Approximated solutions}
 We consider a regularized problem, depending of a parameter $\epsilon$, and then, we obtain the existence of weak solutions as the limit, as the parameter $\epsilon$ goes to zero, of the corresponding  strong solutions. Precisely, given $\epsilon>0,$ we consider the regularized system
{\begin{equation}\label{KNS_aprox}
\left\{
\begin{array}{lc}
\partial_t n_{\epsilon}+{\bf u}_\epsilon \cdot\nabla n_\epsilon=\Delta n_\epsilon-\chi_1 \nabla\cdot\left(\frac{n_\epsilon^+}{1+\epsilon n_\epsilon^+}\nabla c_\epsilon\right)+\mu_1n_\epsilon^+(1-n_\epsilon-a_1w_\epsilon)\ \ \mbox{in}\ \Omega\times(0,T),&\\
\partial_t w_{\epsilon}+{\bf u}_\epsilon \cdot\nabla w_\epsilon=\Delta w_\epsilon-\chi_2 \nabla\cdot\left(\frac{w_\epsilon^+}{1+\epsilon w_\epsilon^+}\nabla c_\epsilon\right)+\mu_2w_\epsilon^+(1-a_2n_\epsilon-w_\epsilon)\ \ \mbox{in}\ \Omega\times(0,T),&\\
 \partial_t s_{\epsilon}+{\bf u}_\epsilon \cdot\nabla s_\epsilon=\Delta s_\epsilon- \alpha s^+_\epsilon\frac{1}{\epsilon}\ln (1+\epsilon n_\epsilon^+)- \beta s^+_\epsilon\frac{1}{\epsilon}\ln (1+\epsilon w_\epsilon^+) \ \ \ \ \ \ \mbox{in}\ \Omega\times(0,T),  &\\
\partial_t {\bf u}_{\epsilon}+(Y_\epsilon{\bf u}_\epsilon\cdot\nabla){\bf u}_\epsilon= \Delta {\bf u}_\epsilon+\nabla\pi_\epsilon+(\gamma n_\epsilon+\lambda w_\epsilon) \nabla \phi\ \ \ \ \mbox{in}\ \Omega\times(0,T), &\\
\nabla\cdot {\bf u}_\epsilon=0\ \ \ \ \mbox{in}\ \Omega\times(0,T), &\\
\left[n_\epsilon(0),w_\epsilon(0),s_\epsilon(0),{\bf u}_\epsilon(0)\right]=\left[n_{0,\epsilon},w_{0,\epsilon},s_{0,\epsilon},{\bf u}_{0,\epsilon}\right]\ \ \ \ \mbox{in}\ \Omega,\\[.1cm]
\frac{\partial n_\epsilon(x,t)}{\partial \nu}=\frac{\partial w_\epsilon(x,t)}{\partial \nu}=\frac{\partial s_\epsilon(x,t)}{\partial \nu}=0, \quad {\bf u}_\epsilon(x,t)=0\ \ \ \ \ \ \mbox{on}\ \partial\Omega\times(0,T),
\end{array}
\right.
\end{equation}}
where $c_\epsilon$ is the unique solution of the elliptic problem
\begin{equation}\label{elip}
\left\{
\begin{array}{lc}
c_\epsilon-\epsilon\Delta c_\epsilon=s_\epsilon\ \mbox{in}\ \Omega,\\
\frac{\partial c_\epsilon(x,t)}{\partial \nu}=0\ \mbox{on}\ \partial\Omega,
\end{array}
\right.
\end{equation}
and $Y_\epsilon=(1+\epsilon A)^{-1}$ is the Yosida approximation \cite{Sohr}. In (\ref{KNS_aprox}), $a^+_\epsilon=\mbox{max}\{a_\epsilon,0\}\geq 0$ for a general variable $a$, and $n_{0,\epsilon},c_{0,\epsilon}, s_{0,\epsilon},{\bf u}_{0,\epsilon}$ are initial data satisfying
\begin{equation}\label{elip1}
\left\{
\begin{array}{lc}
n_{0,\epsilon}\in W^{4/5,5/3}(\Omega),\ n_{0,\epsilon}>0,\ \ n_{0,\epsilon}\rightarrow n_0\ \mbox{in}\ L^2(\Omega),\ \mbox{as}\ \epsilon\rightarrow 0,\\
w_{0,\epsilon}\in W^{4/5,5/3}(\Omega),\ w_{0,\epsilon}>0,\ \ w_{0,\epsilon}\rightarrow w_0\ \mbox{in}\ L^2(\Omega),\ \mbox{as}\ \epsilon\rightarrow 0,\\
s_{0,\epsilon}\geq 0, \ s_{0,\epsilon}=c_{0,\epsilon}-\epsilon\Delta c_{0,\epsilon}\in {W}^{1,q}(\Omega),\ q>3,\\ 
c_{0,\epsilon}\geq0, \ c_{0,\epsilon}\rightarrow c_0\ \mbox{in}\ H^1(\Omega),\ \mbox{as}\ \epsilon\rightarrow 0,\\
{s_{0,\epsilon}\rightarrow c_0\ \mbox{in}\ W^{1,q}(\Omega),\ \mbox{as}\ \epsilon\rightarrow 0,}\\
{\bf u}_{0,\epsilon}\in V,\ {\bf u}_{0,\epsilon}\rightarrow {\bf u}_0\ \mbox{in}\ L^2(\Omega),\ \mbox{as}\ \epsilon\rightarrow 0.
\end{array}
\right.
\end{equation}

Next proposition provides the existence of strong solutions for (\ref{KNS_aprox})-(\ref{elip}).
\begin{prop}\label{p1}
For each $\epsilon>0,$ there exist unique non-negative functions $n_{\epsilon}\in \mathcal{X}_{5/3}$, $w_\epsilon\in \mathcal{X}_{5/3}$, $s_\epsilon\in \mathcal{X}_{5/3}$, ${\bf u}_\epsilon\in \mathcal{X}_2$, which, together with some $\pi_\epsilon\in L^2(H^1)$ solve the system (\ref{KNS_aprox}) pointwisely a.e. $(x,t)\in \Omega\times (0,T).$
\end{prop}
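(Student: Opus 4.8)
The plan is to obtain $(n_\epsilon,w_\epsilon,s_\epsilon,{\bf u}_\epsilon)$ as a fixed point of a solution operator built by freezing the nonlinearities, thus producing a local-in-time solution, and then to extend it to all of $[0,T]$ by means of $\epsilon$-dependent (but time-interval-independent) a priori estimates. Since the regularized system \eqref{KNS_aprox}--\eqref{elip} has all its nonlinearities tamed — the densities enter the cross-diffusion and reaction/consumption terms only through the bounded quantities $\frac{n_\epsilon^+}{1+\epsilon n_\epsilon^+}\le\frac1\epsilon$ and $\frac1\epsilon\ln(1+\epsilon n_\epsilon^+)\le n_\epsilon^+$, the chemical equation is decoupled from $c_\epsilon$ through the auxiliary elliptic problem \eqref{elip} (whose unique solvability follows from Lax--Milgram), and the convective term of the fluid equation carries the smoothing Yosida factor $Y_\epsilon=(1+\epsilon A)^{-1}$ — the Banach fixed point theorem is applicable and delivers uniqueness at the same time. (A Leray--Schauder argument would also work, at the cost of proving uniqueness separately.)

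Concretely, fix $\epsilon>0$, pick $R$ large, and on a short interval $[0,T^*]$ let $B_R$ be the closed ball of radius $R$ in
$$\mathcal{Z}:=\Big(C([0,T^*];L^{5/3})\cap L^{5/3}(0,T^*;W^{1,5/3})\Big)^3\times\Big(C([0,T^*];L^2)\cap L^2(0,T^*;V)\Big)$$
intersected, for the $s$-component, with a bounded subset of $L^\infty(W^{1,q})$, $q>3$, so as to keep pointwise-in-time control of $c_\epsilon$. Given $(\bar n,\bar w,\bar s,\bar{\bf u})\in B_R$, first solve \eqref{elip} with datum $\bar s$; by elliptic regularity in the $C^{2,1}$ domain $\Omega$, $c_\epsilon=c_\epsilon[\bar s]$ gains two spatial derivatives over $\bar s$, so $\nabla c_\epsilon$ and $\Delta c_\epsilon$ are controlled by the norm of $\bar s$, with an $\epsilon$-dependent constant. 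Then solve the three linear parabolic Neumann problems for $n_\epsilon,w_\epsilon,s_\epsilon$ — transported by $\bar{\bf u}$, with chemotactic drift $\nabla c_\epsilon[\bar s]$ and the zero-order reaction/consumption terms evaluated at $(\bar n,\bar w)$ — and the linear Stokes problem for $({\bf u}_\epsilon,\pi_\epsilon)$ with forcing $(\gamma\bar n+\lambda\bar w)\nabla\phi-(Y_\epsilon\bar{\bf u}\cdot\nabla)\bar{\bf u}$. This defines the map $\mathcal{T}:(\bar n,\bar w,\bar s,\bar{\bf u})\mapsto(n_\epsilon,w_\epsilon,s_\epsilon,{\bf u}_\epsilon)$, and one must check that it maps $B_R$ into itself and is a contraction for $T^*$ small.

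Well-definedness of $\mathcal{T}$ into $\mathcal{X}_{5/3}^3\times\mathcal{X}_2$ follows from the maximal parabolic regularity Theorem \ref{Feiresl} and from standard $L^2$ maximal regularity for the Stokes operator, once each right-hand side is shown to lie in $L^{5/3}(L^{5/3})$, respectively $L^2(L^2)$. This is the computational heart of the argument: using the bounds above, the smoothing of $Y_\epsilon$ (which sends $L^2_\sigma$ into $D(A)$), the elliptic estimates for $c_\epsilon[\bar s]$, Hölder's inequality and the $3$D embeddings $W^{2,5/3}\hookrightarrow L^\infty$, $W^{4/5,5/3}\hookrightarrow L^3$, $W^{1,5/3}\hookrightarrow L^{15/4}$ (so that $\mathcal{X}_{5/3}\hookrightarrow L^\infty(L^3)\cap L^{5/3}(L^\infty)$, and, by parabolic interpolation, the testing estimates give $L^{10/3}(L^{10/3})$-type control), one checks that every quadratic-type product ($n_\epsilon w_\epsilon$, $n_\epsilon^2$, $s_\epsilon n_\epsilon$, $\nabla n_\epsilon\cdot\nabla c_\epsilon$, $\frac{n_\epsilon^+}{1+\epsilon n_\epsilon^+}\Delta c_\epsilon$, $(Y_\epsilon\bar{\bf u}\cdot\nabla)\bar{\bf u}$, \dots) lands in the required space. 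Choosing first $R$ (depending on the data and on $\epsilon$, not on $T^*$) and then $T^*$ small, the gain of a positive power $(T^*)^\theta$ in the estimates of the zero-initial-data parts gives $\mathcal{T}(B_R)\subset B_R$, and running the same estimates on a difference $\mathcal{T}(\cdot)-\mathcal{T}(\cdot)$ — all frozen nonlinearities being, for fixed $\epsilon$, locally Lipschitz in the $\mathcal{Z}$-topology — shows that $\mathcal{T}$ is a contraction. Banach's fixed point theorem then provides the unique local solution on $[0,T^*]$, with $c_\epsilon=c_\epsilon[s_\epsilon]$ and a pressure $\pi_\epsilon\in L^2(H^1)$ recovered via De Rham's lemma, solving \eqref{KNS_aprox}--\eqref{elip} pointwise a.e.

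It remains to prove non-negativity and to globalize. Testing the $n_\epsilon$, $w_\epsilon$, $s_\epsilon$ equations with the negative parts $n_\epsilon^-$, $w_\epsilon^-$, $s_\epsilon^-$, the transport terms vanish ($\nabla\cdot{\bf u}_\epsilon=0$) and the cross-diffusion and reaction/consumption terms either vanish on $\{n_\epsilon<0\}$ (resp. $\{w_\epsilon<0\}$, $\{s_\epsilon<0\}$) or have a favorable sign, so Grönwall's inequality together with $n_{0,\epsilon},w_{0,\epsilon}>0$, $s_{0,\epsilon}\ge0$ forces $n_\epsilon,w_\epsilon,s_\epsilon\ge0$; then the maximum principle for \eqref{elip} yields $c_\epsilon\ge0$. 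With positivity available, $n_\epsilon^+=n_\epsilon$, $w_\epsilon^+=w_\epsilon$, $s_\epsilon^+=s_\epsilon$, the quadratic part of each reaction term is dissipative, and the standard energy procedure — test the $n_\epsilon$, $w_\epsilon$ equations by themselves, the $s_\epsilon$ equation by itself (and, if needed, by $-\Delta s_\epsilon$), the fluid equation by ${\bf u}_\epsilon$, controlling the chemotactic coupling via the $\epsilon$-dependent bounds on $\nabla c_\epsilon$ from \eqref{elip}, and then re-injecting these bounds into Theorem \ref{Feiresl} — yields estimates on the $\mathcal{X}_{5/3}$- and $\mathcal{X}_2$-norms that do not deteriorate in time; a standard continuation/bootstrap argument then extends the solution to all of $[0,T]$. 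I expect the main obstacle to be precisely this bookkeeping: in $d=3$ the exponent $5/3$ is borderline, so the $W^{2,5/3}$-type embeddings and the $\mathcal{X}_{5/3}$-interpolation must be used sharply for every nonlinear term to reach $L^{5/3}(L^{5/3})$, and the elliptic problem \eqref{elip} has to be exploited to endow $c_\epsilon$ with exactly the extra regularity that the cross-diffusion and consumption terms require.
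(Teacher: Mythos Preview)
Your strategy is sound and leads to the same conclusion, but it differs from the paper's in two structural ways. The paper applies the Leray--Schauder fixed point theorem to a map $\Gamma$ acting only on the triple $(\bar n_\epsilon,\bar w_\epsilon,\bar s_\epsilon)$ in the energy space $X=L^\infty(L^2)\cap L^2(H^1)$: given this triple, it first solves the \emph{nonlinear} regularized Navier--Stokes system for ${\bf u}_\epsilon\in\mathcal{X}_2$ (the Yosida factor makes this globally well-posed by \cite{Sohr}), then \eqref{elip} for $\bar c_\epsilon$, and finally the linear parabolic problems for $(n_\epsilon,w_\epsilon,s_\epsilon)$; compactness of $\Gamma$ comes from Aubin--Lions, and the a priori bounds on fixed points of $\bar\alpha\Gamma$ are obtained directly on the full interval $[0,T]$, so existence is global from the outset and only uniqueness is argued separately. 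Your Banach approach instead freezes all four unknowns (including the convection $(Y_\epsilon\bar{\bf u}\cdot\nabla)\bar{\bf u}$), obtains a local solution by contraction, and then continues it via $\epsilon$-dependent energy bounds. The trade-off is clear: Leray--Schauder avoids the contraction bookkeeping and delivers global existence in one shot, while your route gives uniqueness for free and is arguably more modular. One minor point: the extra $L^\infty(W^{1,q})$ constraint you impose on the $s$-component is unnecessary and somewhat awkward to propagate (the transport by $\bar{\bf u}\in C(L^2)\cap L^2(V)$ will not readily keep $s_\epsilon$ in that class); it is cleaner to work, as the paper does, with $\bar s\in L^\infty(L^2)\cap L^2(H^1)$, which already yields $c_\epsilon\in L^\infty(H^2)\cap L^2(H^3)$ via elliptic regularity on \eqref{elip} and hence $\nabla c_\epsilon\in L^{10}(L^{10})$ --- precisely what is needed to place the chemotactic divergence in $L^{5/3}(L^{5/3})$.
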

\begin{proof}
We will use the Leray-Schauder fixed point theorem. Let us consider the Banach space $X=L^\infty(L^2)\cap L^2(H^1)$ and $\Gamma:X\times X\times X\rightarrow X\times X\times X$ defined by $\Gamma([\bar{n}_\epsilon,\bar{w}_\epsilon,\bar{s}_\epsilon])=[{n}_\epsilon,{w}_\epsilon,s_\epsilon],$ where ${n}_\epsilon,{w}_\epsilon,{s}_\epsilon$ are the three first components of the solution $[{n}_\epsilon,{w}_\epsilon, s_\epsilon, {\bf u}_\epsilon,\pi_\epsilon]$ of the following system:
\begin{equation}\label{KNS_lin}
\left\{
\begin{array}{lc}
\partial_t n_{\epsilon}-\Delta n_\epsilon=-{\bf u}_\epsilon \cdot\nabla \bar{n}_\epsilon-\chi_1 \nabla\cdot\left(\frac{ \bar{n}^+_\epsilon}{1+\epsilon \bar{n}_\epsilon^+}\nabla \bar{c}_\epsilon\right)+\mu_1\bar{n}_\epsilon^+(1-\bar{n}_\epsilon-a_1\bar{w}_\epsilon)\ \mbox{in}\ \Omega\times(0,T),&\\
\partial_t w_{\epsilon}-\Delta w_\epsilon=-{\bf u}_\epsilon \cdot\nabla \bar{w}_\epsilon-\chi_2 \nabla\cdot\left(\frac{ \bar{w}^+_\epsilon}{1+\epsilon \bar{w}_\epsilon^+}\nabla \bar{c}_\epsilon\right)+\mu_2\bar{w}_\epsilon^+(1-a_2\bar{n}_\epsilon-\bar{w}_\epsilon)\ \mbox{in}\ \Omega\times(0,T),&\\
\partial_t s_{\epsilon}-\Delta s_\epsilon=-{{\bf u}_\epsilon} \cdot\nabla \bar{s}_\epsilon- \alpha \bar{s}^+_\epsilon \frac{1}{\epsilon}\ln(1+\epsilon\bar{n}_\epsilon^+)- \beta \bar{s}^+_\epsilon \frac{1}{\epsilon}\ln(1+\epsilon\bar{w}_\epsilon^+)\ \mbox{in}\ \Omega\times(0,T),  &\\
\partial_t {\bf u}_{\epsilon}-\Delta {\bf u}_\epsilon+(Y_\epsilon{\bf u}_\epsilon\cdot\nabla){\bf u}_\epsilon-\nabla\pi_\epsilon=(\gamma\bar{n}_\epsilon+\lambda\bar{w}_\epsilon) \nabla \phi\ \mbox{in}\ \Omega\times(0,T), &\\
\nabla\cdot {\bf u}_\epsilon=0\ \mbox{in}\ \Omega\times(0,T), &\\
\left[n_\epsilon(0),w_\epsilon(0),s_\epsilon(0),{\bf u}_\epsilon(0)\right]=\left[n_{0,\epsilon},w_{0,\epsilon},s_{0,\epsilon},{\bf u}_{0,\epsilon}\right]\ \ \ \ \mbox{in}\ \Omega,\\[.3cm]
\frac{\partial n_\epsilon(x,t)}{\partial \nu}=\frac{\partial w_\epsilon(x,t)}{\partial \nu}=\frac{\partial s_\epsilon(x,t)}{\partial \nu}=0, \quad {\bf u}_\epsilon(x,t)=0\ \ \ \ \ \ \mbox{on}\ \partial\Omega\times(0,T),\end{array}
\right.
\end{equation}
with $\bar{c}_\epsilon$ being the unique solution of (\ref{elip}) with right hand side $\bar{s}_\epsilon.$ First of all, $\Gamma$ is well defined. {In fact, if $[\bar{n}_\epsilon,\bar{w}_\epsilon, \bar{s}_\epsilon]\in X\times X\times X,$ following \cite{Sohr}, Chapter V, Section 2.3, there exists a unique  ${\bf u}_\epsilon\in \mathcal{X}_2\hookrightarrow L^{10}(L^{10})$ solution of $\eqref{KNS_lin}_{4}$-$\eqref{KNS_lin}_{5}$ {with initial data ${\bf u}_{0,\epsilon}$ and non-slip boundary condition.} Also, since $\bar{s}_\epsilon\in X$ and recalling that $\partial\Omega\in C^{1,1},$ from the elliptic regularity applied to the problem \eqref{elip} (cf. \cite{Grisvard}, Theorem
2.4.2.7 and Theorem 2.5.1.1), there exists a unique solution $\bar{c}_\epsilon\in L^\infty(H^2)\cap L^2(H^3)$. Then $\nabla\bar{c}_\epsilon\in L^\infty(H^1)\cap L^2(H^2)\hookrightarrow L^{10}(L^{10})$.
Since $\nabla\bar{n}_\epsilon^+,\nabla\bar{w}_\epsilon^+\in L^2(L^2)$ we get
\begin{equation*}
\chi_1 \nabla\cdot\left(\frac{ \bar{n}^+_\epsilon}{1+\epsilon \bar{n}_\epsilon^+}\nabla \bar{c}_\epsilon\right),\ \chi_2 \nabla\cdot\left(\frac{ \bar{w}^+_\epsilon}{1+\epsilon \bar{w}_\epsilon^+}\nabla \bar{c}_\epsilon\right) \in L^{5/3}(L^{5/3}).
\end{equation*}
Moreover, since ${\bf u}_\epsilon\in L^\infty(H^1)\cap L^2(H^2)\hookrightarrow L^{10}(L^{10})$ we have $\displaystyle {\bf u}_\epsilon \cdot\nabla \bar{n}_\epsilon,{\bf u}_\epsilon \cdot\nabla \bar{w}_\epsilon, {\bf u}_\epsilon \cdot\nabla \bar{s}_\epsilon, \bar{n}_\epsilon^+,\bar{n}_\epsilon^+\bar{n}_\epsilon,\\ \bar{n}_\epsilon^+\bar{w}_\epsilon, \bar{w}_\epsilon^+,\bar{w}_\epsilon^+\bar{w}_\epsilon,\bar{w}_\epsilon^+\bar{n}_\epsilon\in L^{5/3}(L^{5/3})$. Therefore the right hand sides of $\eqref{KNS_lin}_1$, $\eqref{KNS_lin}_2$ and $\eqref{KNS_lin}_3$ belong to $L^{5/3}(L^{5/3})$ and thus, from Theorem \ref{Feiresl}, there exists a unique $[n_\epsilon,w_\epsilon,s_\epsilon]\in\mathcal{X}_{5/3}\times \mathcal{X}_{5/3}\times \mathcal{X}_{5/3}$ solution of \eqref{KNS_lin}$_1$-\eqref{KNS_lin}$_3$, such that
\begin{align}
\label{KNS_lin03}
\|n_\epsilon\|_{\mathcal{X}_{5/3}}&\le C(\|n_{0,\epsilon}\|_{W^{4/5,5/3}},\|{\bf u}_\epsilon\|_{\mathcal{X}_2},\|\bar{n}_\epsilon\|_X,\|\bar{w}_\epsilon\|_X,\|\bar{s}_\epsilon\|_X), \\
\label{KNS_lin04}
\|w_\epsilon\|_{\mathcal{X}_{5/3}}&\le C(\|w_{0,\epsilon}\|_{W^{4/5,5/3}},\|{\bf u}_\epsilon\|_{\mathcal{X}_2},\|\bar{n}_\epsilon\|_X,\|\bar{w}_\epsilon\|_X,\|\bar{s}_\epsilon\|_X), \\
\label{KNS_lin05}
\|s_\epsilon\|_{\mathcal{X}_{5/3}}&\le C(\|c_{0,\epsilon}-\epsilon\Delta c_{0,\epsilon}\|_{W^{4/5,5/3}},\|{\bf u}_\epsilon\|_{\mathcal{X}_2},\|\bar{n}_\epsilon\|_X,\|\bar{w}_\epsilon\|_X,\|\bar{s}_\epsilon\|_X).
\end{align}
Since $\mathcal{X}_{5/3}\hookrightarrow L^2(H^{3/2})\cap L^\infty(L^2)\hookrightarrow X$, we conclude $\Gamma$ is well defined}.\\
\\
{{Now we prove that $\Gamma:X\times X\times X\rightarrow X\times X\times X$ is compact.}} {{Notice that $\mathcal{X}_{5/3}\hookrightarrow L^\infty(H^{1/2})\cap L^{5/3}(H^{17/10})$. Thus, by Lemma 6 in \cite{Guillen2}}}, {$\mathcal{X}_{5/3}\hookrightarrow L^2(H^{3/2})\cap L^\infty(H^{1/2})$ and $H^{3/2}\overset{c}{\hookrightarrow}H^1\hookrightarrow L^{5/3},$ $H^{1/2}\overset{c}{\hookrightarrow}L^{2}\hookrightarrow L^{5/3}$, where $\overset{c}{\hookrightarrow}$ denotes the compact embedding. Then, by using the Aubin-Lions and Simon compactness theorems \cite{lions2,Simon}, the embedding of $ \{ u\in L^2(H^{3/2})\cap L^\infty(H^{1/2}):\partial_tu\in L^{5/3}(L^{5/3}) \}$ into $X$ is compact}.\\
\\
{{Next, we will prove that the fixed points of $\bar{\alpha}\Gamma,$ $\bar{\alpha}\in[0,1]$ are bounded in $X\times X\times X$.}}
{If $\bar{\alpha}=0$ the result is clear; then we assume $\bar{\alpha}\in(0,1].$ If $[n_\epsilon,w_\epsilon,s_\epsilon]$ is a fixed point of $\bar{\alpha}\Gamma$, then $[n_\epsilon,w_\epsilon, s_\epsilon]=\bar{\alpha}\Gamma[n_\epsilon,w_\epsilon,s_\epsilon]$, which says that $[n_\epsilon,w_\epsilon,s_\epsilon]$ verifies}
{\begin{equation}\label{KNS_lin2}
\left\{\begin{array}{lc}
\partial_t n_{\epsilon}-\Delta n_\epsilon=-\bar{\alpha}{\bf u}_\epsilon \cdot\nabla n_\epsilon-\bar{\alpha}\chi_1 \nabla\cdot\left(\frac{ n^+_\epsilon}{1+\epsilon n_\epsilon^+}\nabla c_\epsilon\right)+\bar{\alpha}\mu_1 n_\epsilon^+(1-n_\epsilon-a_1 w_\epsilon)\ \mbox{in}\ \Omega\times(0,T),&\\
\partial_t w_{\epsilon}-\Delta w_\epsilon=-\bar{\alpha}{\bf u}_\epsilon \cdot\nabla w_\epsilon-\bar{\alpha}\chi_2 \nabla\cdot\left(\frac{ w^+_\epsilon}{1+\epsilon w_\epsilon^+}\nabla c_\epsilon\right)+\bar{\alpha}\mu_2w_\epsilon^+(1-a_2n_\epsilon-w_\epsilon)\ \mbox{in}\ \Omega\times(0,T),&\\
\partial_t s_{\epsilon}-\Delta s_\epsilon=-\bar{\alpha}{{\bf u}_\epsilon} \cdot\nabla s_\epsilon- \bar{\alpha}\alpha s^+_\epsilon \frac{1}{\epsilon}\ln(1+\epsilon n_\epsilon^+)-\bar{\alpha} \beta s^+_\epsilon \frac{1}{\epsilon}\ln(1+\epsilon w_\epsilon^+)\ \mbox{in}\ \Omega\times(0,T),  &\\
\partial_t {\bf u}_{\epsilon}-\Delta {\bf u}_\epsilon+(Y_\epsilon{\bf u}_\epsilon\cdot\nabla){\bf u}_\epsilon-\nabla\pi_\epsilon=(\gamma n_\epsilon+\lambda w_\epsilon) \nabla \phi\ \mbox{in}\ \Omega\times(0,T), &\\
\nabla\cdot {\bf u}_\epsilon=0\ \mbox{in}\ \Omega\times(0,T).\end{array}
\right.
\end{equation}}
\hspace{-2mm}{We first prove that $n_\varepsilon\geq 0$. Let $[n_\varepsilon, w_\varepsilon, s_\varepsilon, {\bf u}_\varepsilon, \pi_\varepsilon ]$ be a solution of $\eqref{KNS_lin2}$. As described before, the
right hand side of $\eqref{KNS_lin2}_1$ belongs to $L^{5/3}(L^{5/3})$. Moreover $n_\epsilon\in X\hookrightarrow L^{10/3}(L^{10/3})\hookrightarrow L^{5/2}(L^{5/2})$. Thus, testing $\eqref{KNS_lin2}_1$ by $n_\epsilon^-=\min\{n_\epsilon,0\}\leq 0$, and taking into account that the condition $\nabla\cdot {\bf u}_\epsilon=0$ implies $\int_\Omega({\bf u}_\epsilon\cdot\nabla n_\epsilon^-)n_\epsilon^-=0$, we obtain}
\begin{equation*}
\frac{1}{2}\frac{d}{dt}\|n_\epsilon^-\|_{L^2}^2+\|\nabla n_\epsilon^-\|_{L^2}^2=\bar{\alpha}\chi_1 \left(\frac{n^+_\epsilon}{1+\epsilon n_\epsilon^+}\nabla c_\epsilon,\nabla n^-_\epsilon\right)+\bar{\alpha}\mu_1 (n_\epsilon^+(1-n_\epsilon-a_1 w_\epsilon),n^-_\epsilon)=0,
\end{equation*}
from which $n_\epsilon^-=0$ and then $n_\epsilon\ge 0$ a.e. in $\Omega\times(0,T)$. Analogously, testing $\eqref{KNS_lin2}_2$ and $\eqref{KNS_lin2}_3$ by $w_\epsilon^-$ and $s_\epsilon^-$ respectively, we get $w_\epsilon\ge 0$ and $s_\epsilon\ge 0$ a.e. in $\Omega\times(0,T)$.\\
Now we prove that $s_\epsilon$ is bounded in $X$. For that, testing $\eqref{KNS_lin2}_3$ by $s_\epsilon,$ integrating in $\Omega$, taking into account that $\nabla\cdot {\bf u}_\epsilon=0$, we get 
\begin{align}
	\label{KNS_lin3}
\frac{1}{2}\frac{d}{dt}\Vert s_\epsilon\Vert_{L^2}^2+\Vert \nabla s_\epsilon\Vert_{L^2}^2&=-\bar{\alpha}\left(\alpha s_\epsilon\frac{1}{\epsilon}\ln(1+\epsilon n_\epsilon)+\beta s_\epsilon\frac{1}{\epsilon}\ln(1+\epsilon w_\epsilon),s_\epsilon\right)\leq0.
\end{align}
Thus, integrating in time in (\ref{KNS_lin3}) we get $\|s_\epsilon\|_{L^\infty(L^2)}+\|s_\epsilon\|_{L^2(H^1)}\leq C.$
Now we will bound $n_\epsilon$ in $X$. Testing $\eqref{KNS_lin2}_1$ by $n_\epsilon$ and integrating in $\Omega$ we get
\begin{align}
\label{KNS_lin5}
\frac{1}{2}\frac{d}{dt}\|n_\epsilon\|_{L^2}^2+\|\nabla n_\epsilon\|_{L^2}^2&= \bar{\alpha}\chi_1 \left(\frac{n_\epsilon}{1+\epsilon n_\epsilon}\nabla c_\epsilon,\nabla n_\epsilon\right) +\bar{\alpha}(\mu_1 n_\epsilon(1-n_\epsilon-a_1 w_\epsilon),n_\varepsilon)\notag\\
&\le \bar{\alpha}\chi_1\|n_\epsilon\|_{L^4}\|\nabla c_\epsilon\|_{L^4}\|\nabla n_\epsilon\|_{L^2}+\bar{\alpha}\mu_1\|n_\epsilon\|_{L^2}^2 \notag\\
&\le C\bar{\alpha}\chi_1(\|n_\epsilon\|_{L^2}^{1/4}\|\nabla n_\epsilon\|_{L^2}^{3/4}+\|n_\epsilon\|_{L^2})\|\nabla c_\epsilon\|_{L^4}\|\nabla n_\epsilon\|_{L^2} +\bar{\alpha}\mu_1\|n_\epsilon\|_{L^2}^2\notag\\
&\le \frac{1}{2}\|\nabla n_\epsilon\|_{L^2}^2+C\|n_\epsilon\|_{L^2}^2\|\nabla c_\epsilon\|_{L^4}^8+C\|n_\epsilon\|_{L^2}^2\|\nabla c_\epsilon\|_{L^4}^2+\bar{\alpha}\mu_1\|n_\epsilon\|_{L^2}^2.
\end{align}
Since $c_\epsilon$ is the solution of \eqref{elip} and $s_\epsilon\in X$, then $c_\epsilon\in L^\infty(H^2)\cap L^2(H^3)$ (cf. \cite{Grisvard}, Theorem
2.4.2.7 and Theorem 2.5.1.1), and 
$
\|c_\epsilon\|_{L^\infty(H^2)\cap L^2(H^3)}\le C(\epsilon)\|s_\epsilon\|_X.
$
In particular,
\begin{equation*}
\|\nabla c_\epsilon\|_{L^\infty(L^4)}\le C(\epsilon)\|s_\epsilon\|_X\le C(\epsilon).
\end{equation*}
Then, by applying the Gronwall inequality in \eqref{KNS_lin5} we get 
$
\|n_\epsilon\|_X\le C(\epsilon).$ In a similar way we get
$
\|w_\epsilon\|_X\le C(\epsilon).
$\\
\\
{Finally, we will prove that $\Gamma:X\times X\times X\rightarrow X\times X\times X$ is continuous.} 
{Let $[\bar{n}^m_\epsilon,\bar{w}^m_\epsilon,\bar{s}^m_\epsilon]_{m\in \mathbb{N}}\subset X\times X\times X$ be such that
$
[\bar{n}^m_\epsilon,\bar{w}^m_\epsilon,\bar{s}^m_\epsilon]\to [\bar{n}_\epsilon,\bar{w}_\epsilon,\bar{s}_\epsilon] \ \text{in} \ X\times X\times X\ \mbox{as}\ m\rightarrow\infty.
$
In particular $[\bar{n}^m_\epsilon,\bar{w}^m_\epsilon,\bar{s}^m_\epsilon]_{m\in \mathbb{N}}$ is bounded in $X\times X\times X,$ and from \eqref{KNS_lin03}-\eqref{KNS_lin05}, $[n^m_\epsilon,w^m_\epsilon,s^m_\epsilon]=\Gamma[\bar{n}^m_\epsilon,\bar{w}^m_\epsilon,\bar{s}^m_\epsilon]$ is bounded in $\mathcal{X}_{5/3}\times\mathcal{X}_{5/3}\times\mathcal{X}_{5/3}$. Then there exist a subsequence, still denoted by $[n^m_\epsilon,w^m_\epsilon,s^m_\epsilon]_{m\in \mathbb{N}}$, and $[\hat{n}_\epsilon,\hat{w}_\epsilon,\hat{s}_\epsilon]\in \mathcal{X}_{5/3}\times\mathcal{X}_{5/3}\times\mathcal{X}_{5/3}$ such that
\begin{equation}
\label{KNS_lin7}
[n^m_\epsilon,w^m_\epsilon,s^m_\epsilon]\to[\hat{n}_\epsilon,\hat{w}_\epsilon,\hat{s}_\epsilon], \ \text{weakly in } \mathcal{X}_{5/3}\times\mathcal{X}_{5/3}\times\mathcal{X}_{5/3} \ \text{and strongly in } X\times X\times X.
\end{equation}
The equality $[n^m_\epsilon,w^m_\epsilon,s^m_\epsilon]=\Gamma[\bar{n}^m_\epsilon,\bar{w}^m_\epsilon,\bar{s}^m_\epsilon]$ says that
\begin{equation}\label{KNS_lin8}
\left\{\begin{array}{lc}
\partial_t n_{\epsilon}^m-\Delta n_\epsilon^m=-{\bf u}_\epsilon^m \cdot\nabla \bar{n}^m_\epsilon-\chi_1 \nabla\cdot\left(\frac{ \bar{n}^{m+}_\epsilon}{1+\epsilon \bar{n}^{m+}_\epsilon}\nabla \bar{c}^m_\epsilon\right)+\mu_1 \bar{n}^{m+}_\epsilon(1-\bar{n}^m_\epsilon-a_1 \bar{w}^m_\epsilon)\ \mbox{in}\ \Omega\times(0,T),&\\
\partial_t w^m_{\epsilon}-\Delta \bar{w}^m_\epsilon=-{\bf u}_\epsilon^m \cdot\nabla \bar{w}^m_\epsilon-\chi_2 \nabla\cdot\left(\frac{ \bar{w}^{m+}_\epsilon}{1+\epsilon \bar{w}^{m+}_\epsilon}\nabla \bar{c}^m_\epsilon\right)+\mu_2\bar{w}^{m+}_\epsilon(1-a_2\bar{n}^m_\epsilon-\bar{w}^m_\epsilon)\ \mbox{in}\ \Omega\times(0,T),&\\
\partial_t s^m_{\epsilon}-\Delta \bar{s}^m_\epsilon=-{{\bf u}_\epsilon^m} \cdot\nabla\bar{s}^m_\epsilon-\alpha (\bar{s}^{m+}_\epsilon) \frac{1}{\epsilon}\ln(1+\epsilon \bar{n}^{m+}_\epsilon)-\beta(\bar{s}^{m+}_\epsilon)\frac{1}{\epsilon}\ln(1+\epsilon \bar{w}^{m+}_\epsilon)\ \mbox{in}\ \Omega\times(0,T),  &\\
\partial_t {\bf u}^m_{\epsilon}-\Delta {\bf u}^m_\epsilon+(Y_\epsilon{\bf u}^m_\epsilon\cdot\nabla){\bf u}^m_\epsilon-\nabla\pi^m_\epsilon=(\gamma \bar{n}^m_\epsilon+\lambda \bar{w}^m_\epsilon) \nabla \phi\ \mbox{in}\ \Omega\times(0,T), &\\
\nabla\cdot {\bf u}^m_\epsilon=0\ \mbox{in}\ \Omega\times(0,T),\end{array}
\right.
\end{equation}
and 
\begin{equation}
\label{KNS_lin9}
\left\{
\begin{array}{lc}
\bar{c}^m_\epsilon-\epsilon\Delta\bar{c}^m_\epsilon=\bar{s}^m_\epsilon\ \mbox{in}\ \Omega,\\
\frac{\partial\bar{c}^m_\epsilon(x,t)}{\partial \nu}=0\ \mbox{on}\ \partial\Omega.
\end{array}
\right.
\end{equation}
Testing $\eqref{KNS_lin8}_4$ by ${\bf u}^m_\epsilon\in\mathcal{X}_2$ and using properties of the Yosida approximation (cf. \cite{Sohr}, Chapter 5), we can obtain 
\begin{equation}
\label{KNS_lin13}
\|{\bf u}^m_\epsilon\|_{\mathcal{X}_2}\le C(\epsilon).
\end{equation}
Again, noting that $c^m_\epsilon\in L^\infty(H^2)\cap L^2(H^3)$ and taking into account the strong estimates for $n^m_\epsilon$,${w}^m_\epsilon,$ ${s}^m_\epsilon$ in $\mathcal{X}_{5/3}$  and ${\bf u}^m_\epsilon$ in $\mathcal{X}_{2}$, we can pass to the limit in \eqref{KNS_lin8}, obtaining}
\begin{equation*}
\left\{\begin{array}{lc}
\partial_t \hat{n}_{\epsilon}-\Delta\hat{n}_\epsilon=-{\bf u}_\epsilon \cdot\nabla \bar{n}_\epsilon-\chi_1 \nabla\cdot\left(\frac{ \bar{n}^+_\epsilon}{1+\epsilon \bar{n}^+_\epsilon}\nabla \bar{c}_\epsilon\right)+\mu_1 \bar{n}^+_\epsilon(1-\bar{n}_\epsilon-a_1 \bar{w}_\epsilon)\ \mbox{in}\ \Omega\times(0,T),&\\
\partial_t \hat{w}_{\epsilon}-\Delta \hat{w}_\epsilon=-{\bf u}_\epsilon \cdot\nabla \bar{w}_\epsilon-\chi_2 \nabla\cdot\left(\frac{ \bar{w}^+_\epsilon}{1+\epsilon \bar{w}^+_\epsilon}\nabla \bar{c}_\epsilon\right)+\mu_2\bar{w}^+_\epsilon(1-a_2\bar{n}_\epsilon-\bar{w}_\epsilon)\ \mbox{in}\ \Omega\times(0,T),&\\
\partial_t \hat{s}_{\epsilon}-\Delta \hat{s}_\epsilon=-{{\bf u}_\epsilon} \cdot\nabla\bar{s}_\epsilon-\alpha \bar{s}^+_\epsilon \frac{1}{\epsilon}\ln(1+\epsilon \bar{n}^+_\epsilon)-\beta\bar{s}^+_\epsilon \frac{1}{\epsilon}\ln(1+\epsilon \bar{w}^+_\epsilon)\ \mbox{in}\ \Omega\times(0,T),  &\\
\partial_t {\bf u}_{\epsilon}-\Delta {\bf u}_\epsilon+(Y_\epsilon{\bf u}_\epsilon\cdot\nabla){\bf u}_\epsilon-\nabla\pi_\epsilon=(\gamma \bar{n}_\epsilon+\lambda \bar{w}_\epsilon) \nabla \phi\ \mbox{in}\ \Omega\times(0,T), &\\
\nabla\cdot {\bf u}_\epsilon=0\ \mbox{in}\ \Omega\times(0,T),\end{array}
\right.
\end{equation*}
where $\bar{c}_\epsilon$ is the unique solution of  (\ref{elip}) with right hand side $\bar{s}_\epsilon.$ Namely, $[\hat{n}_\epsilon,\hat{w}_\epsilon,\hat{s}_\epsilon]=\Gamma[\bar{n}_\epsilon,\bar{w}_\epsilon,\bar{s}_\epsilon]$. This implies the continuity of $\Gamma$. Gathering the previous information we conclude that $\Gamma$ satisfies the hypotheses of the Leray-Schauder fixed point theorem. Consequently $\Gamma$ has a fixed point $[n_\epsilon,w_\epsilon,s_\epsilon]$, that is, $\Gamma[n_\epsilon,w_\epsilon,s_\epsilon]=[n_\epsilon,w_\epsilon,s_\epsilon]$, which gives a solution of system \eqref{KNS_aprox}. The uniqueness follows a standard procedure, therefore, we omit it.
\\
\end{proof}

\subsubsection{Uniform estimates.}
In this section we find suitable estimates, independent of $\epsilon$, which allow pass to the limit in \eqref{KNS_aprox}-\eqref{elip}, as $\epsilon$ goes to zero.
\begin{lemm}\label{lemma1}
	Let $[{n}_\epsilon,w_\epsilon,s_\epsilon,{\bf u}_\epsilon,\pi_\epsilon],$ ${n}_\epsilon\geq 0, w_\epsilon\geq0, s_\epsilon\geq0,$ be a strong solution of (\ref{KNS_aprox}). Then, $s_\epsilon\in \mathcal{X}_{5/3}$ satisfying
	\begin{equation}
	\label{max1}
	\begin{cases}
\partial_t	s_{\epsilon}-\Delta s_\epsilon=-{\bf u}_\epsilon\cdot\nabla s_\epsilon-\alpha s_\epsilon\frac{1}{\epsilon}\ln(1+\epsilon n_\epsilon)-\beta s_\epsilon\frac{1}{\epsilon}\ln(1+\epsilon w_\epsilon),\ \mbox{in}\ \Omega\times(0,T),\\
	\frac{\partial s_\epsilon}{\partial\nu}=0,\ \mbox{on}\ \partial\Omega\times(0,T),\\
	s_{\epsilon}(0)=s_{0,\epsilon}\in {W}^{1,q}(\Omega),\ q>3,
	\end{cases}
	\end{equation}
verifies that
\begin{eqnarray}\label{mp}
0\leq s_\epsilon(\x,t)\le C.
\end{eqnarray}
\end{lemm}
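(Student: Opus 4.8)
The lower bound $s_\epsilon\geq 0$ is already in hand from Proposition~\ref{p1}; combined with $n_\epsilon,w_\epsilon\geq 0$ it lets us drop all the positive-part superscripts in $\eqref{KNS_aprox}_3$, so that $s_\epsilon\in\mathcal{X}_{5/3}$ indeed solves \eqref{max1}. The whole content is therefore the upper bound \eqref{mp}, and the plan is to prove it by a maximum-principle argument realized through an energy estimate, tracking the constant so that it is $\epsilon$-independent.

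First I would fix $K:=\|s_{0,\epsilon}\|_{L^\infty(\Omega)}$ and observe that, since $q>3=d$, the embedding $W^{1,q}(\Omega)\hookrightarrow L^\infty(\Omega)$ gives $K\leq C\|s_{0,\epsilon}\|_{W^{1,q}}$, whose right-hand side is bounded uniformly in $\epsilon$ because $s_{0,\epsilon}\to c_0$ in $W^{1,q}(\Omega)$; hence $K\leq C$ with $C$ independent of $\epsilon$. Then I would test \eqref{max1} with $(s_\epsilon-K)^+:=\max\{s_\epsilon-K,0\}\in L^2(H^1)$. Three facts make the right-hand side nonpositive: (i) diffusion contributes $-\|\nabla(s_\epsilon-K)^+\|_{L^2}^2$; (ii) using $\nabla\cdot{\bf u}_\epsilon=0$ and ${\bf u}_\epsilon|_{\partial\Omega}=0$, the convective term vanishes, $\int_\Omega({\bf u}_\epsilon\cdot\nabla s_\epsilon)(s_\epsilon-K)^+=\tfrac12\int_\Omega{\bf u}_\epsilon\cdot\nabla|(s_\epsilon-K)^+|^2=0$; (iii) on $\{s_\epsilon>K\}\subseteq\{s_\epsilon>0\}$, since $n_\epsilon,w_\epsilon\geq0$, the reaction terms $-\alpha s_\epsilon\tfrac1\epsilon\ln(1+\epsilon n_\epsilon)-\beta s_\epsilon\tfrac1\epsilon\ln(1+\epsilon w_\epsilon)$ are $\leq0$. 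This yields
$$\frac12\frac{d}{dt}\|(s_\epsilon-K)^+\|_{L^2}^2+\|\nabla(s_\epsilon-K)^+\|_{L^2}^2\leq0,$$
and since $(s_\epsilon(0)-K)^+=(s_{0,\epsilon}-K)^+=0$ by the choice of $K$, integrating in time forces $(s_\epsilon-K)^+\equiv0$, i.e. $0\leq s_\epsilon\leq K\leq C$ a.e. in $\Omega\times(0,T)$, which is \eqref{mp}.

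The only step needing care is the justification that $(s_\epsilon-K)^+$ is an admissible test function and that $\langle\partial_t s_\epsilon,(s_\epsilon-K)^+\rangle=\tfrac12\frac{d}{dt}\|(s_\epsilon-K)^+\|_{L^2}^2$. This follows from the regularity $s_\epsilon\in\mathcal{X}_{5/3}\hookrightarrow L^2(H^1)\cap L^{10/3}(L^{10/3})$ with $\partial_t s_\epsilon\in L^{5/3}(L^{5/3})$, so that the product $\partial_t s_\epsilon\,(s_\epsilon-K)^+$ is integrable in space-time and the chain rule applies after a routine truncation/mollification; this is a technical point I would dispatch by a standard approximation argument rather than belabor.
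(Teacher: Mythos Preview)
Your argument is correct and is essentially the same as the paper's: the paper simply invokes the parabolic comparison principle (citing \cite{lankeit2016long}, Lemma 2.3) to compare $s_\epsilon$ with the constant supersolution $K=\|s_{0,\epsilon}\|_{L^\infty}$, while you spell out that comparison via the Stampacchia truncation $(s_\epsilon-K)^+$. The mechanism is identical---the constant $K$ is a supersolution because the reaction term is nonpositive and the transport term annihilates constants---and your version has the modest advantage of being self-contained and tracking the $\epsilon$-independence of the bound explicitly through $s_{0,\epsilon}\to c_0$ in $W^{1,q}$.
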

\begin{proof}
From the proof of  Proposition \ref{p1} we know that $s_\epsilon\geq0.$ The assertion $s_\epsilon(\x,t)\le C$ can be obtained from the parabolic comparison principle (e.g. \cite{lankeit2016long}, Lemma 2.3.) \end{proof}

Next, observe that $s\mapsto ks-\mu s^2$, $s \in [0,\infty)$ and 
$s\mapsto (ks-\frac{\mu}{2}s^2)\ln s$, $s \in (0,\infty)$ are bounded from above by some constant $C$; thus using these estimates and testing $\eqref{KNS_aprox}_1$ and $\eqref{KNS_aprox}_2$ by $\ln n_\epsilon$ and $\ln w_\epsilon$ respectively, integrating on $\Omega$, (recalling that $n_\epsilon^+=n_\epsilon$ and $w_\epsilon^+=w_\epsilon$) we get
{\begin{align}\label{ch2}
\frac{d}{dt}\int_{\Omega}n_\epsilon\ln n_\epsilon&=-\int_\Omega\frac{|\nabla n_\epsilon|^2}{n_\epsilon}+\chi_1\int_\Omega\frac{\nabla n_\epsilon\cdot\nabla c_\epsilon}{1+\epsilon n_\epsilon}+\mu_1\int_\Omega n_\epsilon\ln n_\epsilon -\mu_1\int_\Omega n_\epsilon^2 \ln n_\epsilon\notag\\
&\phantom{= }-a_1\mu_1\int_\Omega  n_\epsilon w_\epsilon\ln n_\epsilon +\mu_1\int_\Omega n_\epsilon-\mu_1\int_\Omega n_\epsilon^2 -a_1\mu_1\int_\Omega  n_\epsilon w_\epsilon\notag\\
&\le-\int_\Omega\frac{|\nabla n_\epsilon|^2}{n_\epsilon}+\chi_1\int_\Omega\frac{\nabla n_\epsilon\cdot\nabla c_\epsilon}{1+\epsilon n_\epsilon}-\frac{\mu_1}{2}\int_\Omega n_\epsilon^2 \ln n_\epsilon \leq-a_1\mu_1\int_\Omega  n_\epsilon w_\epsilon\ln n_\epsilon + C,
\end{align}
and analogously
{\begin{align}\label{ch3}
\frac{d}{dt}\int_{\Omega}w_\epsilon\ln w_\epsilon&\le-\int_\Omega\frac{|\nabla w_\epsilon|^2}{w_\epsilon}+\chi_2\int_\Omega\frac{\nabla w_\epsilon\cdot\nabla c_\epsilon}{1+\epsilon w_\epsilon}-\frac{\mu_2}{2}\int_\Omega w_\epsilon^2 \ln w_\epsilon{\leq  }-a_2\mu_2\int_\Omega  n_\epsilon w_\epsilon\ln w_\epsilon + C.
\end{align}}
Now we estimate $\frac{d}{dt}\int_\Omega\frac{|\nabla s_\epsilon|^2}{s_\epsilon}$, for any $\epsilon>0$ on  $(0,T)$. By using $\eqref{KNS_aprox}_3$, and following Lemma 2.8 in \cite{lankeit2016long} we can obtain
\begin{multline}\label{ctr6}
\frac{d}{dt}\int_\Omega\frac{|\nabla s_\epsilon|^2}{s_\epsilon}+k_1\int_\Omega s_\epsilon|D^2\ln s_\epsilon|^2+\frac{k_1}{2}\int_\Omega\frac{|\nabla s_\epsilon|^4}{s_\epsilon^3} \le -2\alpha\int_\Omega\frac{\nabla c_\epsilon\cdot\nabla n_\epsilon}{1+\epsilon n_\epsilon} +k_2\int_\Omega s_\epsilon \\
+2\alpha\epsilon\int_\Omega\frac{\nabla(\Delta c_\epsilon)\cdot\nabla n_\epsilon}{1+\epsilon n_\epsilon}+Ck_3\int_\Omega |\nabla {\bf u}_\epsilon|^2
-2\beta\int_\Omega\frac{\nabla c_\epsilon\cdot\nabla w_\epsilon}{1+\epsilon w_\epsilon}+2\beta\epsilon\int_\Omega\frac{\nabla(\Delta c_\epsilon)\cdot\nabla w_\epsilon}{1+\epsilon w_\epsilon},
\end{multline}
where we have used uniform estimate to $s_\epsilon$ provided by Lemma \ref{lemma1}. {Note that
\begin{align}\label{ctr7}
2\alpha\epsilon\!\int_\Omega\frac{\nabla(\Delta c_\epsilon)\cdot\nabla n_\epsilon}{1+\epsilon n_\epsilon}=2\alpha\epsilon^{1/2}\!\int_\Omega\frac{\nabla(\Delta c_\epsilon)\cdot\nabla n_\epsilon}{n_\epsilon^{1/2}}\frac{(\epsilon n_\epsilon)^{1/2}}{1+\epsilon n_\epsilon} 
\le \delta_1\int_\Omega\frac{|\nabla n_\epsilon|^2}{n_\epsilon}+C_{\delta_1}\epsilon\alpha^2\int_\Omega|\nabla(\Delta c_\epsilon)|^2
\end{align}}
{and
\begin{align}\label{ctr8}
2\beta\epsilon\!\int_\Omega\!\frac{\nabla(\Delta c_\epsilon)\cdot\nabla w_\epsilon}{1+\epsilon w_\epsilon}=2\beta\epsilon^{1/2}\!\int_\Omega\!\frac{\nabla(\Delta c_\epsilon)\cdot\nabla w_\epsilon}{w_\epsilon^{1/2}}\frac{(\epsilon w_\epsilon)^{1/2}}{1+\epsilon w_\epsilon}  
\le \delta_2\!\int_\Omega\! \frac{|\nabla w_\epsilon|^2}{w_\epsilon}+C_{\delta_2}\epsilon\beta^2\!\int_\Omega|\nabla(\Delta c_\epsilon)|^2.
\end{align}
{From $\eqref{elip}$ it holds $\epsilon\nabla(\Delta c_\epsilon)=\nabla c_\epsilon-\nabla s_\epsilon$}. {Then, replacing \eqref{ctr7} and \eqref{ctr8} in \eqref{ctr6}, we get
\begin{multline}\label{ctr9}
\frac{d}{dt}\int_\Omega\frac{|\nabla s_\epsilon|^2}{s_\epsilon}+k_1\int_\Omega s_\epsilon|D^2\ln s_\epsilon|^2+\frac{k_1}{2}\int_\Omega\frac{|\nabla s_\epsilon|^4}{s_\epsilon^3} 
\le -2\alpha\int_\Omega\frac{\nabla c_\epsilon\cdot\nabla n_\epsilon}{1+\epsilon n_\epsilon}-2\beta\int_\Omega\frac{\nabla c_\epsilon\cdot\nabla w_\epsilon}{1+\epsilon w_\epsilon} \\
+\delta_1\int_\Omega\frac{|\nabla n_\epsilon|^2}{n_\epsilon}+\delta_2\int_\Omega\frac{|\nabla w_\epsilon|^2}{w_\epsilon}
+k_2\int_\Omega s_\epsilon+Ck_3\int_\Omega|\nabla u_\epsilon|^2+2\int_\Omega\vert \nabla c_\epsilon\vert^2+2\int_\Omega\vert \nabla s_\epsilon\vert^2.
\end{multline}}}
On the other hand, multiplying $\eqref{KNS_aprox}_4$ by ${\bf u}_\epsilon$, and integrating in $\Omega$, we have
\begin{align*}
\frac{1}{2}\frac{d}{dt}\|{\bf u}_\epsilon\|_{L^2}^2+\|\nabla {\bf u}_\epsilon\|_{L^2}^2=((\gamma n_\epsilon+\lambda w_\epsilon)\nabla\phi,{\bf u}_\epsilon).
\end{align*}
Furthermore, by the Gagliardo--Niremberg inequality we get
\begin{align*}
&\hspace{-0.4cm}(\gamma n_\epsilon\nabla\phi,{\bf u}_\epsilon)+(\lambda w_\epsilon\nabla\phi,{\bf u}_\epsilon)\le \gamma C\|n_\epsilon\|_{L^{6/5}}\|{\bf u}_\epsilon\|_{L^6}+\lambda C\|w_\epsilon\|_{L^{6/5}}\|{\bf u}_\epsilon\|_{L^6}  \\
&\le \gamma C\|n_\epsilon^{1/2}\|_{L^{12/5}}^2\|\nabla {\bf u}_\epsilon\|_{L^2} +\lambda C\|w_\epsilon^{1/2}\|_{L^{12/5}}^2\|\nabla {\bf u}_\epsilon\|_{L^2}\\
&\le \delta_3\|\nabla {\bf u}_\epsilon\|_{L^2}^2+\gamma^2 C_{\delta_3}\|n_\epsilon^{1/2}\|_{L^{6/5}}^4+\lambda^2 C_{\delta_3}\|w_\epsilon^{1/2}\|_{L^{6/5}}^4 \\
&\le \delta_3\|\nabla {\bf u}_\epsilon\|_{L^2}^2+\gamma^2C_{\delta_3}(\|n_\epsilon^{1/2}\|_{L^2}^{3/2}\|\nabla n_\epsilon^{1/2}\|_{L^2}^{1/2}+\|n_\epsilon^{1/2}\|_{L^2}^2)^2\\
&\ \ +\lambda^2C_{\delta_3}(\|w_\epsilon^{1/2}\|_{L^2}^{3/2}\|\nabla w_\epsilon^{1/2}\|_{L^2}^{1/2}+\|w_\epsilon^{1/2}\|_{L^2}^2)^2 \\
&\le \delta_3\|\nabla {\bf u}_\epsilon\|_{L^2}^2+\gamma^2C_{\delta_3}\|n_\epsilon\|_{L^1}(\|n_\epsilon\|_{L^1}^2\|\nabla n_\epsilon^{1/2}\|_{L^2}^{1/2}+1)^2 \\
&\ \ +\lambda^2C_{\delta_3}\|w_\epsilon\|_{L^1}(\|w_\epsilon\|_{L^1}^2\|\nabla w_\epsilon^{1/2}\|_{L^2}^{1/2}+1)^2\\
&\le \delta_3\|\nabla {\bf u}_\epsilon\|_{L^2}^2+\gamma^2C_{\delta_3}(\|\nabla n_\epsilon^{1/2}\|_{L^2}+1)+\lambda^2C_{\delta_3}(\|\nabla w_\epsilon^{1/2}\|_{L^2}+1).
\end{align*}
Thus, we conclude that there are $\delta_3,C>0$ such that for every $\epsilon>0$
\begin{equation}\label{ctr10}
\frac{1}{2}\frac{d}{dt}\int_\Omega|{\bf u}_\epsilon|^2+(1-\delta_3)\int_\Omega|\nabla {\bf u}_\epsilon|^2\le C_{\delta_3}\left(\gamma^2\int_\Omega\frac{|\nabla n_\epsilon|^2}{n_\epsilon}+\lambda^2\int_\Omega\frac{|\nabla w_\epsilon|^2}{w_\epsilon}\right)+C, \quad \text{on} \ (0,T).
\end{equation}
Then, multiplying \eqref{ch2} and \eqref{ch3} by a suitable constants to cancel terms with \eqref{ctr9}, multiplying \eqref{ctr10} by a constant if necessary, and using Lemma \ref{lemma1} we obtain} 
{\begin{multline}\label{ctr12}
\frac{d}{dt}\left[\int_{\Omega}n_\epsilon\ln n_\epsilon+\int_{\Omega}w_\epsilon\ln w_\epsilon+C\int_\Omega\frac{|\nabla s_\epsilon|^2}{s_\epsilon}+C\int_\Omega|{\bf u}_\epsilon|^2\right]+C\int_\Omega\frac{|\nabla n_\epsilon|^2}{n_\epsilon}  \\
+C\int_\Omega\frac{|\nabla w_\epsilon|^2}{w_\epsilon}+C\int_\Omega s_\epsilon|D^2\ln s_\epsilon|^2+C\int_\Omega\frac{|\nabla s_\epsilon|^4}{s_\epsilon^3}+C\int_\Omega|\nabla {\bf u}_\epsilon|^2 \\
+C\int_\Omega n_\epsilon^2 \ln n_\epsilon
+C\int_\Omega  n_\epsilon w_\epsilon\ln n_\epsilon+C\int_\Omega w_\epsilon^2 \ln w_\epsilon
+C\int_\Omega  n_\epsilon w_\epsilon\ln w_\epsilon\\
\le C\int_\Omega s_\epsilon+\int_\Omega\vert \nabla c_\epsilon\vert^2+\int_\Omega\vert \nabla s_\epsilon\vert^2+C
\le \int_\Omega\vert \nabla c_\epsilon\vert^2+\int_\Omega\vert \nabla s_\epsilon\vert^2+C.
\end{multline}
Multiplying \eqref{elip} by $c_\epsilon$ and $-\Delta c_\epsilon$ we get
\begin{equation}
\label{KNS_lin15}
\frac{1}{2}\|c_\epsilon\|_{L^2}^2+\epsilon\|\nabla c_\epsilon\|_{L^2}^2\le\frac{1}{2}\|s_\epsilon\|_{L^2}^2,
\end{equation}
and
\begin{equation}
\label{KNS_lin16}
\frac{1}{2}\|\nabla c_\epsilon\|_{L^2}^2+\epsilon\|\Delta c_\epsilon\|_{L^2}^2\le\frac{1}{2}\|\nabla s_\epsilon\|_{L^2}^2.
\end{equation}
Recalling that  $\|\nabla s_\epsilon\|_{L^2(L^2)}\le C$ uniformly in $\epsilon$, from (\ref{KNS_lin16}) we have $\|\nabla c_\epsilon\|_{L^2(L^2)}\le C$ uniformly in $\epsilon.$ Moreover, $\sqrt{\epsilon}\Delta c_\epsilon$ is bounded in $L^2(L^2)$ uniformly in $\epsilon$. Thus, integrating in time (\ref{ctr12}), in particular, we obtain that
\begin{equation*}
\left\|{s_\epsilon^{-1/2}\nabla s_\epsilon}{}\right\|_{L^\infty(L^2)}\le C,
\end{equation*}
and since $\int_\Omega|\nabla s_\epsilon|^2=\int_\Omega\frac{|\nabla s_\epsilon|^2}{s_\epsilon}s_\epsilon$ and $s_\epsilon$ is uniformly bounded (Lemma \ref{lemma1}), we get
\begin{equation}\label{tr1}
\|\nabla s_\epsilon\|_{L^\infty(L^2)}\le C,
\end{equation}
uniformly in $\epsilon$.} {Also, from (\ref{ctr12}), Lemma \ref{lemma1} and the identity $\Delta s_\epsilon=s_\epsilon\Delta (\ln s_\epsilon)+\frac{|\nabla s_\epsilon|^2}{s_\epsilon}$, we have
\begin{equation}\label{tr1c}
\|\Delta s_\epsilon\|_{L^2(L^2)}\le C,
\end{equation}
uniformly in $\epsilon$. In addition, from \eqref{KNS_lin15}--\eqref{tr1} we have
\begin{equation*}
\{c_\epsilon\} \ \text{ is bounded in } L^\infty(H^1), \ \{\sqrt{\epsilon}\Delta c_\epsilon\} \ \text{ is bounded in } L^\infty(L^2).
\end{equation*}
Furthermore, computing $\Delta$ in \eqref{elip}, and testing the resulting equation by $\Delta c_\epsilon$, we get
\begin{equation*}
\frac{1}{2}\|\Delta c_\epsilon\|_{L^2}^2+\epsilon\|\nabla\Delta c_\epsilon\|_{L^2}^2\le\frac{1}{2}\|\Delta s_\epsilon\|_{L^2}^2.
\end{equation*}
Therefore, by \eqref{tr1c} it holds $c_\epsilon$ is bounded in $L^2(H^2),$ and $\{\sqrt{\epsilon}\Delta c_\epsilon\}$ is bounded in $L^2(H^1).$ Thus we have
\begin{equation}\label{ctr13}
\begin{cases}
\{c_\epsilon\} \ \text{ is bounded in } L^\infty(H^1)\cap L^2(H^2)\hookrightarrow L^{10}(L^{10}), \\
\{\sqrt{\epsilon}\Delta c_\epsilon\} \ \text{ is bounded in } L^\infty(L^2)\cap L^2(H^1).
\end{cases}
\end{equation}
Since $2\nabla\sqrt{n_\epsilon}=\frac{\nabla n_\epsilon}{\sqrt{n_\epsilon}}$, by \eqref{ctr12}, $\nabla\sqrt{n_\epsilon}\in L^2(L^2)$. Also $\sqrt{n_\epsilon}\in L^2(H^1)\hookrightarrow L^2(L^6)$. Moreover, from step three in the proof of  Proposition \ref{p1} we know that $n_\epsilon\in L^\infty(L^1),$ which implies $\sqrt{n_\epsilon}\in L^\infty(L^2)$. Thus,
\begin{equation}\label{ctr14}
\begin{cases}
\{\sqrt{n_\epsilon}\} \ \text{ is bounded in } L^\infty(L^2)\cap L^2(L^6)\hookrightarrow L^{10/3}(L^{10/3}), \\
\{\nabla\sqrt{n_\epsilon}\} \ \text{ is bounded in } L^2(L^2).
\end{cases}
\end{equation}
Since $\nabla n_\epsilon=2\sqrt{n_\epsilon}\nabla\sqrt{n_\epsilon},$ by \eqref{ctr14} we have
\begin{equation}\label{ctr16}
\{n_\epsilon\} \ \text{ is bounded in } L^{5/4}(W^{1,5/4}).
\end{equation}
Moreover, by $\eqref{ctr14}_1$ it holds
\begin{equation}\label{ctr17}
\{n_\epsilon\} \ \text{ is bounded in } L^{5/3}(L^{5/3}).
\end{equation}
Analogously we get 
\begin{equation}\label{ctr170}
\{w_\epsilon\} \ \text{ is bounded in } L^{5/4}(W^{1,5/4})\cap L^{5/3}(L^{5/3}).
\end{equation}
On the other hand, from \eqref{ctr12} we get
\begin{equation}\label{ctr15}
\{{\bf u}_\epsilon\} \ \text{ is bounded in } L^\infty(L^2)\cap L^2(V)\hookrightarrow L^{10/3}(L^{10/3}),
\end{equation}
and from (\ref{tr1}) and (\ref{tr1c}) we obtain
\begin{equation}\label{ctr15_1}
\{s_\epsilon\} \ \text{ is bounded in } L^\infty(H^1)\cap L^2(H^2)\hookrightarrow L^{10}(L^{10}).
\end{equation}

\subsubsection{The limit as $\epsilon\to 0$}

{Notice that from \eqref{elip} and $\eqref{ctr13}_2$ we obtain
\begin{equation}\label{ctr18}
s_\epsilon-c_\epsilon=-\epsilon\Delta c_\epsilon\to 0 \quad\text{as } \epsilon\to 0, \quad \text{in}\ L^\infty(L^2)\cap L^2(H^1).
\end{equation}}
{Therefore, from $\eqref{ctr13}_1$ and \eqref{ctr14}--\eqref{ctr18}, there exist limit functions $[n,w,c,{\bf u}]$ such that
 for some subsequence of $\{[n_\epsilon,w_\epsilon,c_\epsilon,{\bf u}_\epsilon,s_\epsilon]\}_{\epsilon>0}$, denoted in the same way, the following convergences hold, as $\epsilon\to 0$,
\begin{equation}\label{ctr19}
\begin{cases}
n_\epsilon\rightharpoonup n \quad \text{weakly in } L^{5/3}(L^{5/3})\cap L^{5/4}(W^{1,5/4}), \\
w_\epsilon\rightharpoonup w \quad \text{weakly in } L^{5/3}(L^{5/3})\cap L^{5/4}(W^{1,5/4}), \\
c_\epsilon\rightharpoonup c \quad \text{weakly in } L^2(H^2) \ \text{and weakly* in } L^\infty(H^1), \\
{\bf u}_\epsilon\rightharpoonup {\bf u} \quad \text{weakly in } L^2(V) \ \text{and weakly* in } L^\infty(L^2), \\
{s_\epsilon\rightharpoonup c \quad \text{weakly in } L^2(H^1) \ \text{and weakly* in } L^\infty(L^2).}
\end{cases}
\end{equation}}
Now, by $\eqref{ctr13}$, \eqref{ctr17}, \eqref{ctr15} we can get
	\begin{equation}
	\label{ctr24}
	\begin{cases}
	\partial_tn_\epsilon\rightharpoonup \partial_tn\quad \text{weakly in } L^{10/9}((W^{1,10})'), \\
	\partial_tw_\epsilon\rightharpoonup \partial_tw\quad \text{weakly in } L^{10/9}((W^{1,10})'), \\
	\partial_ts_\epsilon\rightharpoonup \partial_tc\quad \text{weakly in } L^{5/3}(L^{5/3}), \\
	\partial_t{\bf u}_\epsilon\rightharpoonup \partial_t{\bf u}\quad \text{weakly in } L^{5/3}((W^{1,5/2})').
	\end{cases}
	\end{equation}}

{From (\ref{ctr14}), \eqref{ctr16}, \eqref{ctr15}, \eqref{ctr15_1}, \eqref{ctr24}, taking into account that $W^{1,5/4}\overset{c}{\hookrightarrow}L^2\hookrightarrow (W^{1,10})'$, $H^2\overset{c}{\hookrightarrow}H^1\hookrightarrow L^{5/3}$ and $H^1\overset{c}{\hookrightarrow}L^2\hookrightarrow L^{5/3}$, and using the Aubin-Lions lemma and the Simon compactness theorem, we have
\begin{equation}
\label{ctr25}
\begin{cases}
n_\epsilon\to n\quad \text{strongly in } L^{5/4}(L^2)\cap C([0,T];(W^{1,10})'), \\
w_\epsilon\to w\quad \text{strongly in } L^{5/4}(L^2)\cap C([0,T];(W^{1,10})'), \\
s_\epsilon\to c\quad \text{strongly in } L^2(H^1)\cap C([0,T];L^2),\\
{\bf u}_\epsilon\to {\bf u}\quad \text{strongly in } L^2(L^2).
\end{cases}
\end{equation}}

{By \eqref{ctr13}$_1$, $\{\nabla c_\epsilon\}$ is bounded in $L^{10/3}(L^{10/3}),$ which, joint with (\ref{ctr17}) implies that
\begin{equation}
\label{ctr26}
\frac{n_\epsilon}{1+\epsilon n_\epsilon}\nabla c_\epsilon\rightharpoonup \zeta \quad \text{weakly in } L^{10/9}(L^{10/9}),
\end{equation}
for some $\zeta\in L^{10/9}(L^{10/9}).$}
From $\eqref{ctr19}_3$ we have $\nabla c_\epsilon\rightharpoonup\nabla c$ weakly in $L^{10/3}(L^{10/3})$. Also from \eqref{ctr25}$_1$, $\{n_\epsilon\}_{\epsilon>0}$ is relatively compact in $L^{5/4}(L^{5/4})$. Then, taking into account \eqref{ctr17}, and the interpolation inequality
\begin{equation*}
\|n_\epsilon\|_{L^p(L^p)}\le\|n_\epsilon\|_{L^{5/4}(L^{5/4})}^{1-\theta}\|n_\epsilon\|_{L^{5/3}(L^{5/3})}^\theta, \ \theta=4-(5/p)\in(0,1),
\end{equation*}
(which leads to $\frac{5}{4}<p<\frac{5}{3}$), we obtain that $\{n_\epsilon\}$ is relatively compact in $L^p(L^p)$. Since $L^{5/3}(L^{5/3})\hookrightarrow L^{20/13}(L^{20/13})$ and noting that
\begin{align*}
\int_{\Omega}(n_\epsilon\nabla c_\epsilon-n\nabla c)\varphi&=
\int_{\Omega}n_\epsilon(\nabla c_\epsilon-\nabla c)\varphi
+\int_{\Omega}(n_\epsilon-n)\nabla c_\epsilon\varphi \\
&\le \int_{\Omega}n_\epsilon(\nabla c_\epsilon-\nabla c)\varphi
+\|n_\epsilon-n\|_{L^{20/13}}\|\nabla c_\epsilon\|_{L^{10/3}}\|\varphi\|_{L^{20}} \underset{\epsilon\to 0}{\longrightarrow}0,
\end{align*}
{we can conclude, from \eqref{ctr26}, that $\zeta=n\nabla c$. That is
\begin{equation}
\label{ctr26_1}
\frac{n_\epsilon}{1+\epsilon n_\epsilon}\nabla c_\epsilon\rightharpoonup n\nabla c \quad \text{weakly in } L^{10/9}(L^{10/9}).
\end{equation}}

{In a similar way, by $\eqref{ctr19}_4$, noting that $\{n_\epsilon {\bf u}_\epsilon\}$ is bounded in $L^{10/9}(L^{10/9}),$ and  using that $\{n_\epsilon\}_{\epsilon>0}$ is relatively compact in $L^p(L^p)$ for all $p<5/3,$ we get
\begin{equation}
\label{ctr27}
n_\epsilon {\bf u}_\epsilon\rightharpoonup n{\bf u}\quad \text{weakly in } L^{10/9}(L^{10/9}).
\end{equation}}
On the other hand, by using the Dunford-Pettis theorem (see details in  \cite[ Proposition 2.1]{lankeit2016long}), we get
\begin{equation}
\label{ctr261}
n^2_\epsilon\rightarrow n^2 \quad \text{in } L^{1}(L^1)\ \mbox{and}\  n_\epsilon w_\epsilon\rightarrow nw \quad \text{in } L^{1}(L^1).
\end{equation}
Therefore, taking the limit in the regularized problem $\eqref{KNS_aprox}_1$, as $\epsilon\to 0$, by $\eqref{ctr19}_1$, \eqref{ctr26_1}-\eqref{ctr261}, the limit $n$ satisfies the weak formulation for the $n$-equation in Definition \ref{weak}; which also holds  for $w$.  On the other hand, from \eqref{ctr19}, using that $\{{\bf u}_\epsilon s_\epsilon\}$ is bounded in $L^{10/3}(L^{10/3})$ and $\eqref{ctr25}_3$ we get
\begin{equation}
\label{ctr27_1}
{\bf u}_\epsilon s_\epsilon\rightharpoonup c{\bf u}\quad \text{weakly in } L^{5/3}(L^{5/3}).
\end{equation}
Now, since $\frac{\ln(1+\epsilon n_\epsilon)}{\epsilon}\le n_\epsilon$ and $\frac{\ln(1+\epsilon w_\epsilon)}{\epsilon}\le w_\epsilon$, by  \eqref{ctr17} and \eqref{ctr15_1} we have
\begin{equation}
\label{ctr31d}
\left\{s_\epsilon\frac{\ln(1+\epsilon n_\epsilon)}{\epsilon}\right\},\ \left\{s_\epsilon\frac{\ln(1+\epsilon w_\epsilon)}{\epsilon}\right\} \ \text{ are bounded in } L^{10/7}(L^{10/7}).
\end{equation}
By \eqref{ctr17} and the dominated convergence theorem 
\begin{align*}
	&\left\|\frac{\ln(1+\epsilon n_\epsilon)}{\epsilon}-n\right\|_{L^{5/3}(L^{5/3})} \\
	&\le \left\|\frac{\ln(1+\epsilon n_\epsilon)}{\epsilon}-\frac{\ln(1+\epsilon n)}{\epsilon}\right\|_{L^{5/3}(L^{5/3})}+\left\|\frac{\ln(1+\epsilon n)}{\epsilon}-n\right\|_{L^{5/3}(L^{5/3})} \underset{\epsilon\to 0}{\longrightarrow}0.
\end{align*}
Then, by the weak convergence in $\eqref{ctr19}_3$, the strong convergence above and \eqref{ctr31d}, we get
\begin{equation}
\label{ctr32}
s_\epsilon\frac{\ln(1+\epsilon n_\epsilon)}{\epsilon}\rightharpoonup cn \quad \text{weakly in } L^{10/7}(L^{10/7}).
\end{equation}
Analogously, we get
\begin{equation}
\label{ctr320}
s_\epsilon\frac{\ln(1+\epsilon w_\epsilon)}{\epsilon}\rightharpoonup cw \quad \text{weakly in } L^{10/7}(L^{10/7}).
\end{equation}
Therefore, from (\ref{ctr24}), (\ref{ctr25}), (\ref{ctr27_1}), and (\ref{ctr32})-(\ref{ctr320}) we get
\begin{eqnarray}\label{weak_c}
\int_0^T\langle \partial_tc,\varphi\rangle+\int_0^T\int_\Omega \nabla c\cdot \nabla  \varphi-\int_0^T\int_\Omega c{\bf u}\cdot\nabla\varphi-\alpha\int_0^T\int_\Omega cn\varphi-\beta\int_0^T\int_\Omega cw\varphi=0,
\end{eqnarray}
for all $\varphi\in L^{10/3}(W^{1,10/3})$.  Observe that (\ref{weak_c}) is a weak formulation for the $c$-equation of \eqref{KNS}. Thus, integrating by parts in (\ref{weak_c}) and using that $n,w\in L^{5/3}(L^{5/3}),$ $c\in L^2(H^2)\cap L^\infty(H^1)$ and ${\bf u}\in L^2(H^1)\cap L^\infty(L^2)$, we get
\begin{equation}\label{ctr34}
\partial_t c+{\bf u}\cdot\nabla c=\Delta c-\alpha cn-\beta cw, \quad\text{in} \ L^{10/7}(L^{10/7}).
\end{equation}

Finally, to pass to the limit in the $u_\epsilon$-equation, we argue as usual, recalling that $\{Y_\epsilon u_\epsilon\}$ is uniformly bounded in $L^2(L^6),$ and
\begin{eqnarray*}
Y_\epsilon {\bf u}_\epsilon\otimes {\bf u}_\epsilon-{\bf u}\otimes {\bf u}=Y_\epsilon {\bf u}_\epsilon\otimes ({\bf u}_\epsilon-{\bf u})+Y_\epsilon ({\bf u}_\epsilon-{\bf u})\otimes {\bf u}_\epsilon+(Y_\epsilon-I) {\bf u}\otimes {\bf u}.
\end{eqnarray*}
Also, from \eqref{elip1}, $\left[n_{0,\epsilon}(\x),w_{0,\epsilon}(\x),s_{0,\epsilon}(\x),{\bf u}_{0,\epsilon}(\x)\right]\to [n_0,w_0,c_0,{\bf u}_0]$ as $\epsilon\to 0$ and thus, by \eqref{ctr25}, $n(0)=n_0$, $w(0)=w_0$, $c(0)=c_0$, ${\bf u}(0)={\bf u}_0.$ To conclude the proof of Theorem \ref{teor2} it remains to see the non-negativity of $n,c$ and $w,$ but it follows 
from the non-negativity of the regularized solutions. 

\subsection{Regularity of weak solutions. Proof of Theorem \ref{teor3}.}

In this section we prove the regularity criterion established in Theorem \ref{teor3}.
\begin{proof}
{The proof is based on a bootstrapping argument following the ideas of \cite{Guillen2, JC-EJ} combined with the theory of parabolic regularity, interpolation and embedding properties of Sobolev spaces. From Theorem \ref{teor2}, there exists a weak solution $[n,w,c,{\bf u}]$ of system \eqref{KNS}  in the sense of Definition \ref{weak}. The regularity is obtained into several steps in which, we use Theorem \ref{Feiresl}.  Below diagram we skecth the main steps in the proof.}\\
\\
{\it{Step 1}}: From Theorem 4.2.1 in \cite{casas}, to see that if ${\bf u}\in L^8(L^4)$, then ${\bf u}\in\mathcal{X}_2\hookrightarrow L^{10}(L^{10})$.  Now, starting with the regularity $n,w\in L^{20/7}(L^{20/7}), c\in \mathcal{X}_2,$ we can get  $c\in\mathcal{X}_{20/7}$ has showed in the next diagram.
\begin{center}
\begin{tikzpicture}[node distance=5mm,line width=1.0pt,
cuadro/.style={rectangle, draw=black!60, fill=blue!5, very thick, minimum size=5mm},
]
\node (B0)  {\ };
\node[cuadro]        (Ba)       [right=of B0] {$n,w\in L^{20/7}(L^{20/7}),$\  \ \newline $c\in \mathcal{X}_2$};
\node[cuadro]        (Bb)       [right=of Ba] {$c\in \mathcal{X}_{20/9}$};
\draw [->] (Ba) --  (Bb) ;

\node[cuadro]        (Bc)       [right=of Bb] {$c\in L^{20}(L^{20})$};
\draw [->] (Bb) --  (Bc) ;

\node[cuadro]        (Bd)       [right=of Bc] {$c\in \mathcal{X}_{5/2}$};
\draw [->] (Bc) --  (Bd) ;

\node[cuadro]        (Be)       [below=of Bd] {$c\in L^{\infty}(L^{\infty})\cap L^5(W^{1,5})$};
\draw [->] (Bd) --  (Be) ;

\node[cuadro]        (Bf)       [left=of Be] {$c\in \mathcal{X}_{20/7}$};
\draw [->] (Be) --  (Bf) ;

\end{tikzpicture}
\end{center}

{Observing that $cn$ and $cw$ cannot be improved beyond $L^{20/7}(L^{20/7}),$ we cannot go further in the regularity of $c$. Thus, we first improve the regularity of $n$,$w$, and then go back later again on the regularity of $c.$}\\

{\it{Step 2}}: We can prove that $n,w \in L^\infty(L^2)\cap L^2(H^1),$ as showed in the next diagram.
\begin{center}
\begin{tikzpicture}[node distance=5mm,line width=1.0pt,
cuadro/.style={rectangle, draw=black!60, fill=blue!5, very thick, minimum size=5mm},
]

\node[cuadro]        (Ca)       [below=of Ba] {$n,w\in L^{20/7}(L^{20/7})\cap L^{5/4}(W^{1,5/4})$};
\node[cuadro]        (Cb)       [right=of Ca] {$n,w\in \mathcal{X}_{20/19}$};
\draw [->] (Ca) --  (Cb) ;

\node[cuadro]        (Cc)       [right=of Cb] {$n,w\in \mathcal{X}_{10/9}$};
\draw [->] (Cb) --  (Cc) ;

\node[cuadro]        (Cd)       [right=of Cc] {$n,w\in \mathcal{X}_{20/17}$};
\draw [->] (Cc) --  (Cd) ;

\node[cuadro]        (Ce)       [below=of Cd] {$n,w\in \mathcal{X}_{5/4}$};
\draw [->] (Cd) --  (Ce) ;

\node[cuadro]        (Cf)       [left=of Ce] {$n,w\in \mathcal{X}_{4/3}$};
\draw [->] (Ce) --  (Cf) ;

\node[cuadro]        (Cg)       [left=of Cf] {$n,w\in \mathcal{X}_{10/7}$};
\draw [->] (Cf) --  (Cg) ;

\node[cuadro]        (Dg)       [below=of Cg] {$n,w\in \mathcal{X}_{20/13}$};
\draw [->] (Cg) --  (Dg) ;

\node[cuadro]        (Eg)       [right=of Dg] {$n,w\in L^{\infty}(L^2)\cap L^2(H^1)$};
\draw [->] (Dg) --  (Eg) ;

\end{tikzpicture}
\end{center}

{\it{Step 3}}:\ Finally, we can prove that $c,n,w\in\mathcal{X}_4$ as indicated in the diagram below.

\begin{center}
\begin{tikzpicture}[node distance=5mm,line width=1.0pt,
cuadro/.style={rectangle, draw=black!60, fill=blue!5, very thick, minimum size=5mm},
]

\node[cuadro]        (Da)       [below=of Ca] {$n,w \in L^{\infty}(L^{2})\cap L^2(H^1)$};
\node[cuadro]        (Db)       [right=of Da] {$c\in \mathcal{X}_{10/3}$};
\draw [->] (Da) --  (Db) ;

\node[cuadro]        (Dc)       [right=of Db] {$n,w\in \mathcal{X}_{5/3}$};
\draw [->] (Db) --  (Dc) ;

\node[cuadro]        (Dd)       [right=of Dc] {$c\in \mathcal{X}_{4}$};
\draw [->] (Dc) --  (Dd) ;

\node[cuadro]        (De)       [right=of Dd] {$n,w\in \mathcal{X}_{20/11}$};
\draw [->] (Dd) --  (De) ;

\node[cuadro]        (Df)       [below=of De] {$n,w\in \mathcal{X}_{2}$};
\draw [->] (De) --  (Df) ;

\node[cuadro]        (Ef)       [left=of Df] {$n,w\in \mathcal{X}_{5/2}$};
\draw [->] (Df) --  (Ef) ;

\node[cuadro]        (Ff)       [left=of Ef] {$n,w\in \mathcal{X}_{20/7}$};
\draw [->] (Ef) --  (Ff) ;

\node[cuadro]        (Gf)       [left=of Ff] {$n,w\in \mathcal{X}_{4}$};
\draw [->] (Ff) --  (Gf) ;
\end{tikzpicture}
\end{center}
Thus we conclude the proof.
\end{proof}

\section{Fully discrete numerical approximation}\label{SS3}
In this section,  we construct a  numerical approximation for the weak solutions of the system (\ref{KNS}). With this aim, first we introduce an equivalent system obtained by considering an auxiliary variable; and then, we consider a finite element semi-implicit approximation for this equivalent formulation. We prove  the well-posedness and some uniform estimates for the discrete solutions.

\subsection{An equivalent system}
In order to control numerically the second order nonlinear terms in the equations that describe the dynamics of the species and prove optimal error estimates in the convergence  analysis, we will consider the auxiliary variable $\mathbf{s} = \nabla c$. Thus, we obtain the following mixed variational form for the variables $n, w, c, \mathbf{s},\mathbf{u}$ and $\pi$:  
\begin{equation}\label{Chemoweak3}
\left\{
\begin{array}{lc} \vspace{0.1 cm}
(\partial_t n,\bar{n}) + D_n(\nabla n,\nabla\bar{n}) + (\mathbf{u}\cdot\nabla n,\bar{n})=\chi_1(n\mathbf{s},\nabla\bar{n})+\mu_1(n(1-n-a_1 w),\bar{n}),&\\
(\partial_t w,\bar{w}) + D_w(\nabla w,\nabla\bar{w}) + (\mathbf{u}\cdot\nabla w,\bar{w})=\chi_2(w\mathbf{s},\nabla\bar{w})+\mu_2(w(1-a_2 n-w),\bar{w}),&\\
(\partial_t\mathbf{s},\bar{\mathbf{s}})+D_c (\mbox{rot}(\mathbf{s}),\mbox{rot}(\bar{\mathbf{s}}))+D_c (\nabla\cdot\mathbf{s},\nabla\cdot\bar{\mathbf{s}})=(\mathbf{u}\cdot\mathbf{s}+(\alpha n+\beta w)c,\nabla\cdot\bar{\mathbf{s}}),& \\
(\partial_t c,\bar{c}) + D_c (\nabla c,\nabla\bar{c}) +(\mathbf{u}\cdot\nabla c,\bar{c})=-((\alpha n+\beta w)c,\bar{c}),&\\
(\partial_t\mathbf{u},\bar{\mathbf{u}})+D_{\mathbf{u}}(\nabla \mathbf{u},\nabla\bar{\mathbf{u}}) +k((\mathbf{u}\cdot\nabla)\mathbf{u},\bar{\mathbf{u}})=( \pi,\nabla\cdot\overline{\mathbf{u}})+((\gamma n+\lambda w)\nabla\phi,\bar{\mathbf{u}}),& \\
(\nabla \cdot {\mathbf{u}},\bar{\pi}) =0,&
\end{array}
\right.
\end{equation}
for all $[\bar{n},\bar{w},\bar{\bf s},\bar{c},\bar{\bf u},\bar{\pi}]\in H^1(\Omega)\times H^1(\Omega)\times H_{\mathbf{s}}^1(\Omega)\times H^1(\Omega) \times H_0^1(\Omega)\times  {L}_0^2(\Omega)$, where the initial condition considered for the variable  ${\bf s}$ is ${\bf s}(\x,0)={\bf s}_0(\x) =\nabla c_0(\x)$. 
Notice that (\ref{Chemoweak3})$_3$ has been obtained taking the gradient operator to equation (\ref{KNS})$_3$, multiplying the resulting equation by $\bar{\bf s}\in  H^1_{\bf s}(\Omega)$, integrating by parts in some terms and adding 
 $(\mbox{rot}(\mathbf{s}),\mbox{rot}(\bar{\mathbf{s}}))$ (using that $\mbox{rot}({\bf s})=0$).

We will prove that (\ref{Chemoweak3}) defines a variational formulation of  (\ref{KNS}). Observe that if $[{n},{w},{c},{\mathbf{u}},{\pi}]$ is a classical solution of (\ref{KNS}), considering $\mathbf{s} = \nabla c$ and making previous procedure, we can conclude that $[{n},{w},{c},\mathbf{s},{\mathbf{u}},{\pi}]$ satisfies (\ref{Chemoweak3}). Now, if $[{n},{w},{c},\mathbf{s},{\mathbf{u}},{\pi}]$ is a smooth solution of (\ref{Chemoweak3}), integrating by parts equation (\ref{Chemoweak3})$_4$ we get
\begin{eqnarray}\label{ceq}
\partial_t c-D_c \Delta c+ \mathbf{u}\cdot \nabla c= -(\alpha n+\beta w)c \quad \ a.e.\ \mbox{in}\ \Omega.
\end{eqnarray}
Again, taking the gradient operator on both sides of (\ref{ceq}), testing by $\bar{\bf{s}}\in  H_{\bf{s}}^1(\Omega),$ subtracting the resulting equation  from (\ref{Chemoweak3})$_3$ and denoting ${\boldsymbol{\rho}}=\nabla c-{\bf{s}}$, we arrive at
$$
(\partial_t{\boldsymbol{\rho}},\bar{\bf{s}})
+\, D_c(\nabla \cdot \boldsymbol{\rho},\nabla \cdot \bar{\bf{s}})
+\, D_c(\mbox{rot} ({\boldsymbol{\rho}}),\mbox{rot} (\bar{\bf{s}}))
-({\bf u} \cdot \boldsymbol{\rho},\nabla \cdot \bar{\bf{s}})=0, \quad \forall \bar{\bf{s}} \in H_{\bf{s}}^1(\Omega),
$$
from which, taking $\bar{\bf s}=\boldsymbol{\rho}\in  H_{\bf{s}}^1(\Omega)$ and using (\ref{EQs}) and (\ref{in3D}), we deduce that
$$
\begin{array}{rcl}
\displaystyle\frac{1}{2} \displaystyle\frac{d}{dt} \Vert \boldsymbol{\rho} \Vert_{L^2}^2
+ \, 
D_c\Vert \nabla \cdot \boldsymbol{\rho} \Vert_{L^2}^2
+D_c\Vert \mbox{rot} ({\boldsymbol{\rho}}) \Vert_{L^2}^2
&\!\!\!\! = \!\!\!\!& ({\bf u} \cdot \boldsymbol{\rho}, \nabla \cdot \boldsymbol{\rho} )
\le \Vert {\bf u} \Vert_{L^6} \Vert \boldsymbol{\rho} \Vert_{L^3} \Vert \nabla \cdot \boldsymbol{\rho} \Vert_{L^2}\\
&\!\!\!\! \le \!\!\!\! & \displaystyle\frac{D_c}{4} \Vert \nabla \cdot \boldsymbol{\rho} \Vert_{L^2}^2 +
\frac{C}{D_c}
\Vert {\bf u} \Vert_{L^6}^2 \Vert \boldsymbol{\rho} \Vert_{L^2}\Vert \boldsymbol{\rho} \Vert_{H^1}\\
& \!\!\!\! \le \!\!\!\! & \displaystyle\frac{D_c}{2} \left(
\Vert \nabla \cdot {\boldsymbol{\rho}} \Vert_{L^2}^2
+\Vert \mbox{rot} ({\boldsymbol{\rho}}) \Vert_{L^2}^2 
\right) + C(1+

\Vert {\bf u} \Vert_{L^6}^4) \Vert {\boldsymbol{\rho}} \Vert_{L^2}^2.
\end{array}$$
Taking into account that ${\boldsymbol \rho}(0)=0$, we deduce that ${\boldsymbol \rho}=0$ and therefore ${\bf s}=\nabla c$.
 Finally, replacing ${\bf s} = \nabla c$ in (\ref{Chemoweak3})$_{1,2}$ and integrating by parts once again, we get that $[{n},w,{c},{\mathbf{u}},{\pi}]$ is a smooth solution of  (\ref{KNS}). 
 
\subsection{Numerical scheme}\label{NScheme}
For the construction of the numerical approximations, we  will use finite element approximations in space and finite differences in time (considered for simplicity on a uniform partition of $[0,T]$ with time step
$\Delta t=T/N : (t_{m} = {m}\Delta t)_{{m}=0}^{{m}=N}$), combined with splitting ideas to decouple the computation of the fluid part from competition with chemotaxis one. Moreover, in order to deal with the velocity trilinear form and the nonlinear convective terms, we will use the skew-symmetric forms $A$ and $B$ given in (\ref{a6})-(\ref{a6aa}).  With respect to the spatial discretization, we consider conforming FE spaces:
$N_h\times   W_h \times C_h  \times {{\Sigma}}_h \times{ U}_h \times \Pi_h  \subset H^1(\Omega)\times {H}^1(\Omega)\times H^1(\Omega)\times H^1_{\bf s}(\Omega)\times H^1_0(\Omega)\times {L}^2_0(\Omega)$ corresponding to a family of shape-regular and quasi-uniform triangulations of $\overline{\Omega}$, $\{\mathcal{T}_h\}_{h>0}$,  made up of simplexes $K$ (triangles in 2D and 
tetrahedra en 3D), so that $\overline{\Omega}= \cup_{K\in \mathcal{T}_h} K$, where $h = \max_{K\in \mathcal{T}_h} h_K$, with $h_K$ being the diameter of $K$. 

We assume that ${ U}_h$ and $\Pi_h $ satisfy the following discrete {\it inf-sup} condition: There exists a constant $\beta>0$, independent of $h$, such that
\begin{equation}\label{LBB}
\sup_{\mathbf{v}\in { U}_h\setminus\{0\}} \frac{-(g,\nabla \cdot \mathbf{v}) }{\|\mathbf{v}\|_{{\bf U}_h}} \geq \beta \|g\|_{\Pi_h}, \quad \forall g
\in \Pi_h.
\end{equation}
\begin{remark}{\bf (Some possibilities for the choice of the discrete spaces)} 
For the spaces $[{ U}_h, \Pi_h]$, we can choose the Taylor-Hood approximation $\mathbb{P}_{r}\times \mathbb{P}_{r-1}$ (for $r\geq 2$) (\cite{GR,Sten}); or the approximation $\mathbb{P}_1-bubble \times \mathbb{P}_1$ (\cite{GR}) (for $r=1$). On the other hand, the spaces 
$[N_h, W_h , C_h ,{{\Sigma}}_h]$ are approximated by $\mathbb{P}_{r_1}\times\mathbb{P}_{r_2}\times\mathbb{P}_{r_3}\times\mathbb{P}_{r_4}$- continuous FE, with $r_i\geq 1$ ($i=1,...,4$).
\end{remark}

Then, we can define the well-known Stokes operator $(\mathbb{P}_{\mathbf{u}},\mathbb{P}_\pi):{H}^1_0(\Omega)\times L^2_0(\Omega)\rightarrow { U}_h\times\Pi_h$ such that  $[\mathbb{P}_{\mathbf{u}}\mathbf{u},\mathbb{P}_\pi \pi]\in {U}_h\times\Pi_h$ satisfies
\begin{equation}\label{StokesOp}
\left\{
\begin{array}{lc} \vspace{0.1 cm}
D_{\mathbf{u}}(\nabla
(\mathbb{P}_{\mathbf{u}}{\mathbf{u}}  -{\mathbf{u}}), \nabla\bar{\mathbf{u}}) - (\mathbb{P}_\pi \pi - \pi, \nabla \cdot \bar{\mathbf{u}})=0,\quad \forall \bar{\mathbf{u}}\in { U}_h,& \\
(\nabla\cdot
(\mathbb{P}_{\mathbf{u}}{\mathbf{u}}  -{\mathbf{u}}), \bar{\pi})=0, \quad \forall \bar{\pi}\in \Pi_h,&
\end{array}
\right.
\end{equation}
and the following approximation and stability properties hold (\cite{GV}):
\begin{equation}\label{StabStk1}
\| [\mathbf{u}-\mathbb{P}_ {\mathbf{u}}\mathbf{u},\pi-\mathbb{P}_{\pi} \pi]\|_{ H^1\times L^2} + \frac{1}{h}\| \mathbf{u}-\mathbb{P}_ {\mathbf{u}}\mathbf{u}\|_{L^2} \leq K h^r \|[\mathbf{u},\pi]\|_{H^{r+1}\times H^{r}},
\end{equation}
\begin{equation}\label{StabStk2}
\|[\mathbb{P}_ {\mathbf{u}}\mathbf{u},\mathbb{P}_{\pi} \pi]\|_{W^{1,6}\times L^6} \leq C \|[\mathbf{u},\pi]\|_{H^{2}\times H^1}.
\end{equation}
Moreover, we consider the following interpolation operators:  
$$
\mathbb{P}_n:H^1(\Omega)\rightarrow N_h, \quad
\mathbb{P}_w:H^1(\Omega)\rightarrow  W_h, \quad \mathbb{P}_c:H^1(\Omega)\rightarrow  C_h ,\quad  \mathbb{P}_{\boldsymbol \sigma}:{H}^1_{\boldsymbol{s}}(\Omega)\rightarrow{{\Sigma}}_h,
$$
such that for all $n\in H^1(\Omega)$, $w\in H^1(\Omega)$, $c\in H^1(\Omega)$ and ${\boldsymbol{s}}\in {H}^1_{\bf{s}}(\Omega)$, $\mathbb{P}_n n \in N_h$, $\mathbb{P}_w w \in W_h$, $\mathbb{P}_c c\in C_h$ and $\mathbb{P}_{\boldsymbol{s}} {\boldsymbol {s}}\in {{\Sigma}}_h$ satisfy
\begin{equation}\label{Interp1New}
\left\{
\begin{array}{lc} \vspace{0.1 cm}
(\nabla
(\mathbb{P}_n n - n), \nabla\bar{n})+(\mathbb{P}_n n - n,\bar{n})=0,\quad \forall \bar{n}\in N_h,& \\
\vspace{0.1 cm}
(\nabla
(\mathbb{P}_w w - w), \nabla\bar{w})+(\mathbb{P}_w w - w,\bar{w})=0,\quad \forall \bar{w}\in W_h,& \\
\vspace{0.1 cm}
(\nabla
(\mathbb{P}_c c - c), \nabla\bar{c})  + (\mathbb{P}_c c - c, \bar{c})=0,\quad \forall \bar{c}\in C_h,& \\
\vspace{0.1 cm}
(\nabla \cdot (\mathbb{P}_{\boldsymbol{s}} {\boldsymbol{s}} - {\boldsymbol{s}}),\nabla \cdot \bar{\boldsymbol{s}}) + (\mbox{rot}(\mathbb{P}_{\boldsymbol{s}} {\boldsymbol{s}}-{\boldsymbol{s}}),\mbox{rot}(\bar{\boldsymbol{s}})) + (\mathbb{P}_{\boldsymbol {s}} {\boldsymbol{s}} - {\boldsymbol{s}},\bar{\boldsymbol{s}})=0,\ \ \forall \bar{\boldsymbol{s}}\in {{\Sigma}}_h.&
\end{array}
\right.
\end{equation}
From the Lax-Milgram Theorem, we have that the interpolation operators $\mathbb{P}_n$, $\mathbb{P}_w$, $\mathbb{P}_c$ and $\mathbb{P}_{\boldsymbol{s}}$ are well defined. Moreover,  the following interpolation errors and stability properties hold
\begin{equation}\label{aprox01} 
\left\{
\begin{array}{lc}
\| n-\mathbb{P}_n n\|_{L^2}+ h \|  n-\mathbb{P}_n n  \|_{H^1} \leq K h^{r_1+1} \|n\|_{H^{r_1+1}}, & \forall n \in H^{r_1+1}(\Omega),\\[.2cm]
\| w-\mathbb{P}_w w\|_{L^2}+ h \|  w-\mathbb{P}_w w  \|_{H^1} \leq K h^{r_2+1} \|w\|_{H^{r_2+1}}, & \forall w \in H^{r_2+1}(\Omega),\\[.2cm]
\| c-\mathbb{P}_c c\|_{L^2}+ h \|  c-\mathbb{P}_c c  \|_{H^1} \leq K h^{r_3+1} \|c\|_{H^{r_3+1}}, & \forall c \in H^{r_3+1}(\Omega),\\[.2cm]
 \Vert {\bf s}- \mathbb{P}_{\bf s} {\bf s}  \Vert_{L^2} + h \Vert {\bf s} - \mathbb{P}_{\bf s} {\bf s} \Vert_{H^1} \leq Ch^{r_4 + 1} \Vert  {\bf s}\Vert_{H^{r_4 +1}}, &  \forall  {\bf s}\in {H}^{r_4 +1}(\Omega),
\end{array}
\right.
\end{equation} 
\begin{equation}\label{aprox01-aNN}
\| [\mathbb{P}_n n, \mathbb{P}_w w, \mathbb{P}_c c, \mathbb{P}_{\boldsymbol s} {\boldsymbol{s}}] \|_{H^{1}} \leq  \| [n,w,c, {\boldsymbol{s}}] \|_{H^1},
\end{equation}
\begin{equation}\label{aprox01-a}
\| [\mathbb{P}_n n, \mathbb{P}_w w, \mathbb{P}_c c, \mathbb{P}_{\boldsymbol s} {\boldsymbol{s}}] \|_{W^{1,6}} \leq C \| [n,w,c, {\boldsymbol{s}}] \|_{H^2}.
\end{equation}
We also consider the following skew-symmetric trilinear forms which will be used in the formulation of the numerical scheme:
\begin{eqnarray}
B(\mathbf{v}_1,\mathbf{v}_2,\mathbf{v}_3)&=&\frac{1}{2}\Big[\Big( (\mathbf{v}_1\cdot \nabla)\mathbf{v}_2,\mathbf{v}_3 \Big) - \Big( (\mathbf{v}_1\cdot \nabla)\mathbf{v}_3,\mathbf{v}_2 \Big)\Big], \ \forall \mathbf{v}_1,\mathbf{v}_2,\mathbf{v}_3 \in {H}^1(\Omega),\label{a6}\\
A(\mathbf{v},w_1,w_2)&=&\frac{1}{2}\Big[\Big( (\mathbf{v}\cdot \nabla)w_1,w_2 \Big) - \Big( (\mathbf{v}\cdot \nabla)w_2,w_1 \Big)\Big], \ \forall \mathbf{v} \in {H}^1(\Omega), \ w_1,w_2\in H^1(\Omega), \label{a6aa}
\end{eqnarray}
which satisfy the following properties
\begin{eqnarray}
B(\mathbf{v}_1,\mathbf{v}_2,\mathbf{v}_3)\!&=&\!\Big( (\mathbf{v}_1\cdot \nabla)\mathbf{v}_2,\mathbf{v}_3 \Big), \ \ \forall \mathbf{v}_1\in {V} , \ \ \mathbf{v}_2,\mathbf{v}_3  \in  {H}^1(\Omega), \label{aA1} \\
A(\mathbf{v}_1,w_1,w_2)
&=&\!\Big( (\mathbf{v}_1\cdot \nabla)w_1,w_2 \Big), \ \ \forall \mathbf{v}_1\in {V} , \ \ w_1,w_2  \in  {H}^1(\Omega),\label{aA2} \\
B(\mathbf{v}_1,\mathbf{v}_2,\mathbf{v}_2)&=&0,\ \ \forall \mathbf{v}_1,\mathbf{v}_2\in {H}^1(\Omega), \label{a7} \\
A(\mathbf{v},w,w)&=&0,\ \forall w\in H^1(\Omega),\ \ \mathbf{v}\in {H}^1(\Omega). \label{a8} 
\end{eqnarray}

Then, taking into account  (\ref{Chemoweak3}), we consider the following first order in time, linear and decoupled scheme:\\

{\textbf{Initialization:}  Let $[n^0_{h}, w^0_{h},c^0_h,\mathbf{s}^0_h,\mathbf{u}^0_h]=[\mathbb{P}_{n} n_{0},\mathbb{P}_{w} w_{0}, \mathbb{P}_c c_0, \mathbb{P}_{\mathbf{s}} {\mathbf{s}}_0, \mathbb{P}_{\mathbf{u}} \mathbf{u}_0] \in N_{h} \times W_{h} \times C_h \times {{\Sigma}}_h \times {U}_h$.}  \\

{\textbf{Time step $m$:} Given the vector $[n^{m-1}_{h}, w^{m-1}_{h},c^{m-1}_h,\mathbf{s}^{m-1}_h,\mathbf{u}^{m-1}_h]\in N_{h} \times W_{h} \times C_h \times {{\Sigma}}_h \times { U}_h$, compute $[n^m_{h}, w^m_{h},c^m_h,\mathbf{s}^{m}_h,\mathbf{u}^m_h,\pi_h^m]\in N_{h} \times W_{h} \times C_h \times {{\Sigma}}_h \times {U}_h\times \Pi_h$ such that for each $[\bar{n}, \bar{w},\bar{c},\bar{\mathbf{s}},\bar{\mathbf{u}},\bar{\pi}] \in N_{h} \times W_{h} \times C_h \times {{\Sigma}}_h \times { U}_h\times \Pi_h$ it holds:}
\begin{flalign}\label{scheme1} 
&a) \ \ 
 (\delta_t n^m_{h},\bar{n}) + D_n (\nabla n^m_{h},\nabla\bar{n}) + A(\mathbf{u}^{m-1}_h, n^m_{h},\bar{n})\nonumber\\
 &\hspace{2cm}=  \  \chi_1(n^{m}_{h}\mathbf{s}^{m-1}_{h},\nabla\bar{n})+\mu_1(n^{m}_{h},\bar{n})-\mu_1(n^{m}_{h}[n^{m-1}_{h}]_{+},\bar{n})-\mu_1a_1 (n^{m}_{h}[w^{m-1}_{h}]_{+},\bar{n})   
 ,\nonumber\\
&b) \ \ (\delta_t w^m_{h},\bar{w}) +D_w (\nabla w^m_{h},\nabla\bar{w}) + A(\mathbf{u}^{m-1}_h, w^m_{h},\bar{w})\nonumber\\
&\hspace{2cm}=\chi_2(w^{m}_{h}\mathbf{s}^{m-1}_{h},\nabla\bar{w})+\mu_2(w^{m}_{h},\bar{w})-\mu_2a_2(w^{m}_{h}[n^{m-1}_{h}]_{+},\bar{w})-\mu_2(w^{m}_{h}[w^{m-1}_{h}]_{+},\bar{w}),\nonumber\\
&c) \ \ (\delta_t \mathbf{s}^m_h,\bar{\mathbf{s}})+D_c(\nabla \cdot \mathbf{s}^m_h,\nabla \cdot \bar{\mathbf{s}})+D_c(\mbox{rot}(\mathbf{s}^m_h),\mbox{rot}(\bar{\mathbf{s}}))=(\mathbf{u}^{m}_h\cdot\mathbf{s}^{m-1}_h+(\alpha n^{m}_{h}+\beta w^{m}_{h})c^{m-1}_h,\nabla\cdot\bar{\mathbf{s}}), & \nonumber\\
&d) \ \ (\delta_t c^m_h,\bar{c}) + D_c (\nabla c^m_h,\nabla \bar{c})+A(\mathbf{u}^{m-1}_h,c^m_h,\bar{c})=-(\alpha c^{m}_h[n^{m}_{h}]_{+},\bar{c})-(\beta c^{m}_h[w^{m}_{h}]_{+},\bar{c}), \\
&e) \ \ (\delta_t \mathbf{u}^m_h,\bar{\mathbf{u}})+kB(\mathbf{u}^{m-1}_h,\mathbf{u}^m_h,\bar{\mathbf{u}})+D_{\mathbf{u}}(\nabla \mathbf{u}^m_h,\nabla \bar{\mathbf{u}})-(\pi^m_h,\nabla \cdot \bar{\mathbf{u}}) = ((\gamma n^{m}_{h}+\lambda w^{m}_{h})\nabla\phi,\bar{\mathbf{u}}), \nonumber \\
&f) \ \ (\nabla \cdot {\mathbf{u}_h^m},\bar{\pi}) =0, \nonumber
\end{flalign}
{\noindent where, in general, we denote $\delta_t a^m_h=\frac{a^m_h-a^{m-1}_h}{\Delta t}$ and $[a^{m}_h]_{+} = \mbox{max}\lbrace{0,a^{m}_h}\rbrace$.}\\

\begin{remark}
The skew-symmetric forms $A$ and $B$ verifying the  properties (\ref{aA1})-(\ref{a8}) will be important in order to prove the well-posedness of the scheme (\ref{scheme1}), to get uniform estimates for the discrete solutions and to develop the convergence analysis.
\end{remark}

\subsection{Well-posedness and uniform estimates}
In this section, we analyze the well-posedness of the scheme (\ref{scheme1}) and prove some uniform estimates for the discrete solutions. To this aim, we make the following inductive hypothesis:  there exists a positive constant $K>0$, independent of $m$, such that
\begin{equation}\label{IndHyp}
\Vert [{\mathbf{s}}^{m-1}_h,{c}^{m-1}_h]\Vert_{L^{10/3}}\leq K, \qquad \forall m\geq 1.
\end{equation} 
After the convergence analysis, we verify the validity of (\ref{IndHyp}).

{\begin{theo}{\bf (Well-posedness of (\ref{scheme1}))}\label{WPs}
There exists a constant $C > 0$ (independent of discrete parameters) that if $\Delta t < C$, then the numerical scheme (\ref{scheme1}) is well-posed, that is, there exists a unique $[n^m_{h}, w^m_{h},c^m_h,\mathbf{s}^{m}_h,\mathbf{u}^m_h,\pi_h^m]\in N_{h} \times W_{h} \times C_h \times {{\Sigma}}_h \times {U}_h\times \Pi_h$ solution of the scheme (\ref{scheme1}).
\end{theo}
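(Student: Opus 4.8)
The plan is to prove well-posedness of the scheme (\ref{scheme1}) by exploiting its linear and decoupled structure: at each time step $m$ the system splits into three successive linear problems that can be solved in turn, and for each one existence and uniqueness reduces to checking coercivity of the associated bilinear form on the relevant finite element space. Since each subproblem is a square linear system in finite dimension, it suffices to prove uniqueness (equivalently, that the only solution of the homogeneous problem is zero), which follows from coercivity. A smallness condition on $\Delta t$ will be needed precisely to absorb the zero-order terms with the ``wrong'' sign (coming from the reaction terms $\mu_1 n^m_h$, $\mu_2 w^m_h$ and, in the fluid block, from the time-derivative discretization) into the coercive part.

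First I would observe that the unknowns at step $m$ decouple as follows. Equations (\ref{scheme1})$a$ and (\ref{scheme1})$b$ form a linear system for $[n^m_h, w^m_h]$ alone, since the chemotaxis terms involve $\mathbf{s}^{m-1}_h$, the transport uses $\mathbf{u}^{m-1}_h$, and the competition terms are linear in $[n^m_h, w^m_h]$ with coefficients $[n^{m-1}_h]_+, [w^{m-1}_h]_+$ from the previous step; moreover $a$ is decoupled from $b$ because the cross terms $(n^m_h [w^{m-1}_h]_+, \bar n)$ and $(w^m_h [n^{m-1}_h]_+, \bar w)$ only couple to the previous step. So I would solve (\ref{scheme1})$a$ for $n^m_h$, then (\ref{scheme1})$b$ for $w^m_h$, each a coercive linear problem on $N_h$ (resp. $W_h$): the bilinear form is $\tfrac{1}{\Delta t}(n^m_h,\bar n) + D_n(\nabla n^m_h,\nabla\bar n) + A(\mathbf{u}^{m-1}_h,n^m_h,\bar n) - \chi_1(n^m_h\mathbf{s}^{m-1}_h,\nabla\bar n) + \mu_1(n^m_h[n^{m-1}_h]_+,\bar n) + \mu_1 a_1(n^m_h[w^{m-1}_h]_+,\bar n) - \mu_1(n^m_h,\bar n)$, whose coercivity I establish by noting $A(\mathbf{u}^{m-1}_h, n^m_h, n^m_h)=0$ by (\ref{a8}), the terms with $[n^{m-1}_h]_+,[w^{m-1}_h]_+$ are nonnegative, the chemotaxis term is controlled via Hölder, the inductive hypothesis (\ref{IndHyp}) and Young's inequality to be absorbed by $\tfrac12 D_n\|\nabla n^m_h\|_{L^2}^2$ plus a constant times $\|n^m_h\|_{L^2}^2$, and finally $(\tfrac{1}{\Delta t} - \mu_1 - C)\|n^m_h\|_{L^2}^2 \geq 0$ provided $\Delta t$ is small enough. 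Here the constant $C$ depends on $K$ from (\ref{IndHyp}) and on $\|\mathbf{u}^{m-1}_h\|$, which are controlled at this stage.

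Next, with $[n^m_h, w^m_h]$ now known, I would solve the Stokes-type block (\ref{scheme1})$e$–(\ref{scheme1})$f$ for $[\mathbf{u}^m_h, \pi^m_h]$: this is a linear saddle-point problem whose well-posedness follows from coercivity of $\tfrac{1}{\Delta t}(\mathbf{u}^m_h,\bar{\mathbf{u}}) + kB(\mathbf{u}^{m-1}_h,\mathbf{u}^m_h,\bar{\mathbf{u}}) + D_{\mathbf{u}}(\nabla\mathbf{u}^m_h,\nabla\bar{\mathbf{u}})$ on the kernel of the divergence constraint — using $B(\mathbf{u}^{m-1}_h,\mathbf{u}^m_h,\mathbf{u}^m_h)=0$ by (\ref{a7}) and the Poincaré inequality — together with the discrete inf-sup condition (\ref{LBB}) to recover $\pi^m_h$; here no extra smallness on $\Delta t$ beyond the previous one is required since all zero-order terms have the right sign. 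Then $\mathbf{s}^m_h$ solves (\ref{scheme1})$c$, a coercive linear problem on ${\Sigma}_h$ using the equivalent norm (\ref{EQs}) (the terms $(\mathbf{u}^m_h\cdot\mathbf{s}^{m-1}_h,\nabla\cdot\bar{\mathbf{s}})$ and $((\alpha n^m_h + \beta w^m_h)c^{m-1}_h,\nabla\cdot\bar{\mathbf{s}})$ are data, given the already-computed quantities), and finally $c^m_h$ solves (\ref{scheme1})$d$, again coercive on $C_h$ because $A(\mathbf{u}^{m-1}_h, c^m_h, c^m_h)=0$ and the terms $(\alpha c^m_h[n^m_h]_+,\bar c)$, $(\beta c^m_h[w^m_h]_+,\bar c)$ contribute nonnegatively when testing with $\bar c = c^m_h$. \textbf{The main obstacle} I expect is bookkeeping the smallness threshold on $\Delta t$: one must make sure the constant $C$ in ``$\Delta t < C$'' is genuinely independent of $m$, which is where the inductive hypothesis (\ref{IndHyp}) is essential — it provides a uniform bound on $\|\mathbf{s}^{m-1}_h\|_{L^{10/3}}$ (hence on the chemotaxis coefficient) — while the bounds on $\|\mathbf{u}^{m-1}_h\|$ and on $[n^{m-1}_h]_+,[w^{m-1}_h]_+$ entering the coercivity estimates do not degrade the threshold because those terms have favorable signs or are handled without needing their size. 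Assembling these observations yields existence and uniqueness of $[n^m_h, w^m_h, c^m_h, \mathbf{s}^m_h, \mathbf{u}^m_h, \pi^m_h]$, completing the induction.
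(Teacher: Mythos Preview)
Your proposal is correct and follows essentially the same route as the paper: exploit the decoupled linear structure at each step, and for each finite-dimensional subproblem reduce existence to uniqueness by testing the homogeneous system against itself, using the skew-symmetry properties (\ref{a7})--(\ref{a8}), the sign of the $[\cdot]_+$ terms, and the inductive hypothesis (\ref{IndHyp}) together with the interpolation inequality (\ref{in3Dl4}) to control the chemotaxis term. The only cosmetic differences are that the paper phrases this as ``two solutions, difference is zero'' rather than as coercivity, and groups $[c^m_h,\mathbf{s}^m_h,\mathbf{u}^m_h,\pi^m_h]$ together rather than in your sequential order; both orderings are valid. One small correction: the constant in the $\Delta t$ threshold does \emph{not} depend on $\|\mathbf{u}^{m-1}_h\|$, since $A(\mathbf{u}^{m-1}_h,n^m_h,n^m_h)=0$ removes that term entirely --- you note this yourself at the end, so the argument stands.
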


\begin{proof}
First, in order to show the existence and uniqueness of $[n^m_h,w^m_h]\in N_h\times W_h$ solution of (\ref{scheme1})$_{a),b)}$, it suffices to prove uniqueness (since is an algebraic linear system). Suppose that  there exist $[n^m_{h,1}, w^m_{h,1}], [n^m_{h,2}, w^m_{h,2}]\in N_{h} \times W_{h}$ two possible solutions of  (\ref{scheme1})$_{a),b)}$; then defining $[n^m_{h},w^m_h]=[n^m_{h,1}-n^m_{h,2},w^m_{h,1}-w^m_{h,2}]$,
we have that $[n^m_{h}, w^m_{h}]\in N_{h} \times W_{h}$ satisfies
\begin{eqnarray}
\displaystyle
&(n^m_{h},\bar{n})&\!\!\!+\Delta t D_n (\nabla n^m_{h},\nabla \bar{n})+\Delta t A(\mathbf{u}^{m-1}_h, n^m_{h},\bar{n})=\chi_1 \Delta t (n^m_{h}{\bf s}^{m-1}_{h},\nabla \bar{n})\nonumber\\\vspace{0.1 cm}
&&\!\!\!+\mu_1\Delta t(n^{m}_{h},\bar{n})-\mu_1\Delta t(n^{m}_{h}[n^{m-1}_{h}]_{+},\bar{n})-\mu_1\Delta t a_1 (n^{m}_{h}[w^{m-1}_{h}]_{+},\bar{n}), \ \ \forall \bar{n}\in N_h, \label{modelf02clina}\\ \vspace{0.1 cm}
&(w^m_{h},\bar{w})&\!\!\!+\Delta t D_w (\nabla w^m_{h},\nabla \bar{n})+\Delta t A(\mathbf{u}^{m-1}_{h}, w^m_h,\bar{w}) = \chi_2\Delta t (w^m_{h}{\bf s}^{m-1}_{h},\nabla \bar{w})\nonumber\\ \vspace{0.1 cm}
&&\!\!\!+\mu_2\Delta t(w^{m}_{h},\bar{w})-\mu_2\Delta ta_2(w^{m}_{h}[n^{m-1}_{h}]_{+},\bar{w})-\mu_2\Delta t(w^{m}_{h}[w^{m-1}_{h}]_{+},\bar{w}),\ \ \forall \bar{w}\in W_h.  \label{modelf02clinb}
\end{eqnarray}
Taking $\bar{n}=n^m_{h}$ in (\ref{modelf02clina}), using (\ref{a8}) and taking into account that $(n^{m}_{h}[n^{m-1}_{h}]_{+},n^{m}_{h})=(| n^{m}_{h}|^{2},[n^{m-1}_{h}]_{+})\geq0$ and $(n^{m}_{h}[w^{m-1}_{h}]_{+},n^{m}_{h})=(| n^{m}_{h}|^{2},[w^{m-1}_{h}]_{+})\geq0$ (since $[n^{m-1}_{h}]_{+},[w^{m-1}_{h}]_{+}\geq 0$),  we deduce that
\begin{equation}\label{Nn}
(1-\mu_1\Delta t)\Vert n^m_{h}\Vert_{L^2}^2+\Delta t D_n  \Vert\nabla n^m_{h}\Vert_{L^2}^2 \leq \chi_1\Delta t (n^m_{h}{\bf s}^{m-1}_{h},\nabla n^m_{h}).
\end{equation}
Using the 3D interpolation inequality (\ref{in3Dl4}), the inductive hypothesis (\ref{IndHyp}), and the H\"older and Young inequalities, we bound the right hand side of (\ref{Nn}) in the following way 
\begin{eqnarray*}
(1-\mu_1\Delta t)\Vert n^m_{h}\Vert_{L^2}^2+\Delta t D_n \Vert \nabla n^m_{h}\Vert_{L^2}^2&\!\!\!\!\leq\!\!\!\!&\chi_1\Delta t \Vert n^m_{h}\Vert_{L^5}\Vert \mathbf{s}^{m-1}_{h}\Vert_{L^{10/3}}\Vert \nabla n^m_{h}\Vert_{L^2}\\
&\!\!\!\!\leq\!\!\!\!& \displaystyle\frac{\Delta tD_n }{4}\Vert \nabla n^m_{h}\Vert_{L^2}^{2}+\frac{C(\chi_1)^{2}\Delta t}{D_n}\Vert n^m_{h}\Vert_{L^2}^{1/5}\Vert n^m_{h}\Vert_{H^1}^{9/5}\\
&\!\!\!\!\leq\!\!\!\!&\displaystyle\frac{\Delta tD_n }{2}(\Vert n^m_{h}\Vert_{L^2}^{2}+\Vert \nabla n^m_{h}\Vert_{L^2}^{2})+C_1\Delta t\Vert n^m_{h}\Vert_{L^2}^{2},
\end{eqnarray*}
from which we arrive at
\begin{equation}\label{1condit}
\Big[\frac{1}{2}-\Big(\mu_1 + \frac{D_n}{2}\Big)\Delta t\Big]\Vert n^m_{h}\Vert_{L^2}^2+\frac{1}{2} \Vert n^m_{h}\Vert_{L^2}^2+\frac{\Delta tD_n}{2}\Vert \nabla n^m_{h}\Vert_{L^2}^2\leq C_1\Delta t\Vert n^m_{h}\Vert_{L^2}^{2}.  
\end{equation}
In the same way, we have the next estimate for $w^n_h$
\begin{equation}\label{2condit}
\Big[\frac{1}{2}-\Big(\mu_2 + \frac{D_w}{2}\Big)\Delta t\Big]\Vert w^m_{h}\Vert_{L^2}^2+\frac{1}{2}\Vert w^m_{h}\Vert_{L^2}^2+\frac{\Delta tD_w}{2}\Vert \nabla w^m_{h}\Vert_{L^2}^2\leq C_2\Delta t\Vert w^m_{h}\Vert_{L^2}^{2}.
\end{equation}
Thus, if $\Delta t \leq  1/E$ where $E=2\max\{\mu_1+\frac{D_n}{2}+C_1,\mu_2+\frac{D_w}{2}+C_2 \}$, we conclude that $n^m_h=w^m_h=0$, that is, $n^m_{h,1}=n^m_{h,2}$ and $w^m_{h,1}=w^m_{h,2}$. Now, knowing the existence and uniqueness of $[n^m_h,w^m_h]\in N_h\times W_h$, we have that there exists a unique $[c^m_h,\mathbf{s}^{m}_h,\mathbf{u}^m_h,\pi_h^m]\in  C_h \times {{\Sigma}}_h \times {U}_h\times \Pi_h$ solution of (\ref{scheme1})$_{c)}$-(\ref{scheme1})$_{f)}$. In fact, 
assuming that  there exist $[c^m_{h,1},\mathbf{s}^{m}_{h,1},\mathbf{u}^m_{h,1},\pi_{h,1}^m], [c^m_{h,2},\mathbf{s}^{m}_{h,2},\mathbf{u}^m_{h,2},\pi_{h,2}^m]\in C_h \times {{\Sigma}}_h \times {U}_h\times \Pi_h$ two possible solutions; then defining $c^m_{h}=c^m_{h,1}- c^m_{h,2}$, $\mathbf{s}^{m}_{h}=\mathbf{s}^{m}_{h,1}-\mathbf{s}^{m}_{h,2}$ $\mathbf{u}^m_{h}=\mathbf{u}^m_{h,1}-\mathbf{u}^m_{h,2}$ and $\pi_{h}^m=\pi_{h,1}^m - \pi_{h,2}^m$,
we get that $[c^m_h,\mathbf{s}^{m}_h,\mathbf{u}^m_h,\pi_h^m]\in C_h \times {{\Sigma}}_h \times {U}_h\times \Pi_h$ satisfies
\begin{equation}
\left\{
\begin{array}
[c]{lll}%
\vspace{0.1 cm}
\displaystyle
( \mathbf{s}^m_h,\bar{\mathbf{s}})+\Delta t D_c(\nabla \cdot \mathbf{s}^m_h,\nabla \cdot \bar{\mathbf{s}})+\Delta t D_c (\mbox{rot}(\mathbf{s}^m_h),\mbox{rot}(\bar{\mathbf{s}}))= \Delta t(\mathbf{u}^m_h\cdot \mathbf{s}^{m-1}_h,\nabla\cdot\bar{\mathbf{s}}),\\\vspace{0.1 cm}
\displaystyle
(c^m_h,\bar{c}) + \Delta tD_c (\nabla c^m_h,\nabla \bar{c})+\Delta t A(\mathbf{u}^{m-1}_h,c^m_h,\bar{c})=-\Delta t\alpha(c^{m}_h[n^{m}_{h}]_{+},\bar{c}))-\Delta t\beta(c^{m}_h[w^{m}_{h}]_{+},\bar{c}),\\\vspace{0.1 cm} 
(\mathbf{u}^m_h,\bar{\mathbf{u}})+k\Delta t B(\mathbf{u}^{m-1}_h,\mathbf{u}^m_h,\bar{\mathbf{u}})+\Delta t D_{\mathbf{u}} (\nabla \mathbf{u}^m_h,\nabla \bar{\mathbf{u}})-\Delta t(\pi^m_h,\nabla \cdot \bar{\mathbf{u}}) =0,\\
(\bar{\pi},\nabla \cdot {\mathbf{u}_h^m}) =0,
\end{array}
\right.  \label{modelf02clin}
\end{equation}
for all $[\bar{c},\bar{\mathbf{s}},\bar{\mathbf{u}},\bar{\pi}] \in  C_h \times {{\Sigma}}_h \times {U}_h\times \Pi_h$. Taking $\bar{\bf u}={\bf u}^m_h$ in (\ref{modelf02clin})$_3$ and $\bar{\pi}=\Delta t{\pi}^m_h$ in (\ref{modelf02clin})$_4$, adding the resulting equations and using (\ref{a7}), we deduce that ${\bf u}^m_h={\boldsymbol 0}$; from which, using the discrete {\it inf-sup} condition (\ref{LBB}), we deduce that $\pi^m_h=0$. Moreover, using that ${\bf u}^m_h={\boldsymbol 0}$ in (\ref{modelf02clin})$_1$, and testing by $\bar{\bf s}={\bf s}^m_h$, we get ${\bf s}^m_h={\boldsymbol 0}$. Finally, taking $\bar{c}=c^m_h$ in (\ref{modelf02clin})$_2$, using (\ref{a8}) and the fact that $(c^{m}_{h}[n^{m}_{h}]_{+},c^{m}_{h})=(|c^{m}_{h}|^{2},[n^{m}_{h}]_{+})\geq0$ and $(c^{m}_{h}[w^{m}_{h}]_{+},c^{m}_{h})=(|c^{m}_{h}|^{2},[w^{m}_{h}]_{+})\geq0$ (since $[n^{m}_{h}]_{+},[w^{m}_{h}]_{+}\geq 0$), we conclude that $c^m_h=0$. Thus, using again that (\ref{scheme1})$_{c)}$-(\ref{scheme1})$_{f)}$ is an linear algebraic system, we conclude the proof.
\end{proof}

Now, in order to obtain some uniform estimates for the discrete variables,  we will use the following discrete Gronwall lemma:
 \begin{lemm}\label{Diego2} (\cite[p. 369]{Hey})
 Assume that $\Delta t>0$ and $B,b^k, d^k, g^k, h^k\ge 0$ satisfy:
 	\begin{equation*}\label{e-Diego2-1}
 	d^{m+1}+ \Delta t\sum_{k=0}^{m} b^{k+1}  \le \Delta t \sum_{k=0}^{m} g^k \, d^k +\Delta t\sum_{k=0}^{m} h^k + B,  \quad \forall {m} \ge 0.
 	\end{equation*}
 	Then, it holds
 	\begin{equation*}\label{e-Diego2-2}
 	d^{m+1} + \Delta t \, \sum_{k=0}^{m} b^{k+1} \le \exp \left( \Delta t \, \sum_{k=0}^{m} g^k\right)
 	\, \left( \Delta t \, \sum_{k=0}^{m} h^k + B \right), \quad \forall {m} \ge 0.
 	\end{equation*}
 \end{lemm}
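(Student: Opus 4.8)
The plan is to prove the stated inequality by the standard discrete Gronwall argument: fold the non-negative dissipation terms into a single quantity, iterate the resulting scalar affine recursion, and bound the products that appear by an exponential.

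First I would set $y^{m+1}:=d^{m+1}+\Delta t\sum_{k=0}^{m}b^{k+1}$ for $m\ge0$ and $y^{0}:=d^{0}$, together with $P^{m}:=\Delta t\sum_{k=0}^{m}h^{k}+B$ (non-decreasing in $m$) and the convention $P^{-1}:=B$. Since every $b^{k}\ge0$ we have $d^{k}\le y^{k}$, so the hypothesis gives
\begin{equation*}
y^{m+1}\le\Delta t\sum_{k=0}^{m}g^{k}y^{k}+P^{m},\qquad m\ge0,
\end{equation*}
and the $m=-1$ instance of the hypothesis (empty sums being zero) records $y^{0}=d^{0}\le B=P^{-1}$. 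Since $y^{m+1}=d^{m+1}+\Delta t\sum_{k=0}^{m}b^{k+1}$ and $P^{m}=\Delta t\sum_{k=0}^{m}h^{k}+B$, it suffices to prove $y^{m+1}\le P^{m}\exp\!\big(\Delta t\sum_{k=0}^{m}g^{k}\big)$ for all $m\ge0$.

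Next I would introduce the right-hand sides $S^{m}:=\Delta t\sum_{k=0}^{m}g^{k}y^{k}+P^{m}$ for $m\ge0$ and $S^{-1}:=B$, so that $y^{m+1}\le S^{m}$ for $m\ge0$ and $y^{0}\le S^{-1}$. From $S^{m}-S^{m-1}=\Delta t\,g^{m}y^{m}+(P^{m}-P^{m-1})$ and the bound $y^{m}\le S^{m-1}$ (valid also at $m=0$), one obtains the affine recursion
\begin{equation*}
S^{m}\le(1+\Delta t\,g^{m})\,S^{m-1}+(P^{m}-P^{m-1}),\qquad m\ge0,
\end{equation*}
which, unrolled from $S^{-1}=B$, yields
\begin{equation*}
S^{m}\le B\prod_{j=0}^{m}(1+\Delta t\,g^{j})+\sum_{i=0}^{m}(P^{i}-P^{i-1})\prod_{j=i+1}^{m}(1+\Delta t\,g^{j}).
\end{equation*}

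Finally I would estimate every partial product by $\prod_{j=0}^{m}(1+\Delta t\,g^{j})\le\exp\!\big(\Delta t\sum_{j=0}^{m}g^{j}\big)$ (using $1+x\le e^{x}$ and that each factor is $\ge1$) and use that the increments $P^{i}-P^{i-1}=\Delta t\,h^{i}\ge0$ telescope, $\sum_{i=0}^{m}(P^{i}-P^{i-1})=P^{m}-P^{-1}=P^{m}-B$. This gives $S^{m}\le\exp\!\big(\Delta t\sum_{j=0}^{m}g^{j}\big)\big(B+(P^{m}-B)\big)=P^{m}\exp\!\big(\Delta t\sum_{j=0}^{m}g^{j}\big)$, and hence $y^{m+1}\le S^{m}\le P^{m}\exp(\Delta t\sum_{j=0}^{m}g^{j})$, which is exactly the claimed estimate. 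There is no genuine obstacle here, as this is a classical estimate; the only points needing care are the index shift between $d^{m+1}$ on the left and the $d^{k}$ on the right (handled through the auxiliary sequence $y^{m}$ and the bound $d^{0}\le B$) and the handling of the inhomogeneous term $h^{k}$, which must be propagated through the iteration by a discrete summation-by-parts / telescoping argument rather than by a purely multiplicative Gronwall step.
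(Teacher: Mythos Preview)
Your argument is the standard discrete Gronwall iteration and is correct. The paper does not supply its own proof of this lemma; it merely cites Heywood--Rannacher \cite[p.~369]{Hey}, so there is nothing to compare against.

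One remark on the point you yourself flag: you obtain $d^{0}\le B$ from ``the $m=-1$ instance of the hypothesis,'' but the hypothesis is stated only for $m\ge 0$. This is not a defect in your reasoning so much as a defect in the statement as written: without the bound $d^{0}\le B$ the conclusion is actually false (take $\Delta t=1$, $B=0$, all $b^{k}=h^{k}=0$, $g^{0}=1$, $g^{k}=0$ for $k\ge 1$, and $d^{k}=1$ for all $k$; the hypothesis holds for every $m\ge 0$, yet the conclusion at $m=0$ reads $1\le 0$). In every application in the paper one has $d^{0}=0$, so the issue never surfaces, and your reading of the lemma --- extending the hypothesis to $m=-1$ with empty sums, equivalently assuming $d^{0}\le B$ --- is exactly the intended one.
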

 
Some uniform weak estimates for the discrete  variables $n^m_h$ and $w^m_h$} are necessary in the convergence analysis. They are proved in the following result. 
\begin{lemm}{\bf (Uniform estimates for $n_h^m$ and $w^m_h$)}\label{uen} 
Assume the inductive hypothesis (\ref{IndHyp}). There exists a constant $C$ such that if $\Delta t C\leq \frac{1}{2}$, then $n^m_h$ and $w^m_h$ are bounded in $ l^{\infty}(L^2)\cap l^{2}(H^1)$.
\end{lemm}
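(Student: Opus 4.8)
The plan is to obtain a discrete energy estimate by testing the density equations with the densities themselves and then to close with the discrete Gronwall inequality of Lemma~\ref{Diego2}. First I would take $\bar n=n^m_h$ in \eqref{scheme1}$_{a)}$ and $\bar w=w^m_h$ in \eqref{scheme1}$_{b)}$. Using the elementary identity $(\delta_t a^m_h,a^m_h)=\frac{1}{2\Delta t}\bigl(\Vert a^m_h\Vert_{L^2}^2-\Vert a^{m-1}_h\Vert_{L^2}^2+\Vert a^m_h-a^{m-1}_h\Vert_{L^2}^2\bigr)$, the skew-symmetry property \eqref{a8} (which kills the convective term, $A(\mathbf{u}^{m-1}_h,n^m_h,n^m_h)=0$), and the sign conditions $(n^m_h[n^{m-1}_h]_+,n^m_h)=(|n^m_h|^2,[n^{m-1}_h]_+)\ge0$ and $(n^m_h[w^{m-1}_h]_+,n^m_h)\ge0$ (these terms carry a minus sign and are therefore dropped), one is left with
\begin{equation*}
\frac{1}{2\Delta t}\bigl(\Vert n^m_h\Vert_{L^2}^2-\Vert n^{m-1}_h\Vert_{L^2}^2\bigr)+D_n\Vert\nabla n^m_h\Vert_{L^2}^2\le\chi_1(n^m_h\mathbf{s}^{m-1}_h,\nabla n^m_h)+\mu_1\Vert n^m_h\Vert_{L^2}^2,
\end{equation*}
together with the analogous inequality for $w^m_h$.

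The only genuinely nonlinear term is the chemoattraction term, and this is exactly where the inductive hypothesis \eqref{IndHyp} enters. Arguing as in the proof of Theorem~\ref{WPs}, by the generalized H\"older inequality with exponents $5$, $10/3$, $2$, the 3D interpolation inequality \eqref{in3Dl4}, and \eqref{IndHyp},
\begin{equation*}
\chi_1(n^m_h\mathbf{s}^{m-1}_h,\nabla n^m_h)\le\chi_1\Vert n^m_h\Vert_{L^5}\Vert\mathbf{s}^{m-1}_h\Vert_{L^{10/3}}\Vert\nabla n^m_h\Vert_{L^2}\le\chi_1 K\,\Vert n^m_h\Vert_{L^2}^{1/10}\Vert n^m_h\Vert_{H^1}^{9/10}\Vert\nabla n^m_h\Vert_{L^2}.
\end{equation*}
Since $\Vert\nabla n^m_h\Vert_{L^2}\le\Vert n^m_h\Vert_{H^1}$, the right-hand side is bounded by $\chi_1 K\,\Vert n^m_h\Vert_{L^2}^{1/10}\Vert n^m_h\Vert_{H^1}^{19/10}$, a quantity of total weight $2$; Young's inequality (conjugate exponents $20/19$ and $20$) then gives, for any $\delta>0$, a bound $\delta\Vert\nabla n^m_h\Vert_{L^2}^2+C_\delta\Vert n^m_h\Vert_{L^2}^2$. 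Choosing $\delta=D_n/2$, absorbing the gradient term and multiplying by $2\Delta t$ yields
\begin{equation*}
\Vert n^m_h\Vert_{L^2}^2-\Vert n^{m-1}_h\Vert_{L^2}^2+\Delta t\,D_n\Vert\nabla n^m_h\Vert_{L^2}^2\le C\Delta t\,\Vert n^m_h\Vert_{L^2}^2,
\end{equation*}
and, likewise, $\Vert w^m_h\Vert_{L^2}^2-\Vert w^{m-1}_h\Vert_{L^2}^2+\Delta t\,D_w\Vert\nabla w^m_h\Vert_{L^2}^2\le C\Delta t\,\Vert w^m_h\Vert_{L^2}^2$.

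To conclude, I would sum these inequalities from the first time level, bound $\Vert n^0_h\Vert_{L^2}$ and $\Vert w^0_h\Vert_{L^2}$ by the data using the stability of the projections \eqref{aprox01-aNN}, and invoke Lemma~\ref{Diego2}. The smallness condition $\Delta t\,C\le\frac12$ is used precisely to absorb the top-index contribution $C\Delta t\Vert n^m_h\Vert_{L^2}^2$ into the left-hand side (leaving a factor $\tfrac12$), which puts the estimate in exactly the form of the hypothesis of Lemma~\ref{Diego2} with $d^k=\Vert n^k_h\Vert_{L^2}^2$, $b^{k+1}=\Vert\nabla n^{k+1}_h\Vert_{L^2}^2$, $g^k\equiv C$, $h^k\equiv0$, $B=2\Vert n^0_h\Vert_{L^2}^2$. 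The lemma then gives $\Vert n^m_h\Vert_{L^2}^2+\Delta t\sum_{k}\Vert\nabla n^k_h\Vert_{L^2}^2\le C$ uniformly in $m$ and $h$, and the same for $w^m_h$; combining the resulting $l^\infty(L^2)$ bound with the $l^2$ bound for the gradients gives boundedness of $n^m_h$ and $w^m_h$ in $l^\infty(L^2)\cap l^2(H^1)$.

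The main obstacle is the uniform control of $\chi_1(n^m_h\mathbf{s}^{m-1}_h,\nabla n^m_h)$: without a priori information that $\mathbf{s}^{m-1}_h$ stays bounded in a norm strong enough to be paired against $\nabla n^m_h$, the energy estimate cannot be closed, which is the very reason the inductive hypothesis \eqref{IndHyp} is imposed and then verified a posteriori after the convergence analysis. Moreover, the exponent bookkeeping (the $5$--$10/3$--$2$ H\"older split together with the $9/10$--$1/10$ interpolation split in \eqref{in3Dl4}) must be arranged so that the power of $\Vert\nabla n^m_h\Vert_{L^2}$ appearing on the right-hand side is exactly $2$, and hence absorbable into $D_n\Vert\nabla n^m_h\Vert_{L^2}^2$. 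Everything else is routine.
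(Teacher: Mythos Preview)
Your proof is correct and follows essentially the same approach as the paper: test \eqref{scheme1}$_{a)}$ by $n^m_h$, use the skew-symmetry \eqref{a8} and the sign of the truncated terms, control the chemoattraction term via the $5$--$10/3$--$2$ H\"older split, the interpolation \eqref{in3Dl4} and the inductive hypothesis \eqref{IndHyp}, absorb the gradient with Young, sum in $m$, and close with the discrete Gronwall Lemma~\ref{Diego2} under the smallness condition on $\Delta t$. The paper's argument differs only in cosmetic normalization (it tests by $2\Delta t\,n^m_h$ rather than $n^m_h$) and in the precise way the Young step is written out, but the structure and the key estimates are identical.
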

\begin{proof}
Testing (\ref{scheme1})$_a$ by $\bar{n}=2\Delta t n^m_h$, using the equality $(a-b,2a)=\vert a\vert^2-\vert b\vert^2+\vert a-b\vert^2$ and  the property (\ref{a8}), we have  
\begin{eqnarray}\label{nh1-p}
&\Vert&\!\!\!\!\!\!n^m_h\Vert^2_{L^2}-\Vert n^{m-1}_h \Vert^2_{L^2}+ \Vert n^m_h - n^{m-1}_h\Vert^2_{L^2}+{2\Delta t D_n }\Vert\nabla n^m_h\Vert^2_{L^2}= 2 \Delta t \chi_1(n^{m}_h{\bf s}^{m-1}_h,\nabla n^m_h)\nonumber\\
&&+ 2\Delta t \mu_1 \Vert n^m_h\Vert_{L^2}^2 -2\Delta t\mu_1(n_h^m[n_h^{m-1}]_+,n_h^m)-2a_1\Delta t\mu_1(n_h^m[w_h^{m-1}]_+,n_h^m),
\end{eqnarray}
and taking into account that the terms $([n_h^{m-1}]_+,\vert n_h^m\vert^2)$ and $([w_h^{m-1}]_+,\vert n_h^m\vert^2)$ are nonnegative, we get
\begin{equation}\label{nh1}
\Vert n^m_h\Vert^2_{L^2}-\Vert n^{m-1}_h \Vert^2_{L^2}+ \Vert n^m_h - n^{m-1}_h\Vert^2_{L^2}+{2\Delta t D_n}\Vert\nabla n^m_h\Vert^2_{L^2}\leq 2 \Delta t \chi_1\vert (n^{m}_h{\bf s}^{m-1}_h,\nabla n^m_h)\vert+ 2\Delta t \mu_1 \Vert n^m_h\Vert_{L^2}^2.
\end{equation}
Using the H\"older and Young inequalities,  the 3D interpolation inequality (\ref{in3Dl4}) and the inductive hypothesis (\ref{IndHyp}), we obtain
\begin{equation}\label{nh2}
2 \Delta t \chi_1\vert (n^{m}_h{\bf s}^{m-1}_h,\nabla n^m_h)\vert \leq  {C\Delta t \chi_1  \Vert n^{m}_h  \Vert_{L^5} \Vert   {\bf s}^{m-1}_h\Vert_{L^{10/3}} \Vert \nabla n^m_h\Vert_{L^2} }  \leq \Delta t D_n \Vert \nabla n^m_h\Vert^2_{L^2}+ C\Delta t  \Vert n^{m}_h \Vert_{L^2}^2.
\end{equation}
Thus, using (\ref{nh2}) in (\ref{nh1}), and adding the resulting expression  from $m=1$ to $m=r$, we arrive at
 \begin{equation}\label{nh3-a}
 \Vert n^r_h\Vert^2_{L^2}
 +\Delta t\sum_{m=1}^r D_n\Vert\nabla n^m_h\Vert^2_{L^2} 
 \leq \|n^0_h\|^2_{L^2}+
 C\Delta t\sum_{m=1}^{r-1} \Vert n^{m}_h \Vert_{L^2}^2+
 C\Delta t \Vert n^{r}_h \Vert_{L^2}^2.
 \end{equation}
Therefore, if $\Delta t$ is small enough such that $\displaystyle \frac{1}{2}-C\Delta t\geq 0$,  applying Lemma \ref{Diego2} to (\ref{nh3-a}) we deduce
\begin{equation*}
\Vert n^r_h\Vert^2_{L^2} +
{\Delta t}\sum_{m=1}^{r}D_n \Vert n^m_h\Vert^2_{H^1}
\leq  C, \ \ \forall m\geq1, 
\end{equation*}
where the constant $C>0$ depends on the data $(\chi_1,\mu_1, n_0,T)$, but is independent of $(\Delta t, h)$ and $r$; and we conclude that $n^m_h$ is bounded in $ l^{\infty}(L^2)\cap l^{2}(H^1)$. The proof for $w^m_h$ follows analogously.
\end{proof}

\section{Error estimates}\label{ESWN}
The aim of this subsection is to obtain optimal error estimates for any solution $[n^m_h,w^m_h, c^m_h,{\bf s}^m_h,\mathbf{u}^m_h,\pi^m_h]$ of the scheme (\ref{scheme1}), with respect to a su\-ffi\-cien\-tly regular solution $[n,w, c,{\bf s},\mathbf{u},\pi]$ of  (\ref{Chemoweak3}). 
We start by introducing the following notations for the errors at $t=t_{m}$:  $e_n^m=n^m-n^m_h$, $e_w^m=w^m-w^m_h$, $e_c^m=c^m-c^m_h$, $e_{\bf s}^m={\bf s}^m-{\bf s}^m_h$, $e_{\mathbf{u}}^m=\mathbf{u}^m-\mathbf{u}^m_h$ and $e_\pi^m=\pi^m-\pi^m_h$, where $a^{m}$ {denotes}, in general, the value of $a$ at time $t_{m}$. Then, subtracting the scheme (\ref{scheme1}) to (\ref{Chemoweak3}) at $t=t_m$, we obtain that $[e_n^m,e_w^m,e^m_c,e_{{\bf s}}^m,e^m_{\mathbf{u}},e^m_\pi]$ satisfies
\begin{eqnarray}\label{errn}
&(\delta_t e_n^m,&\!\!\!\!\bar{n}) + D_n (\nabla e_n^m, \nabla \bar{n})+A(\mathbf{u}^m-\mathbf{u}^{m-1},n^m,\bar{n})  +A(\mathbf{u}^{m-1}_h,e^m_n,\bar{n}) + A(e^{m-1}_{\mathbf{u}},n^m,\bar{n})=(\rho_n^m,\bar{n})\nonumber\\
&&\!\!\!\!\!\!\!\!\!\!\!\!\!\!\!\!\!
+\mu_1(e^m_{n}(1-[n^{m-1}_{h}]_{+}-a_{1}[w^{m-1}_{h}]_{+})-n^{m}(n^m-n^{m-1}+ a_1(w^m-w^{m-1}) + n^{m-1} - [n^{m-1}_h]_+),\bar{n})\nonumber\\
&&\!\!\!\!\!\!\!\!\!\!\!\!\!\!\!\!\! - \mu_1 a_1(n^m (w^{m-1} - [w^{m-1}_h]_+),\bar{n})
+\chi_1(n^{m}(\mathbf{s}^m-\mathbf{s}^{m-1}+e_{\mathbf{s}}^{m-1})+e^{m}_{n}\mathbf{s}^{m-1}_{h},\nabla\bar{n}),
\end{eqnarray}
\begin{eqnarray}\label{errw}
&(\delta_t e_w^m,&\!\!\!\!\bar{w}) + D_w(\nabla e_w^m, \nabla \bar{w})+A(\mathbf{u}^m-\mathbf{u}^{m-1},w^m,\bar{w})  +A(\mathbf{u}^{m-1}_h,e^m_w,\bar{w}) + A(e^{m-1}_{\mathbf{u}},w^m,\bar{w})=(\rho_w^m,\bar{w})\nonumber\\
&&\!\!\!\!\!\!\!\!\!\!\!\!\!\!\!\!\!
+\mu_2(e^m_{w}(1-a_2[n^{m-1}_{h}]_{+}-[w^{m-1}_{h}]_{+})-w^{m}(w^m-w^{m-1} + a_2(n^m-n^{m-1})+ w^{m-1} - [w^{m-1}_h]_+),\bar{w})\nonumber\\
&&\!\!\!\!\!\!\!\!\!\!\!\!\!\!\!\!\! - \mu_2 a_2(w^m (n^{m-1} - [n^{m-1}_h]_+),\bar{w})
+\chi_2(w^{m}(\mathbf{s}^m-\mathbf{s}^{m-1}+e_{\mathbf{s}}^{m-1})+e^{m}_{w}\mathbf{s}^{m-1}_{h},\nabla\bar{w}),
\end{eqnarray}
\begin{eqnarray}\label{errs}
&\left(\delta_t e_{\bf s}^m,\bar{\bf s}\right)&\!\!\!\!+D_c (\nabla \cdot e_{\bf s}^m, \nabla \cdot\bar{\bf s}) + D_c(\mbox{rot}(e_{\bf s}^m), \mbox{rot}(\bar{\bf s}))  = (\rho_{\bf s}^m,\bar{\bf s})+((\mathbf{s}^m-\mathbf{s}^{m-1})\cdot {\bf u}^m, \nabla \cdot\bar{\bf s}) \nonumber\\
&&\!\!\!\!\!\! +  (e^{m-1}_{\mathbf{s}}\cdot{\bf u}^{m}+e^{m}_{\bf u}\cdot\mathbf{s}^{m-1}_h+(\alpha n^{m}+\beta w^{m})(c^m-c^{m-1})+(\alpha n^{m}+\beta w^{m})e_{c}^{m-1},\nabla \cdot\bar{\bf s})\nonumber\\
&&\!\!\!\!\!\! + ((\alpha e_{n}^{m}+\beta e_{w}^{m})c^{m-1}_{h},\nabla \cdot\bar{\bf s}),
\end{eqnarray}
\begin{eqnarray}\label{errc}
&\left(\delta_t e_c^m,\bar{c}\right) &\!\!\!\!+ D_c(\nabla e_c^m, \nabla \bar{c})+A(\mathbf{u}^m-\mathbf{u}^{m-1},c^m,\bar{c})  +A(\mathbf{u}^{m-1}_h,e^m_c,\bar{c}) + A(e^{m-1}_{\mathbf{u}},c^m,\bar{c})=(\rho_c^m,\bar{c})\nonumber\\
&&\hspace{-0.5 cm}  -\alpha(c^{m}(n^m - [n^{m}_h]_+)+e_{c}^{m}[n^{m}_h]_+,\bar{c})
 -\beta(c^{m}(w^m - [w^{m}_h]_+)+e_{c}^{m}[w^{m}_h]_+,\bar{c}),
\end{eqnarray}
\begin{eqnarray}\label{erru}
&\left(\delta_t e_{\mathbf{u}}^m,\bar{\mathbf{u}}\right) &\!\!\!\!+ D_{\mathbf{u}}(\nabla e_{\mathbf{u}}^m, \nabla \bar{\mathbf{u}})= (\rho_{\mathbf{u}}^m,\bar{\mathbf{u}}) - kB(\mathbf{u}^m-\mathbf{u}^{m-1},{\mathbf{u}}^m,\bar{\mathbf{u}})   -kB(e^{m-1}_{\mathbf{u}},{\mathbf{u}}^m,\bar{\mathbf{u}})\nonumber\\
&&\hspace{-0.8 cm} -kB(\mathbf{u}^{m-1}_h,e^m_{\mathbf{u}},\bar{\mathbf{u}}) +(e^m_\pi,\nabla \cdot \bar{\mathbf{u}})+((\gamma e^{m}_{n}+\lambda e^{m}_{w})\nabla\phi,\bar{\mathbf{u}}),
\end{eqnarray}
\begin{equation}\label{errpi}
(\nabla \cdot e^m_{\mathbf{u}},\bar{\pi})=0,
\end{equation}
for all $[\bar{n},\bar{w},\bar{c},\bar{\bf s},\bar{\mathbf{u}},\bar{\pi}]\in N_{h} \times W_{h} \times C_h \times {{\Sigma}}_h \times {U}_h\times \Pi_h$, where $\rho_n^m=\delta_t n^m - (\partial_t n)^m$ and so on. Now, taking into account the interpolation operators $\mathbb{P}_n$, $\mathbb{P}_w$, $\mathbb{P}_c$, $\mathbb{P}_{\bf s}$, $\mathbb{P}_{\mathbf{u}}$ and $\mathbb{P}_{\pi}$ defined in (\ref{StokesOp}) and (\ref{Interp1New}), we decompose the total errors $e_n^m$, $e_w^m$, $e^m_c$, $e_{\bf s}^m$, $e_{\mathbf{u}}^m$ and $e^m_{\pi}$ as follows
\begin{eqnarray}
e_n^m&=&(n^m -\mathbb{P}_n n^m) + ( \mathbb{P}_n n^m - n^m_h )
= \theta^m_n+\xi^m_n, \label{u1an}\\
e_w^m&=&(w^m -\mathbb{P}_w w^m) + ( \mathbb{P}_w w^m - w^m_h )
= \theta^m_w+\xi^m_w, \label{u1aw}\\
e_c^m&=&(c^m -\mathbb{P}_c c^m) + ( \mathbb{P}_c c^m - c^m_h )=\theta^m_c+\xi^m_c,\label{u1ac}\\
e_{\bf s}^m&=&({\bf s}^m -\mathbb{P}_{\bf s} {\bf s}^m) + ( \mathbb{P}_{\bf s} {\bf s}^m - {\bf s}^m_h )=\theta^m_{\bf s}+\xi^m_{\bf s}, \label{u1asig}\\
e_{\mathbf{u}}^m&=&(\mathbf{u}^m -\mathbb{P}_{\mathbf{u}} \mathbf{u}^m) + ( \mathbb{P}_{\mathbf{u}} \mathbf{u}^m - \mathbf{u}^m_h )=\theta^m_{\mathbf{u}}+\xi^m_{\mathbf{u}},\label{u1au}\\
e_\pi^m&=&(\pi^m -\mathbb{P}_\pi \pi^m) + ( \mathbb{P}_\pi \pi^m - \pi^m_h )=\theta^m_\pi+\xi^m_\pi,\label{u1api}
\end{eqnarray}
where, in general, $\theta^m_{a}$ and $\xi^m_{a}$ denote the interpolation and discrete errors with respect to the $a$ variable, respectively. Recalling that the interpolation errors satisfy (\ref{StabStk1}) and (\ref{aprox01}), the aim of this section is to prove the following result:
 \begin{theo}\label{theo1N}
	Assume (\ref{IndHyp}). There exists a constant $C>0$ (depending on the data of the problem (\ref{Chemoweak3})) such that if $\Delta t C \leq  \frac{1}{2}$,  the following estimates for the discrete errors hold
\begin{eqnarray}\label{EEtheo1}
\|[\xi^m_n,\xi^m_w,\xi^m_c,\xi^m_{\mathbf{u}},\xi^m_{\boldsymbol{s}}]\|_{l^{\infty}(L^2)\cap l^2(H^1)}\!\!\!& \leq&\!\!\! C(T) \Big(\Delta t +\max\{h^{r_1+1},h^{r_2+1},h^{r_3+1},h^{r_4+1},
h^{r+1} \}\Big),\\
\|\xi^m_{\mathbf{u}}\|_{l^{\infty}(H^1)\cap l^2(W^{1,6})} + \|\xi^m_{\pi}\|_{l^{2}(L^6)} \!\!
\!& \leq&\!\!\!  C(T) \Big(\Delta t +\max\{h^{r_1+1},h^{r_2+1},h^{r_3+1},h^{r_4+1},
	h^{r}\}\Big),\label{EEtheo2}
\end{eqnarray}
	where the constant $C(T)>0$ is independent of $m, \Delta t$ and $h$.
\end{theo}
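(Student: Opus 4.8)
The plan is to derive a discrete energy inequality for the discrete errors $\xi^m_{\bullet}$ by testing the error equations \eqref{errn}--\eqref{errpi} with suitable multiples of the discrete errors themselves, then close the estimate using the discrete Gronwall Lemma \ref{Diego2}. First I would choose the test functions $\bar n = 2\Delta t\,\xi^m_n$, $\bar w = 2\Delta t\,\xi^m_w$, $\bar c = 2\Delta t\,\xi^m_c$, $\bar{\mathbf s}=2\Delta t\,\xi^m_{\mathbf s}$, $\bar{\mathbf u}=2\Delta t\,\xi^m_{\mathbf u}$ and $\bar\pi = 2\Delta t\,\xi^m_\pi$ in the respective error equations, so that the discrete time-derivative terms produce the telescoping structure $\|\xi^m_\bullet\|_{L^2}^2 - \|\xi^{m-1}_\bullet\|_{L^2}^2 + \|\xi^m_\bullet-\xi^{m-1}_\bullet\|_{L^2}^2$ via the identity $(a-b,2a)=|a|^2-|b|^2+|a-b|^2$, while the diffusion terms give the coercive contributions $2\Delta t D_n\|\nabla\xi^m_n\|_{L^2}^2$, etc. The interpolation-error parts $\theta^m_\bullet$ of the total errors are moved to the right-hand side and estimated using the approximation properties \eqref{StabStk1} and \eqref{aprox01}, which is what produces the powers $h^{r_i+1}$ (and $h^r$, $h^{r+1}$ for the Stokes part); the consistency errors $\rho^m_\bullet=\delta_t(\cdot)^m-(\partial_t\,\cdot)^m$ contribute the $\Delta t$ term through the standard bound $\|\rho^m_\bullet\|\le C\Delta t^{1/2}\,\|\partial_{tt}(\cdot)\|_{L^2(t_{m-1},t_m;\cdot)}$ after summing in $m$.

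Next I would treat the various nonlinear and coupling terms. The key structural facts are the skew-symmetry properties \eqref{a7}--\eqref{a8}, which kill the terms $A(\mathbf u^{m-1}_h,\xi^m_n,\xi^m_n)$, $A(\mathbf u^{m-1}_h,\xi^m_c,\xi^m_c)$ and $kB(\mathbf u^{m-1}_h,\xi^m_{\mathbf u},\xi^m_{\mathbf u})$, so the potentially dangerous convective terms involving the discrete velocity disappear. The remaining transport-type terms $A(e^{m-1}_{\mathbf u},n^m,\bar n)$, $A(\mathbf u^m-\mathbf u^{m-1},n^m,\bar n)$, $kB(e^{m-1}_{\mathbf u},\mathbf u^m,\bar{\mathbf u})$ and so on are bounded using Hölder with the regularity of the exact solution $[n,w,c,\mathbf s,\mathbf u]$ (which by hypothesis is sufficiently regular, e.g. in $L^\infty(H^2)\cap\ldots$), the 3D interpolation inequalities \eqref{in3D}, \eqref{in3Dl4}, the embedding $H^1\hookrightarrow L^6$, and Young's inequality to absorb gradient factors into the coercive terms; the time-increments $\|\mathbf u^m-\mathbf u^{m-1}\|\le\Delta t^{1/2}\|\partial_t\mathbf u\|_{L^2(t_{m-1},t_m;\cdot)}$ again give $\Delta t$ contributions. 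The chemoattraction terms $\chi_1(n^m e^{m-1}_{\mathbf s}+e^m_n\mathbf s^{m-1}_h,\nabla\bar n)$ are handled exactly as in the well-posedness proof: write $e^{m-1}_{\mathbf s}=\theta^{m-1}_{\mathbf s}+\xi^{m-1}_{\mathbf s}$, use the inductive hypothesis \eqref{IndHyp} to control $\|\mathbf s^{m-1}_h\|_{L^{10/3}}$, and split $e^m_n=\theta^m_n+\xi^m_n$; the reaction and consumption terms, being (at most) quadratic, are controlled using the uniform bounds of Lemma \ref{uen} on $n^m_h,w^m_h$ in $l^\infty(L^2)\cap l^2(H^1)$ together with the positivity-truncation terms $([n^{m-1}_h]_+,|\cdot|^2)\ge0$, which provide sign-definite contributions that can be dropped.

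After summing \eqref{errn}--\eqref{errc} over $m$ from $1$ to $r$ (with \eqref{errpi} used to cancel the pressure term in the velocity equation, after adding $2\Delta t(\xi^m_\pi,\nabla\cdot\xi^m_{\mathbf u})=0$), all right-hand-side contributions take the form $C\Delta t\sum_{m}\|[\xi^m_n,\xi^m_w,\xi^m_c,\xi^m_{\mathbf u},\xi^m_{\mathbf s}]\|_{L^2}^2$ plus $C(T)(\Delta t+\max_i h^{r_i+1}+h^{r+1})^2$; choosing $\Delta t$ small enough that the diffusion coefficients absorb the gradient terms and $\tfrac12-C\Delta t\ge0$, Lemma \ref{Diego2} yields \eqref{EEtheo1}. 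For the stronger norms in \eqref{EEtheo2} I would, in a second stage, test the velocity error equation \eqref{erru} with $\bar{\mathbf u}=2\Delta t\,A\xi^m_{\mathbf u}$ (equivalently $-2\Delta t\,\mathbb P\Delta\xi^m_{\mathbf u}$) to obtain control of $\xi^m_{\mathbf u}$ in $l^\infty(H^1)\cap l^2(W^{1,6})$ via the discrete Stokes regularity and the already-established $l^2(H^1)$ bounds, and then recover $\|\xi^m_\pi\|_{l^2(L^6)}$ from the discrete inf-sup condition \eqref{LBB} applied to the velocity error equation. The main obstacle I expect is the bookkeeping of the low-integrability nonlinear terms — in particular matching the chemotaxis term $\chi_1(n^m e^{m-1}_{\mathbf s},\nabla\bar n)$ and the reaction terms against the available regularity without losing optimality in $h$ — which is exactly where the auxiliary variable $\mathbf s=\nabla c$, the inductive hypothesis \eqref{IndHyp}, and the interpolation inequality \eqref{in3Dl4} must be used in concert; a secondary difficulty is ensuring that the $\Delta t$-order consistency errors for the coupling terms are genuinely $O(\Delta t)$ and not $O(\Delta t^{1/2})$, which requires the second-time-derivative regularity of the exact solution guaranteed by the strong-solution framework of Definition \ref{strong} and Theorem \ref{teor3}.
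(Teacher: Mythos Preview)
Your plan for \eqref{EEtheo1} is essentially the paper's argument and is correct: test each error equation by the corresponding discrete error $\xi^m_\bullet$, exploit \eqref{a7}--\eqref{a8} to kill the convective terms carried by $\mathbf u^{m-1}_h$, use the inductive hypothesis \eqref{IndHyp} together with \eqref{in3Dl4} on the chemotaxis terms, invoke Lemma \ref{uen} for the reaction terms, sum in $m$, and apply Lemma \ref{Diego2}.

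For \eqref{EEtheo2}, however, there is a genuine gap. First, the test function $\bar{\mathbf u}=2\Delta t\,A\xi^m_{\mathbf u}$ is not admissible: $A$ is the continuous Stokes operator and $A\xi^m_{\mathbf u}$ need not lie in $U_h$. The paper instead tests \eqref{erru-int} with $\bar{\mathbf u}=\delta_t\xi^m_{\mathbf u}\in U_h$, which produces $\tfrac{D_{\mathbf u}}{2}\delta_t\|\nabla\xi^m_{\mathbf u}\|_{L^2}^2+\tfrac12\|\delta_t\xi^m_{\mathbf u}\|_{L^2}^2$ on the left. Second, the inf-sup condition \eqref{LBB} only yields $L^2$ control of the pressure; the $W^{1,6}\times L^6$ bound on $[\xi^m_{\mathbf u},\xi^m_\pi]$ comes instead from a discrete Stokes $W^{1,6}$-regularity result (Lemma 11 of \cite{GV}), applied to \eqref{erru-int}--\eqref{errpi-int} with right-hand side $f$ and $\delta_t\xi^m_{\mathbf u}$.

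The most serious omission is the handling of the convective term once skew-symmetry is lost: $B(\mathbf u^{m-1}_h,\xi^m_{\mathbf u},\delta_t\xi^m_{\mathbf u})$ does not vanish, and after writing $\mathbf u^{m-1}_h=\mathbb P_{\mathbf u}\mathbf u^{m-1}-\xi^{m-1}_{\mathbf u}$ one is left with a genuinely quadratic discrete-error contribution that, after Young, puts $\|\xi^{m-1}_{\mathbf u}\|_{H^1}^4+\|\xi^{m-1}_{\mathbf u}\|_{H^1}^2\|\xi^m_{\mathbf u}\|_{H^1}^2$ on the right-hand side. Gronwall cannot absorb this as it stands. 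The paper closes the argument by first proving the a priori bound
\[
\|\xi^{m-1}_{\mathbf u}\|_{H^1}^4+\|\xi^{m-1}_{\mathbf u}\|_{H^1}^2\|\xi^m_{\mathbf u}\|_{H^1}^2\le C
\]
via a dichotomy on the discrete parameters: if $h_{\max}^4/(\Delta t)^2\le C$ one uses the $l^2(H^1)$ part of \eqref{EEtheo1}, while if $(\Delta t)^4/h^4\le C$ one uses the $l^\infty(L^2)$ part of \eqref{EEtheo1} together with an inverse inequality; for $\Delta t,h\le 1$ at least one of the two alternatives always holds. Without this step the stronger estimate \eqref{EEtheo2} cannot be derived.
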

From decompositions (\ref{u1an})-(\ref{u1api}), Theorem \ref{theo1N} and interpolation errors (\ref{StabStk1}) and (\ref{aprox01}), we can deduce the following corollary.
\begin{coro}
	Under the assumptions of Theorem \ref{theo1N}, the following estimates for the total errors hold
	\begin{eqnarray*}\label{EEtheo1b}
	\|[e^m_n,e^m_w,e^m_c,e^m_{\mathbf{u}},e^m_{\boldsymbol{s}}]\|_{l^{\infty}(L^2)}\!\!\!& \leq&\!\!\!  C(T) \Big(\Delta t +\max\{h^{r_1+1},h^{r_2+1},h^{r_3+1},h^{r_4+1},
	h^{r+1} \}\Big),\\
		\|[e^m_n,e^m_w,e^m_c,e^m_{\mathbf{u}},e^m_{\boldsymbol{s}}]\|_{l^2(H^1)} \!\!\!& \leq&\!\!\!  C(T) \Big(\Delta t +\max\{h^{r_1},h^{r_2},h^{r_3},h^{r_4},
	h^{r} \}\Big),\\
	\|e^m_{\mathbf{u}}\|_{l^{\infty}(H^1)}  \!\!\!& \leq&\!\!\!  C(T) \Big(\Delta t +\max\{h^{r_1+1},h^{r_2+1},h^{r_3+1},h^{r_4+1},
	h^{r}\}\Big),
	\end{eqnarray*}
	where the constant $C(T)>0$ is independent of $m, \Delta t$ and $h$.
\end{coro}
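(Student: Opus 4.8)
The plan is to obtain each of the three inequalities directly from the error decompositions \eqref{u1an}--\eqref{u1api}, the triangle inequality, the interpolation and stability estimates \eqref{StabStk1} and \eqref{aprox01}, and the discrete-error bounds \eqref{EEtheo1}--\eqref{EEtheo2} already established in Theorem \ref{theo1N}; no new analytic ingredient is needed. For the $l^\infty(L^2)$ estimate I write $\|e^m_a\|_{L^2}\le\|\theta^m_a\|_{L^2}+\|\xi^m_a\|_{L^2}$ for each $a\in\{n,w,c,\mathbf{s},\mathbf{u}\}$, bound the interpolation part by $\|\theta^m_n\|_{L^2}\le Kh^{r_1+1}\|n^m\|_{H^{r_1+1}}$ and its analogues for $w,c,\mathbf{s}$ from \eqref{aprox01}, and by $\|\theta^m_{\mathbf{u}}\|_{L^2}\le Kh^{r+1}\|[\mathbf{u}^m,\pi^m]\|_{H^{r+1}\times H^r}$ from \eqref{StabStk1}, then take the supremum over $m$, using the assumed space--time regularity of $[n,w,c,\mathbf{s},\mathbf{u},\pi]$ to make these right-hand sides uniformly bounded; combining with \eqref{EEtheo1} for the $\xi$-terms gives the first inequality.

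For the $l^2(H^1)$ estimate I use the same splitting, but now the $H^1$-interpolation error costs one power of $h$ less: \eqref{aprox01} yields $\|\theta^m_n\|_{H^1}\le Kh^{r_1}\|n^m\|_{H^{r_1+1}}$ (and similarly for $w,c,\mathbf{s}$), while \eqref{StabStk1} yields $\|\theta^m_{\mathbf{u}}\|_{H^1}\le Kh^{r}\|[\mathbf{u}^m,\pi^m]\|_{H^{r+1}\times H^r}$. Multiplying the squared pointwise bounds by $\Delta t$ and summing over $m=1,\dots,N$ converts them into $l^2(H^1)$-bounds of order $h^{r_i}$ and $h^{r}$; adding the $\xi$-contribution of order $\Delta t+\max\{h^{r_1+1},\dots,h^{r+1}\}$ from \eqref{EEtheo1} and using $h^{r_i+1}\le h^{r_i}$ and $h^{r+1}\le h^{r}$ for $h\le1$ to absorb the subordinate powers, one obtains the stated order $\Delta t+\max\{h^{r_1},h^{r_2},h^{r_3},h^{r_4},h^{r}\}$.

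Finally, for the $l^\infty(H^1)$ estimate on $e^m_{\mathbf{u}}$ I take $\|e^m_{\mathbf{u}}\|_{H^1}\le\|\theta^m_{\mathbf{u}}\|_{H^1}+\|\xi^m_{\mathbf{u}}\|_{H^1}$, bound the first term by $Kh^{r}\|[\mathbf{u}^m,\pi^m]\|_{H^{r+1}\times H^r}$ uniformly in $m$ via \eqref{StabStk1}, and bound the second by \eqref{EEtheo2}; the sum is $C(T)(\Delta t+\max\{h^{r_1+1},h^{r_2+1},h^{r_3+1},h^{r_4+1},h^{r}\})$, as claimed. The only point demanding care throughout is the exponent bookkeeping -- matching the approximation order of each interpolation operator to the norm in which the corresponding error is measured, and discarding subordinate powers of $h$ when $h\le1$ -- so I do not expect any genuine obstacle: all the substantive work has already been done in Theorem \ref{theo1N}.
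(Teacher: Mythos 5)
Your proposal is correct and follows exactly the route the paper takes (the paper states the corollary as an immediate consequence of the decompositions (\ref{u1an})--(\ref{u1api}), Theorem \ref{theo1N}, and the interpolation bounds (\ref{StabStk1}), (\ref{aprox01}), without writing out the details). The exponent bookkeeping you perform — $h^{r_i+1}$ versus $h^{r_i}$ depending on the norm, and the $h^{r+1}$ gain for $\theta_{\mathbf{u}}$ in $L^2$ from the $1/h$ factor in (\ref{StabStk1}) — matches what is needed.
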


\subsection{Preliminary error estimates}
In this subsection, we will obtain some bounds for the discrete errors of the species densities $n$ and $w$, the chemical concentration $c,$ the flux ${\bf s}$ and the velocity $\mathbf{u}.$
\\

{\underline{{\it 1. Error estimates for the species densities $n$ and $w$}}}\\

Using (\ref{Interp1New})$_1$,  (\ref{u1an}) and (\ref{u1asig})-(\ref{u1au}) in (\ref{errn}), we have
\begin{eqnarray}\label{errn-int}
&(\delta_t \xi_n^m,&\!\!\!\!\bar{n}) + D_n(\nabla \xi_n^m, \nabla \bar{n})  +A(\mathbf{u}^{m-1}_h,\xi^m_n,\bar{n}) =(\rho_n^m,\bar{n})-(\delta_t \theta_n^m,\bar{n})+D_n(\theta_n^m,\bar{n}) -A(\mathbf{u}^{m-1}_h,\theta^m_n,\bar{n})\nonumber\\
&&\!\!\!\!\!\!\!\!\!\!\! 
-A(\mathbf{u}^m-\mathbf{u}^{m-1} +\xi^{m-1}_{\mathbf{u}}+\theta^{m-1}_{\mathbf{u}},n^m,\bar{n}) +\chi_1(n^{m}(\mathbf{s}^m-\mathbf{s}^{m-1}+\xi_{\mathbf{s}}^{m-1}+\theta_{\mathbf{s}}^{m-1}),\nabla\bar{n})\nonumber
\\
&&\!\!\!\!\!\!\!\!\!\!\! +\chi_1((\xi^{m}_{n}+\theta^{m}_{n})\mathbf{s}^{m-1}_{h},\nabla\bar{n})+\mu_1((\xi^m_{n}+\theta^m_{n})(1-[n^{m-1}_{h}]_{+}-a_{1}[w^{m-1}_{h}]_{+}),\bar{n})\nonumber\\
&&\!\!\!\!\!\!\!\!\!\!\!
-\mu_1(n^{m}(n^m-n^{m-1}+ n^{m-1} - [n^{m-1}_h]_+ +a_1(w^m-w^{m-1}+ w^{m-1} - [w^{m-1}_h]_+)),\bar{n}),
\end{eqnarray}
for all $\bar{n}\in N_h$. Taking $\bar{n}=\xi_n^m$ in (\ref{errn-int}) and using (\ref{a8}) we get
\begin{eqnarray}\label{errn-int2}
&\displaystyle\frac{1}{2}\delta_t\Vert \xi_n^m&\!\!\!\!\!\!\Vert_{L^2}^2 +\frac{\Delta t}{2}\Vert\delta_t \xi_n^m\Vert_{L^2}^2 +D_n \Vert \nabla\xi_n^m\Vert_{L^2}^2  =(\rho_n^m,\xi_{n}^m)+(D_n \theta_n^m - \delta_t \theta_n^m,\xi_{n}^m)-A(\mathbf{u}^{m-1}_h,\theta^m_n,\xi_{n}^m)\nonumber\\
&&\!\!\!\!\!\!\!\!\!\!\! 
-A(\mathbf{u}^m-\mathbf{u}^{m-1}+\xi^{m-1}_{\mathbf{u}}+\theta^{m-1}_{\mathbf{u}},n^m,\xi_{n}^m) +\mu_1((\xi^m_{n}+\theta^m_{n})(1-[n^{m-1}_{h}]_{+}-a_{1}[w^{m-1}_{h}]_{+}),\xi_{n}^m)\nonumber
\\
&&\!\!\!\!\!\!\!\!\!\!\! 
-\mu_1(n^{m}(n^m-n^{m-1}+ n^{m-1} - [n^{m-1}_h]_+ +a_1(w^m-w^{m-1}+ w^{m-1} - [w^{m-1}_h]_+)),\xi_{n}^m)\nonumber\\
&&\!\!\!\!\!\!\!\!\!\!\!
+\chi_1(n^{m}(\mathbf{s}^m-\mathbf{s}^{m-1}+\xi_{\mathbf{s}}^{m-1}+\theta_{\mathbf{s}}^{m-1}),\nabla\xi_{n}^m)+\chi_1((\xi^{m}_{n}+\theta^{m}_{n})\mathbf{s}^{m-1}_{h},\nabla\xi_{n}^m)=\sum_{k=1}^{8} I_k.
\end{eqnarray}
The terms on the right hand side of (\ref{errn-int2}) are controlled in the following way: First, using the H\"older and Young inequalities and (\ref{aprox01})$_1$, we get
\begin{eqnarray}\label{Ea1a}
&I_1 + I_2&\!\!\!\leq (\Vert \xi_n^m\Vert_{L^2}+ \Vert \nabla \xi_n^m\Vert_{L^2})\Vert \rho_n^m\Vert_{(H^1)'} + (\Vert(\mathcal{I} - \mathbb{P}_n) \delta_t n^m \Vert_{L^2}+D_n \Vert \theta_n^m\Vert_{L^2})\Vert \xi_n^m\Vert_{L^2} \nonumber\\
&&\!\!\!\leq \displaystyle\frac{D_n}{10}\Vert \nabla \xi_n^m\Vert_{L^2}^2 + \frac{1}{2}\Vert\xi_n^m\Vert_{L^2}^2+ C\Big(1+\frac{1}{D_n}\Big) \Vert \rho_n^m\Vert_{(H^1)'}^2 +C h^{2(r_1+1)}\Big[\Vert \delta_t n^m\Vert_{H^{r_1+1}}^2+D_n^2 \Vert n^m \Vert_{H^{r_1 +1}}^2\Big] \nonumber\\
&&\!\!\!\leq \displaystyle\frac{D_n}{10}\Vert \nabla \xi_n^m\Vert_{L^2}^2 +\frac{1}{2}\Vert \xi_n^m\Vert_{L^2}^2+C\Delta t \!\int_{t_{m-1}}^{t_m}\!\!\!\Vert \partial_{tt} n(t)\Vert_{(H^1)'}^2 dt \nonumber\\
&& +C h^{2(r_1+1)}\Big[\frac{1}{\Delta t}\int_{t_{m-1}}^{t_m}\!\!\!\Vert \partial_t n(t) \Vert_{H^{r_1+1}}^2  dt + \Vert n^m \Vert_{H^{r_1 +1}}^2\Big].
\end{eqnarray}
Moreover, from (\ref{a6aa}), the H\"older and Young inequalities, (\ref{ime1}), (\ref{StabStk2}), (\ref{aprox01})$_1$  and (\ref{aprox01-a}) we obtain
\begin{eqnarray}\label{Ea1c}
&I_3&\!\!\! = A(\xi_{\mathbf{u}}^{m-1},\theta^m_n,\xi^m_n) -A(\mathbb{P}_{\mathbf{u}} \mathbf{u}^{m-1},\theta^m_n,\xi^m_n)  \nonumber\\
&&\!\!\! \leq  \Vert \xi_{\mathbf{u}}^{m-1}\Vert_{L^2}   \Vert \theta_n^m\Vert_{L^\infty\cap W^{1,3}} \Vert \xi_n^m\Vert_{H^1} + \Vert \mathbb{P}_{\mathbf{u}} \mathbf{u}^{m-1}\Vert_{L^\infty\cap W^{1,3}}   \Vert \theta_n^m\Vert_{L^2} \Vert \xi_n^m\Vert_{H^1} \nonumber\\
&&\!\!\! \leq \displaystyle\frac{D_n}{10} \Vert \nabla \xi_n^m\Vert_{L^2}^2 + \frac{1}{2}\Vert \xi_n^m\Vert_{L^2}^2+ C \Vert n^m\Vert_{H^2}^2 \Vert \xi_{\mathbf{u}}^{m-1}\Vert_{L^2}^2    \nonumber\\
&&+C h^{2(r_1 +1)} \Vert [\mathbf{u}^{m-1},\pi^{m-1}]\Vert_{H^2\times H^1}^2   \Vert n^m\Vert_{H^{r_1 + 1}}^2.
\end{eqnarray}
Now, using the H\"older and Young inequalities, (\ref{ime1}), (\ref{StabStk1}) and (\ref{aprox01})$_{1,2,4}$, we have
\begin{eqnarray}\label{Ea1d}
&I_4 + I_6+I_7&\!\!\!\! \leq (\Vert \mathbf{u}^m\!\!-\!\mathbf{u}^{m-1}\Vert_{L^2}+\Vert \xi_{\mathbf{u}}^{m-1}\Vert_{L^2}+\Vert \theta_{\mathbf{u}}^{m-1}\Vert_{L^2}  )  \Vert n^m\Vert_{L^\infty\cap W^{1,3}} \Vert \xi_n^m\Vert_{H^1}  \nonumber\\
&& + \mu_1 (\Vert n^m\!\!-\! n^{m-1}\Vert_{L^2}+\Vert \xi_{n}^{m-1}\Vert_{L^2}+\Vert \theta_{n}^{m-1}\Vert_{L^2})  \Vert n^m\Vert_{L^3} \Vert \xi_n^m\Vert_{H^1} \nonumber\\
&& + \mu_1 a_1(\Vert w^m\!\!-\! w^{m-1}\Vert_{L^2}+\Vert \xi_{w}^{m-1}\Vert_{L^2}+\Vert \theta_{w}^{m-1}\Vert_{L^2})  \Vert n^m\Vert_{L^3} \Vert \xi_n^m\Vert_{H^1}  \nonumber\\
&&+ \chi_1(\Vert \mathbf{s}^m\!\!-\!\mathbf{s}^{m-1}\Vert_{L^2}+\Vert \xi_{\mathbf{s}}^{m-1}\Vert_{L^2}+\Vert \theta_{\mathbf{s}}^{m-1}\Vert_{L^2}  )  \Vert n^m\Vert_{L^\infty} \Vert \nabla \xi_n^m\Vert_{L^2} \nonumber\\
&&\!\!\!\! \leq \displaystyle\frac{D_n}{10} \Vert \nabla \xi_n^m\Vert_{L^2}^2 + \frac{1}{2}\Vert \xi_n^m\Vert_{L^2}^2+C \Vert [\xi_{\mathbf{u}}^{m-1}, \xi_{n}^{m-1},\xi_{w}^{m-1}, \xi_{\mathbf{s}}^{m-1}]\Vert_{L^2}^2 \Vert n^m\Vert_{H^2}^2\nonumber\\
&&+C \Vert [\mathbf{u}^m-\mathbf{u}^{m-1},n^m - n^{m-1}, w^m-w^{m-1},\mathbf{s}^m-\mathbf{s}^{m-1}]\Vert_{L^2}^2  \Vert n^m\Vert_{H^2}^2\nonumber\\
&&+C (h^{2(r+1)} \|[\mathbf{u}^{m-1},\pi^{m-1}]\|_{H^{r+1}\times H^{r}}^2+h^{2(r_1+1)} \|n^{m-1}\|_{H^{r_1+1}}^2) \Vert n^m\Vert_{H^2}^2\nonumber\\
&&+C (h^{2(r_2+1)} \|w^{m-1}\|_{H^{r_2+1}}^2+h^{2(r_4+1)} \|\mathbf{s}^{m-1}\|_{H^{r_4+1}}^2) \Vert n^m\Vert_{H^2}^2
\end{eqnarray}
and
\begin{eqnarray}\label{Ea1g}
&\displaystyle I_{5}&\!\!\! \leq \mu_1(\Vert \xi^m_{n}\Vert_{L^2}+\Vert\theta^m_{n}\Vert_{L^2})\Vert \xi_n^{m}\Vert_{L^2} + \mu_1(\Vert \xi^m_{n}\Vert_{L^2}+\Vert\theta^m_{n}\Vert_{L^2}) (\Vert n^{m-1}_{h}\Vert_{L^6} + a_1\Vert w^{m-1}_{h}\Vert_{L^6}) \Vert \xi_n^{m}\Vert_{L^3}  \nonumber\\
&&\!\!\! \leq \displaystyle\frac{D_n}{10} \Vert \nabla\xi_n^m\Vert_{L^2}^2+\frac{1}{2} \Vert \xi_n^m\Vert_{L^2}^2 + C(1+\Vert n^{m-1}_{h}\Vert_{L^6}^2 + \Vert w^{m-1}_{h}\Vert_{L^6}^2) (\Vert \xi_n^{m}\Vert_{L^2}^2 + h^{2(r_1+1)}\|n^{m}\|_{ H^{r_1+1}}^2).\ \ \ \ \ \ \
\end{eqnarray}
Finally, from (\ref{ime1}), (\ref{aprox01})$_1$, (\ref{aprox01-a}), the 3D interpolation inequality (\ref{in3Dl4}) and the inductive hypothesis (\ref{IndHyp}), we can bound
\begin{eqnarray}\label{Ea1e-new}
& I_8&\!\!\! = \chi_1 (\xi^{m}_n {\bf s}^{m-1}_h,\nabla \xi_n^m) - \chi_1(\theta^{m}_n \xi_{\bf s}^{m-1},\nabla \xi_n^m)+ \chi_1(\theta^{m}_n \mathbb{P}_{\bf s}{\bf s}^{m-1},\nabla \xi_n^m) \nonumber\\
&&\!\!\! \leq \chi_1(\Vert \xi^{m}_{n}\Vert_{L^2}^{1/10}\Vert \xi^{m}_{n}\Vert_{H^1}^{9/10} \Vert{\bf s}^{m-1}_h\Vert_{L^{10/3}}+ \Vert \xi_{\bf s}^{m-1}\Vert_{L^2}   \Vert \theta_n^{m}\Vert_{L^\infty}  + \Vert \mathbb{P}_{\bf s} {\bf s}^{m-1}\Vert_{L^\infty}  \Vert \theta_n^{m}\Vert_{L^2}) \Vert\nabla \xi_n^m\Vert_{L^2}\nonumber\\
&&\!\!\! \leq \displaystyle\frac{D_n}{10} \Vert \nabla\xi_n^m\Vert_{L^2}^2 +  C \Vert \xi^{m}_{n}\Vert_{L^2}^{2}+C\Vert n^{m}\Vert_{H^2}^2 \Vert \xi_{\bf s}^{m-1}\Vert_{L^2}^2    + \displaystyle Ch^{2(r_1 +1)} \Vert {\bf s}^{m-1}\Vert_{H^2}^2   \Vert n^{m}\Vert_{H^{r_1 + 1}}^2.
\end{eqnarray}
Thus, from (\ref{errn-int2})-(\ref{Ea1e-new}), we arrive at
\begin{eqnarray}\label{errnfin}
&\displaystyle\frac{1}{2}&\!\!\!\!\!\delta_t  \Vert \xi_n^m\Vert_{L^2}^2 + \frac{\Delta t}{2} \Vert \delta_t \xi_n^m\Vert_{L^2}^2 + \frac{D_n}{2}\Vert\nabla\xi_n^m\Vert_{L^2}^2 \leq   C h^{2(r_1+1)}\Big[\frac{1}{\Delta t}\int_{t_{m-1}}^{t_m}\!\!\!\Vert \partial_t n(t) \Vert_{H^{r_1+1}}^2  dt + \Vert n^m \Vert_{H^{r_1 +1}}^2\Big] \nonumber\\
&&\!\!\!\!\!\!\!\!\!+C\Delta t \!\int_{t_{m-1}}^{t_m}\!\!\!\Vert \partial_{tt} n(t)\Vert_{(H^1)'}^2 dt+ C \Vert n^m\Vert_{H^2}^2 \Vert \xi_{\mathbf{u}}^{m-1}\Vert_{L^2}^2+C h^{2(r_1 +1)} \Vert [\mathbf{u}^{m-1},\pi^{m-1}]\Vert_{H^2\times H^1}^2   \Vert n^m\Vert_{H^{r_1 + 1}}^2\nonumber\\
&&\!\!\!\!\!\!\!\!\! +C \Vert [\xi_{\mathbf{u}}^{m-1}, \xi_{n}^{m-1},\xi_{w}^{m-1}, \xi_{\mathbf{s}}^{m-1}]\Vert_{L^2}^2 \Vert n^m\Vert_{H^2}^2 +C (h^{2(r_2+1)} \|w^{m-1}\|_{H^{r_2+1}}^2+h^{2(r_4+1)} \|\mathbf{s}^{m-1}\|_{H^{r_4+1}}^2) \Vert n^m\Vert_{H^2}^2\nonumber\\
&&\!\!\!\!\!\!\!\!\! +C \Vert [\mathbf{u}^m-\mathbf{u}^{m-1},n^m - n^{m-1}, w^m-w^{m-1},\mathbf{s}^m-\mathbf{s}^{m-1}]\Vert_{L^2}^2  \Vert n^m\Vert_{H^2}^2  + \displaystyle Ch^{2(r_1 +1)} \Vert {\bf s}^{m-1}\Vert_{H^2}^2   \Vert n^{m}\Vert_{H^{r_1 + 1}}^2 \nonumber\\
&&\!\!\!\!\!\!\!\!\!+C (h^{2(r+1)} \|[\mathbf{u}^{m-1},\pi^{m-1}]\|_{H^{r+1}\times H^{r}}^2+h^{2(r_1+1)} \|n^{m-1}\|_{H^{r_1+1}}^2) \Vert n^m\Vert_{H^2}^2+C\Vert n^{m}\Vert_{H^2}^2 \Vert \xi_{\bf s}^{m-1}\Vert_{L^2}^2 \nonumber\\
&&\!\!\!\!\!\!\!\!\! + C(1+\Vert n^{m-1}_{h}]\Vert_{L^6}^2 + \Vert w^{m-1}_{h}\Vert_{L^6}^2) (\Vert \xi_n^{m}\Vert_{L^2}^2 + h^{2(r_1+1)}\|n^{m}\|_{ H^{r_1+1}}^2).
\end{eqnarray}

On the other hand, taking into account (\ref{Interp1New})$_2$, from (\ref{errw}),  (\ref{u1aw}) and (\ref{u1asig})-(\ref{u1au}) we have
\begin{eqnarray}\label{errw-int}
&(\delta_t \xi_w^m,&\!\!\!\!\bar{w}) + D_w(\nabla \xi_w^m, \nabla \bar{w})  +A(\mathbf{u}^{m-1}_h,\xi^m_w,\bar{w}) =(\rho_w^m,\bar{w})-(\delta_t \theta_w^m,\bar{w})+D_w(\theta_w^m,\bar{w})-A(\mathbf{u}^{m-1}_h,\theta^m_w,\bar{w})\nonumber\\
&&\!\!\!\!\!\!\!\!\!\!\! 
-A(\mathbf{u}^m-\mathbf{u}^{m-1}+\xi^{m-1}_{\mathbf{u}}+\theta^{m-1}_{\mathbf{u}},w^m,\bar{w})+\chi_2(w^{m}(\mathbf{s}^m-\mathbf{s}^{m-1}+\xi_{\mathbf{s}}^{m-1}+\theta_{\mathbf{s}}^{m-1}),\nabla\bar{w})\nonumber
\\
&&\!\!\!\!\!\!\!\!\!\!\! +\chi_2((\xi^{m}_{w}+\theta^{m}_{w})\mathbf{s}^{m-1}_{h},\nabla\bar{w})+\mu_2((\xi^m_{w}+\theta^m_{w})(1-a_2[n^{m-1}_{h}]_{+}-[w^{m-1}_{h}]_{+}),\bar{w})\nonumber\\
&&\!\!\!\!\!\!\!\!\!\!\!
-\mu_2(w^{m}(w^m-w^{m-1}+ w^{m-1} - [w^{m-1}_h]_+ +a_2(n^m-n^{m-1}+ n^{m-1} - [n^{m-1}_h]_+)),\bar{w}),
\end{eqnarray}
for all $\bar{w}\in W_h$. Taking $\bar{w}=\xi_w^m$ in (\ref{errw-int}), and proceeding analogously to (\ref{errn-int2})-(\ref{Ea1e-new}), we arrive at
\begin{eqnarray}\label{errwfin}
&\displaystyle\frac{1}{2}&\!\!\!\!\!\delta_t  \Vert \xi_w^m\Vert_{L^2}^2 + \frac{\Delta t}{2} \Vert \delta_t \xi_w^m\Vert_{L^2}^2 + \frac{D_w}{2}\Vert\nabla\xi_w^m\Vert_{L^2}^2 \leq   C h^{2(r_2+1)}\Big[\frac{1}{\Delta t}\int_{t_{m-1}}^{t_m}\!\!\!\Vert \partial_t w(t) \Vert_{H^{r_2+1}}^2  dt + \Vert w^m \Vert_{H^{r_2 +1}}^2\Big] \nonumber\\
&&\!\!\!\!\!\!\!\!\!+C\Delta t \!\int_{t_{m-1}}^{t_m}\!\!\!\Vert \partial_{tt} w(t)\Vert_{(H^1)'}^2 dt+ C \Vert w^m\Vert_{H^2}^2 \Vert \xi_{\mathbf{u}}^{m-1}\Vert_{L^2}^2+C h^{2(r_2 +1)} \Vert [\mathbf{u}^{m-1},\pi^{m-1}]\Vert_{H^2\times H^1}^2   \Vert w^m\Vert_{H^{r_2 + 1}}^2\nonumber\\
&&\!\!\!\!\!\!\!\!\! +C \Vert [\xi_{\mathbf{u}}^{m-1}, \xi_{n}^{m-1},\xi_{w}^{m-1}, \xi_{\mathbf{s}}^{m-1}]\Vert_{L^2}^2 \Vert w^m\Vert_{H^2}^2 +C (h^{2(r_2+1)} \|w^{m-1}\|_{H^{r_2+1}}^2+h^{2(r_4+1)} \|\mathbf{s}^{m-1}\|_{H^{r_4+1}}^2) \Vert w^m\Vert_{H^2}^2\nonumber\\
&&\!\!\!\!\!\!\!\!\! +C \Vert [\mathbf{u}^m-\mathbf{u}^{m-1},n^m - n^{m-1}, w^m-w^{m-1},\mathbf{s}^m-\mathbf{s}^{m-1}]\Vert_{L^2}^2  \Vert w^m\Vert_{H^2}^2  + \displaystyle Ch^{2(r_2 +1)} \Vert {\bf s}^{m-1}\Vert_{H^2}^2   \Vert w^{m}\Vert_{H^{r_2 + 1}}^2 \nonumber\\
&&\!\!\!\!\!\!\!\!\!+C (h^{2(r+1)} \|[\mathbf{u}^{m-1},\pi^{m-1}]\|_{H^{r+1}\times H^{r}}^2+h^{2(r_1+1)} \|n^{m-1}\|_{H^{r_1+1}}^2) \Vert w^m\Vert_{H^2}^2+C\Vert w^{m}\Vert_{H^2}^2 \Vert \xi_{\bf s}^{m-1}\Vert_{L^2}^2 \nonumber\\
&&\!\!\!\!\!\!\!\!\! + C(1+\Vert n^{m-1}_{h}]\Vert_{L^6}^2 + \Vert w^{m-1}_{h}\Vert_{L^6}^2) (\Vert \xi_w^{m}\Vert_{L^2}^2 + h^{2(r_2+1)}\|w^{m}\|_{ H^{r_2+1}}^2).
\end{eqnarray}
\

{\underline{{\it 2. Error estimate for the chemical concentration} $c$}}\\

Taking into account (\ref{Interp1New})$_3$, from (\ref{errc}),  (\ref{u1ac}) and (\ref{u1au}), we have
\begin{eqnarray}\label{errc-int}
&\left(\delta_t \xi_c^m,\bar{c}\right) &\!\!\!\!+ D_c(\nabla \xi_c^m, \nabla \bar{c})  +A(\mathbf{u}^{m-1}_h,\xi^m_c,\bar{c}) =(\rho_c^m,\bar{c})-\left(\delta_t \theta_c^m,\bar{c}\right)+D_c(\theta_c^m,\bar{c})-A(\mathbf{u}^{m-1}_h,\theta^m_c,\bar{c})\nonumber\\
&&\!\!\!
-A(\mathbf{u}^m-\mathbf{u}^{m-1}+\xi^{m-1}_{\mathbf{u}}+\theta^{m-1}_{\mathbf{u}},c^m,\bar{c}) - ( (\xi^m_c + \theta^m_c)(\alpha [n^m_h]_+ + \beta [w^m_h]_+),\bar{c})\nonumber\\
&&\!\!\!  -(c^{m}(\alpha n^{m}-\alpha [n^{m}_h]_+ + \beta w^{m}-\beta [w^{m}_h]_+),\bar{c}), \ \ \forall \bar{c}\in C_h,
\end{eqnarray}
from which, considering $\bar{c}=\xi_c^m$ and using (\ref{a8}), we get
\begin{eqnarray}\label{errc-int2}
&\displaystyle\frac{1}{2}&\!\!\!\!\!\delta_t  \Vert \xi_c^m\Vert_{L^2}^2 + \frac{\Delta t}{2} \Vert \delta_t \xi_c^m\Vert_{L^2}^2 + D_c\Vert \nabla \xi_c^m\Vert_{L^2}^2   =(\rho_c^m - \delta_t \theta_c^m +D_c \theta_c^m,\xi_{c}^m)-A(\mathbf{u}^{m-1}_h,\theta^m_c,\xi_c^m)\nonumber\\
&&\!\!\!
-A(\mathbf{u}^m-\mathbf{u}^{m-1}+\xi^{m-1}_{\mathbf{u}}+\theta^{m-1}_{\mathbf{u}},c^m,\xi^m_c) - ( (\xi^m_c + \theta^m_c)(\alpha [n^m_h]_+ + \beta [w^m_h]_+),\xi^m_c)\nonumber\\
&&\!\!\!  -(c^{m}(\alpha n^{m}-\alpha [n^{m}_h]_+ + \beta w^{m}-\beta [w^{m}_h]_+),\xi^m_c)=\sum_{k=1}^{5}L_k.
\end{eqnarray}
Notice that, using the H\"older and Young inequalities and (\ref{aprox01})$_3$, we obtain
\begin{eqnarray}\label{Ea1aC}
&L_1 &\!\!\!\leq (\Vert \xi_c^m\Vert_{L^2}+ \Vert \nabla \xi_c^m\Vert_{L^2})\Vert \rho_c^m\Vert_{(H^1)'} + (\Vert(\mathcal{I} - \mathbb{P}_c) \delta_t c^m \Vert_{L^2}+D_c\Vert \theta_c^m\Vert_{L^2})\Vert \xi_c^m\Vert_{L^2} \nonumber\\
&&\!\!\!\leq \displaystyle\frac{D_c}{6}\Vert \nabla \xi_c^m\Vert_{L^2}^2 + \frac{1}{2}\Vert\xi_c^m\Vert_{L^2}^2+ C\Big(1+\frac{1}{D_c}\Big) \Vert \rho_c^m\Vert_{(H^1)'}^2 +C h^{2(r_3+1)}\Big[\Vert \delta_t c^m\Vert_{H^{r_3+1}}^2+D_c^2 \Vert c^m \Vert_{H^{r_3 +1}}^2\Big] \nonumber\\
&&\!\!\!\leq \displaystyle\frac{D_c}{6}\Vert \nabla \xi_c^m\Vert_{L^2}^2 +\frac{1}{2}\Vert \xi_c^m\Vert_{L^2}^2+C\Delta t \!\int_{t_{m-1}}^{t_m}\!\!\!\Vert \partial_{tt} c(t)\Vert_{(H^1)'}^2 dt \nonumber\\
&& +C h^{2(r_3+1)}\Big[\frac{1}{\Delta t}\int_{t_{m-1}}^{t_m}\!\!\!\Vert \partial_t c(t) \Vert_{H^{r_3+1}}^2  dt + \Vert c^m \Vert_{H^{r_3 +1}}^2\Big].
\end{eqnarray}
Moreover, from the definition of the skew-symmetric trilinear form (\ref{a6aa}), the H\"older and Young inequalities, (\ref{ime1}), (\ref{StabStk1}), (\ref{StabStk2}), (\ref{aprox01})$_{1,2,3}$ and (\ref{aprox01-a}) we have
\begin{eqnarray}\label{Ea1cC}
&L_2&\!\!\! = A(\xi_{\mathbf{u}}^{m-1},\theta^m_c,\xi^m_c) -A(\mathbb{P}_{\mathbf{u}} \mathbf{u}^{m-1},\theta^m_c,\xi^m_c)  \nonumber\\
&&\!\!\! \leq  \Vert \xi_{\mathbf{u}}^{m-1}\Vert_{L^2}   \Vert \theta_c^m\Vert_{L^\infty\cap W^{1,3}} \Vert \xi_c^m\Vert_{H^1} + \Vert \mathbb{P}_{\mathbf{u}} \mathbf{u}^{m-1}\Vert_{L^\infty\cap W^{1,3}}   \Vert \theta_c^m\Vert_{L^2} \Vert \xi_c^m\Vert_{H^1} \nonumber\\
&&\!\!\! \leq \displaystyle\frac{D_c}{6}\Vert \nabla \xi_c^m\Vert_{L^2}^2 + \frac{1}{2}\Vert \xi_c^m\Vert_{L^2}^2+ C \Vert c^m\Vert_{H^2}^2 \Vert \xi_{\mathbf{u}}^{m-1}\Vert_{L^2}^2   +C h^{2(r_3 +1)} \Vert [\mathbf{u}^{m-1},\pi^{m-1}]\Vert_{H^2\times H^1}^2   \Vert c^m\Vert_{H^{r_3 + 1}}^2\ \ \ \ \ \ \ \ 
\end{eqnarray}
and
\begin{eqnarray}\label{Ea1dC}
&\displaystyle\sum_{k=3}^{5} L_k&\!\!\!\! \leq (\Vert \mathbf{u}^m\!\!-\!\mathbf{u}^{m-1}\Vert_{L^2}+\Vert \xi_{\mathbf{u}}^{m-1}\Vert_{L^2}+\Vert \theta_{\mathbf{u}}^{m-1}\Vert_{L^2}  )  \Vert c^m\Vert_{L^\infty\cap W^{1,3}} \Vert \xi_c^m\Vert_{H^1}  \nonumber\\
&& + (\Vert \xi_{c}^{m}\Vert_{L^2}+\Vert \theta_{c}^{m}\Vert_{L^2})(\alpha \Vert n^m_h \Vert_{L^6}+\beta \Vert w^m_h \Vert_{L^6})\Vert \xi_c^m\Vert_{L^3}  \nonumber\\
&&+ (\alpha \Vert \xi_{n}^{m}\Vert_{L^2}+\alpha\Vert \theta_{n}^{m}\Vert_{L^2} +\beta\Vert \xi_{w}^{m}\Vert_{L^2}+\beta\Vert \theta_{w}^{m}\Vert_{L^2})\Vert c^m\Vert_{L^3}\Vert \xi_c^m\Vert_{L^6} \nonumber\\
&&\!\!\!\! \leq \displaystyle\frac{D_c}{6} \Vert \nabla \xi_c^m\Vert_{L^2}^2 + \frac{1}{2}\Vert  \xi_c^m\Vert_{L^2}^2+ C (\Vert \mathbf{u}^m-\mathbf{u}^{m-1}\Vert_{L^2}^2+h^{2(r+1)} \|[\mathbf{u}^{m-1},\pi^{m-1}]\|_{H^{r+1}\times H^{r}}^2)  \Vert c^m\Vert_{H^2}^2\nonumber\\
&&+C \Vert \xi_{\mathbf{u}}^{m-1}\Vert_{L^2}^2 \Vert c^m\Vert_{H^2}^2+ C(\Vert \xi_{c}^{m}\Vert_{L^2}^2+h^{2(r_3 +1)}\Vert {c}^{m}\Vert_{H^{r_3 +1}}^2)( \Vert n^m_h\Vert_{L^6}^2+ \Vert w^m_h\Vert_{L^6}^2)\nonumber\\
&& + C(\Vert \xi_{n}^{m}\Vert_{L^2}^2+h^{2(r_1 +1)}\Vert {n}^{m}\Vert_{H^{r_1 +1}}^2+\Vert \xi_{w}^{m}\Vert_{L^2}^2+h^{2(r_2 +1)}\Vert {w}^{m}\Vert_{H^{r_2 +1}}^2) \Vert c^m\Vert_{L^3}^2.
\end{eqnarray}
Therefore, from (\ref{errc-int2})-(\ref{Ea1dC}), we arrive at
\begin{eqnarray}\label{errcfin}
&\displaystyle\frac{1}{2}&\!\!\!\!\!\delta_t  \Vert \xi_c^m\Vert_{L^2}^2 + \displaystyle\frac{\Delta t}{2} \Vert \delta_t \xi_c^m\Vert_{L^2}^2 + \frac{D_c}{2}\Vert \nabla \xi_c^m\Vert_{L^2}^2  \leq C\Delta t \!\int_{t_{m-1}}^{t_m}\!\!\!\Vert \partial_{tt} c(t)\Vert_{(H^1)'}^2 dt+ C \Vert c^m\Vert_{H^2}^2 \Vert \xi_{\mathbf{u}}^{m-1}\Vert_{L^2}^2 \nonumber\\
&&\!\!\!\!\!\!\!\!\!+ C h^{2(r_3+1)}\Big[\displaystyle\frac{1}{\Delta t }\!\int_{t_{m-1}}^{t_m}\!\!\!\Vert \partial_t c(t) \Vert_{H^{r_3+1}}^2  dt+\Vert c^m \Vert_{H^{r_3 +1}}^2\Big] +C h^{2(r_3 +1)} \Vert [\mathbf{u}^{m-1},\pi^{m-1}]\Vert_{H^2\times H^1}^2   \Vert c^m\Vert_{H^{r_3 + 1}}^2\nonumber\\
&&\!\!\!\!\!\!\!\!\!+ C( \Vert \mathbf{u}^m-\mathbf{u}^{m-1}\Vert_{L^2}^2 + h^{2(r+1)} \|[\mathbf{u}^{m-1},\pi^{m-1}]\|_{H^{r+1}\times H^{r}}^2+\Vert \xi_{\mathbf{u}}^{m-1}\Vert_{L^2}^2) \Vert c^m\Vert_{H^2}^2+ \Vert \xi^m_c\Vert_{L^2}^2\nonumber\\
&&\!\!\!\!\!\!\!\!\! + C(\Vert \xi_{n}^{m}\Vert_{L^2}^2+h^{2(r_1 +1)}\Vert {n}^{m}\Vert_{H^{r_1 +1}}^2+\Vert \xi_{w}^{m}\Vert_{L^2}^2+h^{2(r_2 +1)}\Vert {w}^{m}\Vert_{H^{r_2 +1}}^2) \Vert c^m\Vert_{L^3}^2\nonumber\\
&&\!\!\!\!\!\!\!\!\!+ C(\Vert \xi_{c}^{m}\Vert_{L^2}^2+h^{2(r_3 +1)}\Vert {c}^{m}\Vert_{H^{r_3 +1}}^2)( \Vert n^m_h\Vert_{L^6}^2+ \Vert w^m_h\Vert_{L^6}^2).
\end{eqnarray}
\

{\underline{{\it 4. Error estimate for the velocity field} $\mathbf{u}$}}\\

Using (\ref{StokesOp}),  (\ref{u1an})-(\ref{u1aw}) and (\ref{u1au})-(\ref{u1api}) in (\ref{erru})-(\ref{errpi}), we obtain
\begin{eqnarray}\label{erru-int}
&(\delta_t  \xi_{\mathbf{u}}^m,&\!\!\!\!\!\bar{\mathbf{u}})+ D_{\mathbf{u}}(\nabla \xi_{\mathbf{u}}^m, \nabla \bar{\mathbf{u}})= (\rho_{\mathbf{u}}^m-\delta_t \theta_{\mathbf{u}}^m,\bar{\mathbf{u}}) - kB(\mathbf{u}^m-\mathbf{u}^{m-1},{\mathbf{u}}^m,\bar{\mathbf{u}})   -kB(\xi^{m-1}_{\mathbf{u}}+\theta^{m-1}_{\mathbf{u}},{\mathbf{u}}^m,\bar{\mathbf{u}})\nonumber\\
&&\hspace{-0.8 cm} \!-kB(\mathbf{u}^{m-1}_h,\xi^m_{\mathbf{u}}+\theta^m_{\mathbf{u}},\bar{\mathbf{u}}) +(\xi^m_\pi,\nabla \cdot \bar{\mathbf{u}}) +((\gamma( \xi^{m}_{n}+\theta^{m}_{n})+\lambda( \xi^{m}_{w}+\theta^{m}_{w}))\nabla\phi,\bar{\mathbf{u}}),
\end{eqnarray}
\begin{equation}\label{errpi-int}
(\bar{\pi},\nabla \cdot \xi^m_{\mathbf{u}})=0,
\end{equation}
for all $[\bar{\mathbf{u}},\bar{\pi}]\in {U}_h\times \Pi_h$. Taking $\bar{\mathbf{u}}=\xi_{\mathbf{u}}^m$ in (\ref{erru-int}), $\bar{\pi}=\xi^m_\pi$ in (\ref{errpi-int}), using (\ref{a7}) and adding the resulting expressions, we get
\begin{eqnarray}\label{erru-int2}
&\displaystyle\frac{1}{2}&\!\!\!\!\!\delta_t  \Vert\xi_{\mathbf{u}}^m\Vert_{L^2}^2+\displaystyle\frac{\Delta t}{2}\Vert\delta_t \xi_{\mathbf{u}}^m\Vert_{L^2}^2 +D_{\mathbf{u}}\Vert\nabla \xi_{\mathbf{u}}^m\Vert_{L^2}^2= (\rho_{\mathbf{u}}^m - \delta_t \theta_{\mathbf{u}}^m,\xi_{\mathbf{u}}^m)  -kB(\mathbf{u}^{m-1}_h\!,\theta^m_{\mathbf{u}},\xi_{\mathbf{u}}^m) \nonumber\\
&&\!\!\!\!\! - kB(\mathbf{u}^m-\mathbf{u}^{m-1}+\xi^{m-1}_{\mathbf{u}}\!+\theta^{m-1}_{\mathbf{u}}\!,{\mathbf{u}}^m,\xi_{\mathbf{u}}^m)  +((\gamma( \xi^{m}_{n}+\theta^{m}_{n})+\lambda ( \xi^{m}_{w}+\theta^{m}_{w}))\nabla\phi,\xi_{\mathbf{u}}^m) =\sum_{i=1}^{4} S_i.\ \ \ \ \ \ \  
\end{eqnarray}
We bound the terms $S_i$ as follows: First, using the H\"older and Young inequalities, the Poincar\'e inequality (\ref{PIa}) and (\ref{StabStk1}), we have
\begin{eqnarray}\label{Ea1au}
&S_1&\!\!\!\leq \Vert \nabla \xi_{\mathbf{u}}^m\Vert_{L^2} \Vert \rho_{\mathbf{u}}^m\Vert_{(H^1)'}^2 + \Vert  \xi_{\mathbf{u}}^m\Vert_{L^2} \Vert (\mathcal{I}-\mathbb{P}_{\mathbf{u}}) \delta_t \mathbf{u}^m \Vert_{L^2}\nonumber\\
&&\!\!\!\leq \displaystyle\frac{D_{\mathbf{u}}}{6} \Vert \nabla \xi_{\mathbf{u}}^m\Vert_{L^2}^2 + \frac{C}{D_{\mathbf{u}}}\Vert \rho_{\mathbf{u}}^m\Vert_{(H^1)'}^2 + \frac{C h^{2(r+1)}}{D_{\mathbf{u}}}\Vert [\delta_t {\mathbf{u}}^m,\delta_t \pi^m]\Vert_{H^{r+1}\times H^r}\nonumber\\
&&\!\!\!\leq \frac{D_{\mathbf{u}}}{6}\Vert \nabla \xi_{\mathbf{u}}^m\Vert_{L^2}^2 + C\Delta t \int_{t_{m-1}}^{t_m}\Vert \partial_{tt} {\mathbf{u}}(t)\Vert_{(H^1)'}^2 dt+\frac{Ch^{2(r+1)}}{\Delta t}\int_{t_{m-1}}^{t_m}\Vert [\partial_t \mathbf{u},\partial_t\pi] \Vert_{H^{r+1}\times H^r}^2  dt.
\end{eqnarray}
Moreover, taking into account the definition of the skew-symmetric trilinear form (\ref{a6}), the H\"older, Young and Poincar\'e inequalities, (\ref{ime1}), (\ref{StabStk1})-(\ref{StabStk2}) and (\ref{aprox01})$_{1,2}$, we have
\begin{eqnarray}\label{Ea1du}
&S_2&\!\!\! = kB(\xi_{\mathbf{u}}^{m-1},\theta^m_{\mathbf{u}},\xi^m_{\mathbf{u}}) -kB(\mathbb{P}_{\mathbf{u}} \mathbf{u}^{m-1},\theta^m_{\mathbf{u}},\xi^m_{\mathbf{u}})  \nonumber\\
&&\!\!\! \leq k( \Vert \xi_{\mathbf{u}}^{m-1}\Vert_{L^2}   \Vert \theta_{\mathbf{u}}^m\Vert_{L^\infty\cap W^{1,3}} \Vert \xi_{\mathbf{u}}^m\Vert_{H^1} + \Vert \mathbb{P}_{\mathbf{u}} \mathbf{u}^{m-1}\Vert_{L^\infty\cap W^{1,3}}   \Vert \theta_{\mathbf{u}}^m\Vert_{L^2} \Vert \xi_{\mathbf{u}}^m\Vert_{H^1}) \nonumber\\
&&\!\!\! \leq  \frac{D_{\mathbf{u}}}{6} \Vert \nabla \xi_{\mathbf{u}}^m\Vert_{L^2}^2 + C  \Vert [\mathbf{u}^m,\pi^m]\Vert_{H^2\times H^1}^2 \Vert \xi_{\mathbf{u}}^{m-1}\Vert_{L^2}^2\nonumber\\
&&  + \displaystyle Ch^{2(r +1)} \Vert [\mathbf{u}^{m-1},\pi^{m-1}]\Vert_{H^2\times H^1}^2   \Vert [\mathbf{u}^m,\pi^m]\Vert_{H^{r + 1}\times H^r}^2
\end{eqnarray}
{and}
\begin{eqnarray}\label{Ea1cu}
&S_3 + S_4&\!\!\! \leq k(\Vert \mathbf{u}^m-\mathbf{u}^{m-1}\Vert_{L^2}+\Vert \xi_{\mathbf{u}}^{m-1}\Vert_{L^2}+\Vert \theta_{\mathbf{u}}^{m-1}\Vert_{L^2}  )  \Vert {\mathbf{u}}^m\Vert_{L^\infty\cap W^{1,3}} \Vert \xi_{\mathbf{u}}^m\Vert_{H^1}\nonumber\\
&& + (\gamma \Vert \xi^m_n\Vert_{L^2} + \gamma\Vert \theta^m_n\Vert_{L^2} +\lambda \Vert \xi^m_w \Vert_{L^2} + \lambda \Vert  \theta^m_w\Vert_{L^2} ) \Vert \nabla \phi\Vert_{L^3}\Vert \xi_{\mathbf{u}}^m\Vert_{H^1}  \nonumber\\
&&\!\!\! \leq \frac{D_{\mathbf{u}}}{6}\Vert \nabla \xi_{\mathbf{u}}^m\Vert_{L^2}^2 +C(\Vert \mathbf{u}^m\!\!-\!\mathbf{u}^{m-1}\Vert_{L^2}^2+h^{2(r+1)} \|[\mathbf{u}^{m-1}\!,\pi^{m-1}]\|_{H^{r+1}\times H^{r}}^2) \Vert \mathbf{u}^m\Vert_{H^2}^2\nonumber\\
&&+C\Vert \xi_{\mathbf{u}}^{m-1}\Vert_{L^2}^2 \Vert \mathbf{u}^m\Vert_{H^2}^2 +C (\Vert \xi^m_n\Vert_{L^2}^2  + \Vert \xi^m_w \Vert_{L^2}^2) \Vert \nabla \phi \Vert_{L^3}^2\nonumber\\
&& + C( h^{2(r_1+1)}\Vert n^m \Vert_{H^{r_1+1}}^2 +   h^{2(r_2+1)}\Vert w^m \Vert_{H^{r_2+1}}^2)\Vert \nabla \phi \Vert_{L^3}^2.
\end{eqnarray}
Therefore, from (\ref{erru-int2})-(\ref{Ea1cu}), we arrive at
\begin{eqnarray}\label{errufin}
&\displaystyle\frac{1}{2}&\!\!\!\!\!\delta_t  \Vert\xi_{\mathbf{u}}^m\Vert_{L^2}^2+\displaystyle\frac{\Delta t}{2}\Vert\delta_t \xi_{\mathbf{u}}^m\Vert_{L^2}^2 + \frac{D_{\mathbf{u}}}{2}\Vert\nabla \xi_{\mathbf{u}}^m\Vert_{L^2}^2  \leq  C\!\!\int_{t_{m-1}}^{t_m}\!\!\!\Big(\! \Delta t\Vert\partial_{tt} {\mathbf{u}}(t)\Vert_{(H^1)'}^2 + \frac{h^{2(r+1)}}{\Delta t}\Vert [\partial_t\mathbf{u},\partial_t\pi] \Vert_{H^{r+1}\times H^r}^2\!\Big)  dt\nonumber\\
&&\!\!\!\!\!\!\!\!\! + C  \Vert [\mathbf{u}^m,\pi^m]\Vert_{H^2\times H^1}^2 \Vert \xi_{\mathbf{u}}^{m-1}\Vert_{L^2}^2  + \displaystyle Ch^{2(r +1)} \Vert [\mathbf{u}^{m-1},\pi^{m-1}]\Vert_{H^2\times H^1}^2   \Vert [\mathbf{u}^m,\pi^m]\Vert_{H^{r + 1}\times H^r}^2 \nonumber\\
&&\!\!\!\!\!\!\!\!\!+C(\Vert \mathbf{u}^m\!\!-\!\mathbf{u}^{m-1}\Vert_{L^2}^2+h^{2(r+1)} \|[\mathbf{u}^{m-1}\!,\pi^{m-1}]\|_{H^{r+1}\times H^{r}}^2\!+\Vert \xi_{\mathbf{u}}^{m-1}\Vert_{L^2}^2) \Vert \mathbf{u}^m\Vert_{H^2}^2\nonumber\\
&&\!\!\!\!\!\!\!\!\! + C( h^{2(r_1+1)}\Vert n^m \Vert_{H^{r_1+1}}^2+ \Vert \xi^{m}_{n}\Vert_{L^2}^2+ h^{2(r_2+1)}\Vert w^m \Vert_{H^{r_2+1}}^2+ \Vert \xi^{m}_{w}\Vert_{L^2}^2)\Vert \nabla \phi\Vert_{L^3}^2.
\end{eqnarray}
\

{\underline{{\it 5. Error estimate for the auxiliary variable} $\bf s$}}\\

From (\ref{errs}), using (\ref{Interp1New})$_4$ and (\ref{u1an})-(\ref{u1au}), we have
\begin{eqnarray}\label{errs-int}
&(\delta_t \xi_{\bf s}^m&\!\!\!\!\!,\bar{\bf s})+ D_c(\nabla \cdot \xi_{\bf s}^m, \nabla \cdot\bar{\bf s}) + D_c(\mbox{rot}(\xi_{\bf s}^m), \mbox{rot}(\bar{\bf s}))  = (\rho_{\bf s}^m,\bar{\bf s})-(\delta_t \theta_{\bf s}^m,\bar{\bf s}) + D_c(\theta_{\bf s}^m,\bar{\bf s}) \nonumber\\
&&\!\!\!\!\!\!\!\!\!\!\!\!\!\!\!\!\!\! +(({\bf s}^m-{\bf s}^{m-1})\cdot \mathbf{u}^{m}+(\xi^{m-1}_{\mathbf{s}}+\theta^{m-1}_{\mathbf{s}})\cdot{\bf u}^{m}+(\xi^{m}_{\bf u}+\theta^{m}_{\bf u})\cdot\mathbf{s}^{m-1}_h+(\alpha n^{m}+\beta w^{m})({c}^{m}-{c}^{m-1}),\nabla \cdot\bar{\bf s})\nonumber\\
&&\!\!\!\!\!\!\!\!\!\!\!\!\!\!\!\!\!\! + ((\alpha n^{m}+\beta w^{m})(\xi_c^{m-1}+\theta_c^{m-1})+(\alpha (\xi_{n}^{m}+\theta_{n}^{m})+\beta (\xi_{w}^{m}+\theta_{w}^{m}))c^{m-1}_{h},\nabla \cdot\bar{\bf s}), \ \ \forall \bar{\bf s}\in {xxx\Sigma}_h,
\end{eqnarray}
from which, considering $\bar{\bf s}=\xi_{\bf s}^m$, we arrive at
\begin{eqnarray}\label{errs-int2}
&\displaystyle\frac{1}{2}&\!\!\!\!\!\delta_t \Vert\xi_{\bf s}^m\Vert_{L^2}^2+\displaystyle\frac{\Delta t}{2}\Vert\delta_t \xi_{\bf s}^m\Vert_{L^2}^2+D_c\Vert \nabla \cdot \xi_{\bf s}^m\Vert_{L^2}^2+D_c\Vert\mbox{rot}(\xi_{\bf s}^m)\Vert_{L^2}^2 = (\rho_{\bf s}^m - \delta_t \theta_{\bf s}^m + D_c \theta_{\bf s}^m,\xi_{\bf s}^m) \nonumber\\
&&\!\!\!\!\!\! +(\mathbf{u}^{m}\cdot({\bf s}^m-{\bf s}^{m-1})+(\xi^{m-1}_{\mathbf{s}}+\theta^{m-1}_{\mathbf{s}})\cdot{\bf u}^{m}+(\alpha n^{m}+\beta w^{m})({c}^{m}-{c}^{m-1}),\nabla \cdot\xi_{\bf s}^m)\nonumber\\
&&\!\!\!\!\!\! + ((\xi^{m}_{\bf u}+\theta^{m}_{\bf u})\cdot\mathbf{s}^{m-1}_h+(\alpha (\xi_{n}^{m}+\theta_{n}^{m})+\beta (\xi_{w}^{m}+\theta_{w}^{m}))c^{m-1}_{h},\nabla \cdot\xi_{\bf s}^m) \nonumber\\
&&\!\!\!\!\!\! +((\alpha n^{m}+\beta w^{m})(\xi_c^{m-1}+\theta_c^{m-1}),\nabla\cdot\xi_{\bf s}^{m})=\sum_{k=1}^{4} R_k.
\end{eqnarray}
Then, using the H\"older and Young inequalities, the equivalent norm in ${H}^1_{\bf s}(\Omega)$ given in (\ref{EQs}), as well as (\ref{ime1}), (\ref{StabStk1})-(\ref{StabStk2}) and (\ref{aprox01})-(\ref{aprox01-a}), we control the terms on the right hand side of (\ref{errs-int2}) as follows
\begin{eqnarray}\label{Ea1aS}
&R_1&\!\!\!\leq (\Vert \xi_{\bf s}^m\Vert_{L^2}+ \Vert \nabla\cdot \xi_{\bf s}^m\Vert_{L^2}+\Vert \mbox{rot}(\xi_{\bf s}^m)\Vert_{L^2})\Vert \rho_{\bf s}^m\Vert_{(H^1)'} + ( \Vert(\mathcal{I} - \mathbb{P}_{\bf s}) \delta_t {\bf s}^m \Vert_{L^2}+  D_c\Vert \theta_{\bf s}^m\Vert_{L^2})\Vert \xi_{\bf s}^m\Vert_{L^2}\nonumber\\
&&\!\!\!\leq \displaystyle\frac{1}{2}\Vert \xi_{\bf s}^m\Vert_{L^2}^2+\frac{D_c}{6}\Vert \nabla\cdot \xi_{\bf s}^m\Vert_{L^2}^2+\frac{D_c}{2}\displaystyle\Vert \mbox{rot } \xi_{\bf s}^m\Vert_{L^2}^2+ + C\Big(1+\frac{1}{D_c}\Big) \Vert \rho_{\bf s}^m\Vert_{(H^1)'}^2\nonumber\\
&&+C h^{2(r_4+1)}\Big[\Vert \delta_t {\bf s}^m\Vert_{H^{r_4+1}}^2+D_c^2\Vert {\bf s}^m \Vert_{H^{r_4 +1}}^2\Big] \nonumber\\
&&\!\!\!\leq \displaystyle\frac{1}{2}\Vert \xi_{\bf s}^m\Vert_{L^2}^2+\frac{D_c}{6}\Vert \nabla\cdot \xi_{\bf s}^m\Vert_{L^2}^2+\frac{D_c}{2}\displaystyle\Vert \mbox{rot } \xi_{\bf s}^m\Vert_{L^2}^2+C\Delta t \!\int_{t_{m-1}}^{t_m}\!\!\!\Vert \partial_{tt}{\bf s}(t)\Vert_{(H^1)'}^2 dt\nonumber\\
&& +C h^{2(r_4+1)}\Big[\displaystyle\frac{1}{\Delta t }\!\int_{t_{m-1}}^{t_m}\!\!\!\Vert \partial_t{\bf s}(t) \Vert_{H^{r_4+1}}^2  dt+\Vert {\bf s}^m \Vert_{H^{r_4 +1}}^2\Big],
\end{eqnarray}
\begin{eqnarray}\label{Ea1eS}
&R_2+R_4&\!\!\!\! 
\leq \displaystyle\frac{D_c}{6}\Vert \nabla\cdot \xi_{\bf s}^m\Vert_{L^2}^2+C (h^{2(r_4+1)} \Vert {\bf s}^{m-1}\Vert_{H^{r_4+1}}^2 +h^{2(r_3+1)} \Vert c^{m-1}\Vert_{H^{r_3+1}}^2 ) \Vert[\mathbf{u}^{m}, n^m, w^{m}]\Vert_{L^\infty}^2 \nonumber\\
&&+ C(\Vert [ c^m-c^{m-1},{\bf s}^m-{\bf s}^{m-1}]\Vert_{L^2}^2 + \Vert [\xi^{m-1}_{\bf s}, \xi^{m-1}_c]\Vert_{L^2}^2)  \Vert[\mathbf{u}^{m}, n^m, w^{m}]\Vert_{L^\infty}^2,
\end{eqnarray}
\begin{eqnarray}\label{Ea1e-newS}
& R_3&\!\!\! =  ({\bf s}_h^{m-1}\cdot\xi^{m}_{\mathbf{u}}+\theta^{m}_{\mathbf{u}}\cdot(\mathbb{P_{\bf s}}{\bf s}^{m-1}-\xi_{\bf s}^{m-1}),\nabla\cdot\xi_{\bf s}^m) +\alpha( c^{m-1}_h\xi_{n}^{m}+\theta_{n}^{m}(\mathbb{P}_cc^{m-1}-\xi_c^{m-1}),\nabla\cdot\xi_{\bf s}^m) \nonumber\\
&&+\beta( c^{m-1}_h\xi_{w}^{m}+\theta_{w}^{m}(\mathbb{P}_cc^{m-1}-\xi_c^{m-1}),\nabla\cdot\xi_{\bf s}^m)\nonumber\\
&&\!\!\!\leq (\Vert{\bf s}_h^{m-1}\Vert_{L^{10/3}}\Vert\xi_{\bf u}^m\Vert_{L^5}+\Vert\theta_{\bf u}^{m}\Vert_{L^\infty}\Vert\xi_{\bf s}^{m-1}\Vert_{L^2}+\Vert\theta_{\bf u}^m\Vert_{L^2}\Vert\mathbb{P}_{\bf s}{\bf s}^{m-1}\Vert_{L^\infty}+\alpha\Vert c_h^{m-1}\Vert_{L^{10/3}}\Vert\xi_n^{m}\Vert_{L^5}
\nonumber\\
&&
+\beta\Vert c_h^{m-1}\Vert_{L^{10/3}}\Vert\xi_w^{m}\Vert_{L^5}+\alpha\Vert\theta_{n}^{m}\Vert_{L^\infty}\Vert\xi_c^{m-1}\Vert_{L^2}+\alpha\Vert\theta_{n}^{m}\Vert_{L^2}\Vert\mathbb{P}_{c}{c}^{m-1}\Vert_{L^\infty}
\nonumber\\
&&
+\beta\Vert\theta_{w}^{m}\Vert_{L^\infty}\Vert\xi_c^{m-1}\Vert_{L^2}+\beta\Vert\theta_{w}^{m}\Vert_{L^2}\Vert\mathbb{P}_{c}{c}^{m-1}\Vert_{L^\infty})\Vert\nabla\cdot\xi_{\bf s}^m\Vert_{L^2}
\nonumber\\
&&\!\!\! \leq \displaystyle\frac{D_c}{6} \Vert \nabla \cdot\xi_{\bf s}^m \Vert_{L^2}^2+\frac{D_{\bf u}}{4}\Vert \nabla \xi_{\bf u}^{m}\Vert_{L^2}^2+C\Vert\xi_{\bf u}^{m}\Vert_{L^2}^2+C\Vert{\bf u}^m\Vert_{H^2}^2\Vert\xi_{\bf s}^{m-1}\Vert_{L^2}^2+\frac{D_n}{4} \Vert \nabla \xi_n^{m}\Vert_{L^2}^2 + C\Vert  \xi_n^{m}\Vert_{L^2}^2 \nonumber\\
&&  +Ch^{2(r+1)}\Vert{\bf s}^{m-1}\Vert_{H^2}\|[\mathbf{u}^{m}\!,\pi^{m}]\|_{H^{r+1}\times H^{r}}^2
+Ch^{2(r_1+1)}\Vert c^{m-1}\Vert_{H^2}^2\Vert n^{m}\Vert_{H^{r_1+1}}^2+C\Vert n^{m}\Vert_{H^2}^2\Vert \xi_c^{m-1}\Vert_{L^2}^2\nonumber\\
&&   
+C\Vert w^{m}\Vert_{H^2}^2\Vert \xi_c^{m-1}\Vert_{L^2}^2+Ch^{2(r_2+1)}\Vert c^{m-1}\Vert_{H^2}^2\Vert w^{m}\Vert_{H^{r_2+1}}^2+\frac{D_w}{4}\Vert\nabla \xi_{w}^{m}\Vert_{L^2}^2+C\Vert\xi_{w}^{m}\Vert_{L^2}^2,
\end{eqnarray}
where, in the last inequality of (\ref{Ea1e-newS}), the 3D interpolation inequality (\ref{in3Dl4}) and inductive hypothesis (\ref{IndHyp}) was used. Therefore, from (\ref{errs-int2})-(\ref{Ea1e-newS}), we arrive at
\begin{eqnarray}\label{errsfin}
&\displaystyle\frac{1}{2}&\!\!\!\!\!\delta_t \Vert\xi_{\bf s}^m\Vert_{L^2}^2+\displaystyle\frac{\Delta t}{2}\Vert\delta_t \xi_{\bf s}^m\Vert_{L^2}^2+ \frac{D_c}{2}\Vert \nabla \cdot \xi_{\bf s}^m\Vert_{L^2}^2+ \frac{D_c}{2}\Vert\mbox{rot}(\xi_{\bf s}^m)\Vert_{L^2}^2   \leq C\Vert [\xi^m_n,\xi^m_w]\Vert_{L^2}^2\nonumber\\
&&\!\!\!\!\!\!\!\!\!+ C\Delta t \!\int_{t_{m-1}}^{t_m}\!\!\!\Vert \partial_{tt} {\bf s}(t)\Vert_{(H^1)'}^2 dt+ C\Vert [\xi^m_{\mathbf{u}},\xi^m_{\mathbf{s}}]\Vert_{L^2}^2+ \frac{D_{\bf u}}{4}\Vert \nabla \xi_{\bf u}^{m} \Vert_{L^2}^2 + \frac{D_n}{4}\Vert\nabla \xi_{n}^{m} \Vert_{L^2}^2+ \frac{D_w}{4}\Vert \nabla \xi_{w}^{m} \Vert_{L^2}^2\nonumber\\
&&\!\!\!\!\!\!\!\!\! +C h^{2(r_4+1)}\Big[\displaystyle\frac{1}{\Delta t }\!\int_{t_{m-1}}^{t_m}\!\!\!\Vert \partial_t {\bf s}(t) \Vert_{H^{r_4+1}}^2  dt+\Vert {\bf s}^m \Vert_{H^{r_4 +1}}^2\Big]+ C h^{2(r_4+1)} \Vert {\bf s}^{m-1}\Vert_{H^{r_4+1}}^2  \Vert[\mathbf{u}^{m}, n^m, w^{m}]\Vert_{L^\infty}^2\nonumber\\
&&\!\!\!\!\!\!\!\!\!+C (h^{2(r_3+1)} \Vert c^{m-1}\Vert_{H^{r_3+1}}^2 +\Vert [ c^m-c^{m-1},{\bf s}^m-{\bf s}^{m-1}]\Vert_{L^2}^2)  \Vert[\mathbf{u}^{m}, n^m, w^{m}]\Vert_{L^\infty}^2 \nonumber\\
&&\!\!\!\!\!\!\!\!\!+ C \Vert [\xi^{m-1}_{\bf s}\!, \xi^{m-1}_c]\Vert_{L^2}^2  \Vert[\mathbf{u}^{m}\!, n^m\!, w^{m}]\Vert_{L^\infty}^2+Ch^{2(r+1)}\Vert{\bf s}^{m-1}\Vert_{H^2}\|[\mathbf{u}^{m}\!,\pi^{m}]\|_{H^{r+1}\times H^{r}}^2+ C\Vert{\bf u}^m\Vert_{H^2}^2\Vert\xi_{\bf s}^{m-1}\Vert_{L^2}^2   \nonumber\\
&&\!\!\!\!\!\!\!\!\!  
+C(h^{2(r_1+1)}\Vert n^{m}\Vert_{H^{r_1+1}}^2+h^{2(r_2+1)}\Vert w^{m}\Vert_{H^{r_2+1}}^2 )\Vert c^{m-1}\Vert_{H^2}^2+C(\Vert n^{m}\Vert_{H^2}^2+\Vert w^{m}\Vert_{H^2}^2)\Vert \xi_c^{m-1}\Vert_{L^2}^2.
\end{eqnarray}

\subsubsection{Proof of Theorem \ref{theo1N}}\label{sub1}
\begin{proof}First, we will prove (\ref{EEtheo1}). Observe that it holds
\begin{eqnarray}\label{mm1}
&\displaystyle\Delta t \sum_{m=1}^r \Vert &\!\!\!\!\!\! [n^m - n^{m-1},w^m - w^{m-1},c^m-c^{m-1},\mathbf{u}^m - \mathbf{u}^{m-1},{{\bf s}^m} - {\bf s}^{m-1}]\Vert_{L^2}^2\nonumber\\
&&\!\!\!\!\!\!\!\!\!\!\!\!\leq C(\Delta t)^4 \Vert [\partial_{tt} n, \partial_{tt} w, \partial_{tt} c,\partial_{tt}\mathbf{u},\partial_{tt} {\bf s}]\Vert^2_{L^2(L^2)} + C(\Delta t)^2 \Vert [\partial_t n,\partial_t w, \partial_t c,\partial_t \mathbf{u}, \partial_t {\bf s}]\Vert^2_{L^2(L^2)}.\ \ \
\end{eqnarray}
Indeed, proceeding as in (\ref{Ea1a}), with the space norm in $L^2$ instead of $(H^1)'$, we have
$$
\Vert \rho_n^m\Vert_{L^2}=\Vert \delta_t n^m - (\partial_t n)^m\Vert_{L^2}=\Big\Vert \frac{1}{\Delta t} (n^m- n^{m-1}) - (\partial_t n)^m\Big\Vert_{L^2}\leq C(\Delta t)^{1/2} \Big(\int_{t_{m-1}}^{t_m}\Vert \partial_{tt} n(t)\Vert_{L^2}^2 dt\Big)^{1/2}, 
$$
which implies 
$$
\Delta t \sum_{m=1}^r \Vert n^m - n^{m-1}\Vert_{L^2}^2\leq C(\Delta t)^4 \Vert \partial_{tt} n\Vert^2_{L^2(L^2)} + C(\Delta t)^2 \Vert \partial_t n\Vert^2_{L^2(L^2)}.
$$
Analogously, we obtain the estimate for $w,c, \mathbf{u}$ and ${\bf s}$.
Now, adding  the inequalities (\ref{errnfin}), (\ref{errwfin}), (\ref{errcfin}), (\ref{errufin}) and (\ref{errsfin}), multiplying the resulting expression  by $\Delta t$, adding from $m=1$ to $m=r$, using (\ref{mm1})  and Lemma \ref{uen}  (recalling that $[\xi^0_n, \xi^0_w,\xi^0_c,\xi^0_{\mathbf{u}},\xi^0_{\bf s}]=[0,0,0,{\bf 0},{\bf 0}]$), we get
\begin{eqnarray}\label{GWG}
&\|[\xi^r_n, \xi^r_w,&\!\!\!\!\!\xi^r_c,\xi^r_{\mathbf{u}},\xi^r_{\bf s}]\|_{L^2}^2+\displaystyle\frac{\Delta t}{2}\sum_{m=1}^{r}\Vert[\nabla\xi_n^m,\nabla\xi_w^m,\nabla\xi_c^m,\nabla \xi_{\bf u}^m,\nabla\cdot\xi_{\bf s}^m,\mbox{rot}(\xi_{\bf s}^m)]\Vert_{L^2}^2\nonumber\\
&&\!\!\!\!\!\leq C_1((\Delta t)^2+(\Delta t)^4)+C_2(h^{2(r_1+1)}+h^{2(r_2+1)}+h^{2(r_3+1)}+h^{2(r_4+1)}+h^{2(r+1)})\nonumber\\
&&+C_3\Delta t\sum_{m=1}^{r-1}\Vert[\xi_{n}^{m}, \xi_{w}^{m}, \xi_{c}^{m}, \xi_{\bf s}^{m},\xi_{\bf u}^{m}]\Vert^2_{L^2}+C_4\Delta t\Vert[\xi_{n}^{r}, \xi_{w}^{r}, \xi_{c}^{r}, \xi_{\bf s}^{r},\xi_{\bf u}^{r}]\Vert^2_{L^2}.
\end{eqnarray}
Therefore, if $\frac{1}{2} - C_4 \Delta t > 0$, by applying Lemma \ref{Diego2} to (\ref{GWG}), (\ref{EEtheo1}) is
concluded.\\

Now we prove (\ref{EEtheo2}). {{Subtracting (\ref{errpi-int}) at time $t=t_{m-1}$ from (\ref{errpi-int}) at time $t=t_{m}$ and multiplying the resulting expression by $1/\Delta t$, we have that
\begin{equation}\label{NeeA}
(\bar{\pi},\nabla \cdot \delta_t \xi^m_{\mathbf{u}})=0 \ \ \forall \bar{\pi}\in \Pi_h.
\end{equation}
Then, taking $\bar{\mathbf{u}}=\delta_t \xi_{\mathbf{u}}^m$ in (\ref{erru-int}), $\bar{\pi}=\xi_{\pi}^m$ in (\ref{NeeA}) and adding the resulting expressions, we get
\begin{eqnarray}\label{NeeB}
&\displaystyle \frac{D_{\bf u}}{2}&\!\!\!\!\!\delta_t \Vert \xi_{\mathbf{u}}^{m}\Vert_{H^1}^2+\displaystyle\frac{\Delta t D_{\bf u}}{2}\Vert\delta_t \xi_{\mathbf{u}}^m\Vert_{H^1}^2 + \Vert \delta_t \xi_{\mathbf{u}}^{m}\Vert_{L^2}^2 = (\rho_{\mathbf{u}}^m-\delta_t \theta_{\mathbf{u}}^m,\delta_t \xi_{\mathbf{u}}^m)\nonumber\\
&&\!\!\!\!\! - kB(\mathbf{u}^m-\mathbf{u}^{m-1},{\mathbf{u}}^m,\delta_t \xi_{\mathbf{u}}^m)   -kB(\xi^{m-1}_{\mathbf{u}}\!+\theta^{m-1}_{\mathbf{u}}\!,{\mathbf{u}}^m,\delta_t \xi_{\mathbf{u}}^m) -kB(\mathbf{u}^{m-1}_h\!,\xi^{m}_{\mathbf{u}}+\theta^m_{\mathbf{u}},\delta_t \xi_{\mathbf{u}}^m) \nonumber\\
&&\!\!\!\!\! + ((\gamma(\xi^{m}_n+\theta^{m}_n)+\lambda(\xi^{m}_w+\theta^{m}_w)) \nabla \phi,\delta_t \xi_{\mathbf{u}}^m):=(f,\delta_t \xi_{\mathbf{u}}^m)\leq \frac{1}{2}(\Vert\delta_t \xi_{\mathbf{u}}^m\Vert_{L^2}^2+\Vert f\Vert_{L^2}^2).
\end{eqnarray}
Now, we bound $\Vert f\Vert_{L^2}$. First, note that, for all $\varphi\in L^2,$ we have 
\begin{eqnarray}\label{NNNab0}
&\vert(\rho_{\mathbf{u}}^m&\!\!\!-\delta_t \theta_{\mathbf{u}}^m,\varphi)- kB(\mathbf{u}^m-\mathbf{u}^{m-1},{\mathbf{u}}^m,\varphi)+((\gamma(\xi^{m}_n+\theta^{m}_n)+\lambda(\xi^{m}_w+\theta^{m}_w)) \nabla \phi,\varphi)\vert\nonumber\\
&&\!\!\!\!\! \leq C  \Vert \nabla \phi\Vert_{L^\infty}(\gamma\Vert\xi_{n}^m\Vert_{L^2}+\lambda\Vert\xi_{w}^m\Vert_{L^2}+\gamma\Vert \theta_{n}^{m}\Vert_{L^2}+\lambda\Vert \theta_{w}^{m}\Vert_{L^2})\Vert \varphi\Vert_{L^2}) \nonumber\\
&&+C(\Vert \rho_{\mathbf{u}}^m\Vert_{L^2}+ \Vert \delta_t \theta_{\mathbf{u}}^m\Vert_{L^2}+ k \Vert \mathbf{u}^{m}-\mathbf{u}^{m-1}\Vert_{L^2} \Vert \nabla \mathbf{u}^{m}\Vert_{L^\infty})\Vert \varphi\Vert_{L^2},
\end{eqnarray}
\begin{eqnarray}\label{NNNabc1}
&\vert kB(\xi^{m-1}_{\mathbf{u}}\!+\theta^{m-1}_{\mathbf{u}}\!,{\mathbf{u}}^m,\varphi)\vert&\!\!\! \leq kC(\Vert \xi_{\mathbf{u}}^{m-1}\Vert_{H^1}+\Vert \theta_{\mathbf{u}}^{m-1}\Vert_{H^1}) \Vert \mathbf{u}^{m}\Vert_{L^\infty\cap W^{1,3}}\Vert \varphi\Vert_{L^2}
\end{eqnarray}
and}
\begin{eqnarray}\label{NNNabcd2}
&\vert B(\mathbf{u}^{m-1}_h\!,\xi^{m}_{\mathbf{u}}+\theta^m_{\mathbf{u}},\varphi)\vert&\!\!\!=\vert B(\mathbb{P}_{\mathbf{u}} \mathbf{u}^{m-1}\!,\xi^{m}_{\mathbf{u}}+\theta^m_{\mathbf{u}},\varphi)-B(\xi_{\mathbf{u}}^{m-1}\!,\xi^{m}_{\mathbf{u}}+\theta^m_{\mathbf{u}},\varphi)\vert \nonumber\\
&&\hspace{-3.3 cm} \leq k( C\Vert \mathbb{P}_{\mathbf{u}} \mathbf{u}^{m-1}\Vert_{L^\infty\cap W^{1,3}}(\Vert \theta_{\mathbf{u}}^{m}\Vert_{H^1}+ \Vert \xi_{\mathbf{u}}^{m}\Vert_{H^1})\Vert \varphi\Vert_{L^2}+ C \Vert \xi_{\mathbf{u}}^{m-1}\Vert_{H^1} \Vert \theta_{\mathbf{u}}^{m}\Vert_{L^\infty\cap W^{1,3}} \Vert \varphi\Vert_{L^2}\nonumber\\
&&\hspace{-3 cm}+C\Vert \nabla \xi_{\mathbf{u}}^{m}\Vert_{L^3}\Vert \xi_{\mathbf{u}}^{m-1}\Vert_{L^{6}}\Vert \varphi\Vert_{L^2}+C\Vert \nabla \xi_{\mathbf{u}}^{m-1}\Vert_{L^3}\Vert \xi_{\mathbf{u}}^{m}\Vert_{L^{6}}\Vert \varphi\Vert_{L^2})
\nonumber\\
&&\hspace{-3.3 cm} \leq k( C\Vert \mathbb{P}_{\mathbf{u}} \mathbf{u}^{m-1}\Vert_{L^\infty\cap W^{1,3}}(\Vert \theta_{\mathbf{u}}^{m}\Vert_{H^1}+ \Vert \xi_{\mathbf{u}}^{m}\Vert_{H^1})\Vert \varphi\Vert_{L^2}+ C \Vert \xi_{\mathbf{u}}^{m-1}\Vert_{H^1} \Vert \theta_{\mathbf{u}}^{m}\Vert_{L^\infty\cap W^{1,3}} \Vert \varphi\Vert_{L^2}\nonumber\\
&&\hspace{-3 cm}+C\Vert \xi_{\mathbf{u}}^{m}\Vert_{W^{1,6}}^{1/2}\Vert \xi_{\mathbf{u}}^{m}\Vert_{H^1}^{1/2}\Vert \xi_{\mathbf{u}}^{m-1}\Vert_{H^{1}}\Vert \varphi\Vert_{L^2}+C\Vert \xi_{\mathbf{u}}^{m-1}\Vert_{W^{1,6}}^{1/2}\Vert \xi_{\mathbf{u}}^{m-1}\Vert_{H^1}^{1/2}\Vert \xi_{\mathbf{u}}^{m}\Vert_{H^1}\Vert \varphi\Vert_{L^2}),
\end{eqnarray}
where, in (\ref{NNNabcd2}), the 3D interpolation inequality (\ref{in3D}) was used. Thus, from (\ref{NNNab0})-(\ref{NNNabcd2}) we deduce
\begin{eqnarray}
\Vert f\Vert_{L^2}&\!\!\!\!\!\!\!=\!\!\!\!\!\!&\sup\{\vert (f,\varphi)\vert\ :\ \varphi\in L^2(\Omega),\ \Vert \varphi\Vert_{L^2}\leq 1\}\nonumber\\
&&\!\!\!\!\!\!\!\!\!\leq  C ( \Vert \rho_{\mathbf{u}}^m\Vert_{L^2}+ \Vert \delta_t \theta_{\mathbf{u}}^m\Vert_{L^2}+  k\Vert \mathbf{u}^{m}-\mathbf{u}^{m-1}\Vert_{L^2} \Vert \nabla \mathbf{u}^{m}\Vert_{L^\infty}\nonumber\\
&&\!\!\!\!\!+(\Vert \xi_{\mathbf{u}}^{m-1}\Vert_{H^1}+\Vert \theta_{\mathbf{u}}^{m-1}\Vert_{H^1}) \Vert \mathbf{u}^{m}\Vert_{L^\infty\cap W^{1,3}} + \Vert \xi_{\mathbf{u}}^{m-1}\Vert_{H^1} \Vert \theta_{\mathbf{u}}^{m}\Vert_{L^\infty\cap W^{1,3}}\nonumber\\
&&\!\!\!\! \! + \Vert \mathbb{P}_{\mathbf{u}} \mathbf{u}^{m-1}\Vert_{L^\infty\cap W^{1,3}}(\Vert \theta_{\mathbf{u}}^{m}\Vert_{H^1}+ \Vert \xi_{\mathbf{u}}^{m}\Vert_{H^1})  +\Vert \xi_{\mathbf{u}}^{m}\Vert_{W^{1,6}}^{1/2}\Vert \xi_{\mathbf{u}}^{m}\Vert_{H^1}^{1/2}\Vert \xi_{\mathbf{u}}^{m-1}\Vert_{H^{1}} \nonumber\\
&&\!\!\!\!\! +\Vert \xi_{\mathbf{u}}^{m-1}\Vert_{W^{1,6}}^{1/2}\Vert \xi_{\mathbf{u}}^{m-1}\Vert_{H^1}^{1/2}\Vert \xi_{\mathbf{u}}^{m}\Vert_{H^1}+(\Vert\xi_{n}^m\Vert_{L^2}+\Vert\xi_{w}^m\Vert_{L^2}+\Vert \theta_{n}^{m}\Vert_{L^2}+\Vert \theta_{w}^{m}\Vert_{L^2}) \Vert \nabla \phi\Vert_{L^\infty}).\label{z1}
\end{eqnarray}
Therefore from (\ref{NeeB}) and (\ref{z1}), we arrive at 
\begin{eqnarray}\label{NNabcde}
&\displaystyle \frac{D_{\bf u}}{2}&\!\!\!\!\!\delta_t \Vert \xi_{\mathbf{u}}^{m}\Vert_{H^1}^2 + \frac{1}{2}\Vert \delta_t \xi_{\mathbf{u}}^{m}\Vert_{L^2}^2\leq \frac{1}{2} \Vert f\Vert_{L^2}^2 \leq C ( \Vert \rho_{\mathbf{u}}^m\Vert_{L^2}^2+ \Vert \delta_t \theta_{\mathbf{u}}^m\Vert_{L^2}^2+  \Vert \mathbf{u}^{m}-\mathbf{u}^{m-1}\Vert_{L^2}^2 \Vert \nabla \mathbf{u}^{m}\Vert_{L^\infty}^2\nonumber\\
&&\!\!\!\!\!\!\!\!+(\Vert \xi_{\mathbf{u}}^{m-1}\Vert_{H^1}^2+\Vert \theta_{\mathbf{u}}^{m-1}\Vert_{H^1}^2) \Vert \mathbf{u}^{m}\Vert_{L^\infty\cap W^{1,3}}^2 + \Vert \xi_{\mathbf{u}}^{m-1}\Vert_{H^1}^2 \Vert \theta_{\mathbf{u}}^{m}\Vert_{L^\infty\cap W^{1,3}}^2\nonumber\\
&& \!\!\!\!\!\!\!\! + \Vert \mathbb{P}_{\mathbf{u}} \mathbf{u}^{m-1}\Vert_{L^\infty\cap W^{1,3}}^2(\Vert \theta_{\mathbf{u}}^{m}\Vert_{H^1}^2+ \Vert \xi_{\mathbf{u}}^{m}\Vert_{H^1}^2))  + \varepsilon_1(\Vert \xi_{\mathbf{u}}^{m}\Vert_{W^{1,6}}^2 +  \Vert \xi_{\mathbf{u}}^{m-1}\Vert_{W^{1,6}}^2 )\nonumber\\
&&\!\!\!\!\!\!\!\! + C( \Vert \xi_{\mathbf{u}}^{m}\Vert_{H^1}^2(\Vert \xi_{\mathbf{u}}^{m-1}\Vert_{H^1}^4+\Vert \xi_{\mathbf{u}}^{m-1}\Vert_{H^1}^2\Vert \xi_{\mathbf{u}}^{m}\Vert_{H^1}^2) + (\Vert\xi_{n}^m\Vert_{L^2}^2+\Vert\xi_{w}^m\Vert_{L^2}^2+\Vert \theta_{n}^{m}\Vert_{L^2}^2+\Vert \theta_{w}^{m}\Vert_{L^2}^2) \Vert \nabla \phi\Vert_{L^\infty}^2). \ \ \ \ \ \ \
\end{eqnarray}
On the other hand, applying Lemma 11 of \cite{GV} to (\ref{erru-int})-(\ref{errpi-int}), we have
\begin{equation}\label{errPI1}
\Vert [\xi_{\mathbf{u}}^m,\xi_{\pi}^m]\Vert_{W^{1,6}\times L^6}^2 \leq C (\Vert \delta_t \xi_{\mathbf{u}}^m\Vert_{L^2}^2+ \Vert f\Vert_{L^2}^2),
\end{equation}
and then, adding (\ref{NNabcde}) with (\ref{errPI1})$\times \varepsilon_2$ (for $0<\varepsilon_2<1$), we arrive at}
\begin{eqnarray}\label{errPI2}
& \displaystyle \frac{D_{\bf u}}{2} &\!\!\!\! \delta_t \Vert \xi_{\mathbf{u}}^{m}\Vert_{H^1}^2 + \frac{1}{2}\Vert \delta_t \xi_{\mathbf{u}}^{m}\Vert_{L^2}^2+ \varepsilon_2\Vert [\xi_{\mathbf{u}}^m,\xi_{\pi}^m]\Vert_{W^{1,6}\times L^6}^2 \leq C (\varepsilon_2 \Vert \delta_t \xi_{\mathbf{u}}^m\Vert_{L^2}^2+ \Vert \rho_{\mathbf{u}}^m\Vert_{L^2}^2+ \Vert \delta_t \theta_{\mathbf{u}}^m\Vert_{L^2}^2\nonumber\\
&&\!\!\!\!\!\!\!+  \Vert \mathbf{u}^{m}-\mathbf{u}^{m-1}\Vert_{L^2}^2 \Vert \nabla \mathbf{u}^{m}\Vert_{L^\infty}^2+(\Vert \xi_{\mathbf{u}}^{m-1}\Vert_{H^1}^2+\Vert \theta_{\mathbf{u}}^{m-1}\Vert_{H^1}^2) \Vert \mathbf{u}^{m}\Vert_{L^\infty\cap W^{1,3}}^2 + \Vert \xi_{\mathbf{u}}^{m-1}\Vert_{H^1}^2 \Vert \theta_{\mathbf{u}}^{m}\Vert_{L^\infty\cap W^{1,3}}^2\nonumber\\
&&\!\!\!\!\!\!\!  + \Vert \mathbb{P}_{\mathbf{u}} \mathbf{u}^{m-1}\Vert_{L^\infty\cap W^{1,3}}^2(\Vert \theta_{\mathbf{u}}^{m}\Vert_{H^1}^2+ \Vert \xi_{\mathbf{u}}^{m}\Vert_{H^1}^2) ) + \varepsilon_1(\Vert \xi_{\mathbf{u}}^{m}\Vert_{W^{1,6}}^2 +  \Vert \xi_{\mathbf{u}}^{m-1}\Vert_{W^{1,6}}^2 )\nonumber\\
&& \!\!\!\!\!\!\! + C(\Vert \xi_{\mathbf{u}}^{m}\Vert_{H^1}^2(\Vert \xi_{\mathbf{u}}^{m-1}\Vert_{H^1}^4+\Vert \xi_{\mathbf{u}}^{m-1}\Vert_{H^1}^2\Vert \xi_{\mathbf{u}}^{m}\Vert_{H^1}^2) + (\Vert\xi_{n}^m\Vert_{L^2}^2+\Vert\xi_{w}^m\Vert_{L^2}^2+\Vert \theta_{n}^{m}\Vert_{L^2}^2+\Vert \theta_{w}^{m}\Vert_{L^2}^2) \Vert \nabla \phi\Vert_{L^\infty}^2). \ \ \ \ \ \ \ \
\end{eqnarray}
We consider $\varepsilon_2$ small enough in order to absorb the term $C \varepsilon_2\Vert \delta_t \xi_{\mathbf{u}}^m\Vert_{L^2}^2$ at the right hand side, and $\varepsilon_1$ small enough with respect to $\varepsilon_2$. Moreover, notice that 
\begin{equation}\label{pre2}
\Vert \xi_{\mathbf{u}}^{m-1}\Vert_{H^1}^4+\Vert \xi_{\mathbf{u}}^{m-1}\Vert_{H^1}^2\Vert \xi_{\mathbf{u}}^{m}\Vert_{H^1}^2\leq C.
\end{equation}
Indeed, denoting $h_{\max}:=\max\{h^{r_1+1},h^{r_2+1},h^{r_3+1},h^{r_4+1},
h^{r+1} \}$, from estimate (\ref{EEtheo1}) we have in particular that $\|\xi^m_{\mathbf{u}}\|_{H^1}^2 \leq C(T) \Big(\Delta t +\frac{1}{\Delta t}h_{\max}^2\Big)$, which implies that
$$
\Vert \xi_{\mathbf{u}}^{m-1}\Vert_{H^1}^4+\Vert \xi_{\mathbf{u}}^{m-1}\Vert_{H^1}^2\Vert \xi_{\mathbf{u}}^{m}\Vert_{H^1}^2 \leq C(T) \Big(\Delta t +\frac{1}{\Delta t}h_{\max}^2\Big) ^2.
$$
Therefore, we conclude (\ref{pre2}) under the  condition
\begin{equation}\label{HHa}
\frac{h_{\max}^4}{(\Delta t)^2}\leq C.
\end{equation}
On the other hand, from (\ref{EEtheo1}) we also have $\|\xi^m_{\mathbf{u}}\|_{L^2}^2 \leq C(T) \Big((\Delta t)^2 +h_{\max}^2\Big)$ for each $m$. Then, by using the inverse inequality $\Vert \xi^m_{\mathbf{u}}\Vert_{H^1}\leq h^{-1} \Vert \xi^m_{\mathbf{u}}\Vert_{L^2}$ we obtain
$$
\Vert \xi_{\mathbf{u}}^{m-1}\Vert_{H^1}^4+\Vert \xi_{\mathbf{u}}^{m-1}\Vert_{H^1}^2\Vert \xi_{\mathbf{u}}^{m}\Vert_{H^1}^2 \leq C(T) \frac{1}{h^4}\Big((\Delta t)^2 +h_{\max}^2\Big)^2.
$$
Thus, we conclude (\ref{pre2}) under the condition 
\begin{equation}\label{HHb}
\frac{(\Delta t)^4}{h^4}\leq C.
\end{equation}
Therefore, {if $\Delta t$ and $h$ are less than or equal to 1,} we conclude (\ref{pre2})  because for any choice of $(\Delta t,h)$ either (\ref{HHa}) or (\ref{HHb}) holds. Therefore, multiplyng (\ref{errPI2}) by $\Delta t$, adding from $m=1$ to $m=s$, bounding the terms $\Vert \rho_{\mathbf{u}}^m\Vert_{L^2}^2$ and $\Vert \delta_t \theta_{\mathbf{u}}^m\Vert_{L^2}^2$ as in (\ref{Ea1au}), using  (\ref{StabStk1}), (\ref{StabStk2}), (\ref{aprox01})$_{1,2}$, (\ref{mm1}), (\ref{EEtheo1}), (\ref{pre2}) and taking into account that $\xi^0_{\mathbf{u}}={\bf 0}$, we conclude (\ref{EEtheo2}).
\end{proof}

Finally, we will check the validity of (\ref{IndHyp}).
The idea is to derive (\ref{IndHyp}) by using (\ref{EEtheo1}) recursively. First, notice that $\Vert \mathbb{P}_{c}{c}^{m-1}\Vert_{L^{10/3}}\leq \Vert {c}\Vert_{L^\infty(L^{10/3})}:=C_0$ for all $m\geq 1$, from which we deduce that $\Vert {c}^{0}_h\Vert_{L^{10/3}}= \Vert \mathbb{P}_{c} {c}_0 \Vert_{L^{10/3}}\leq C_0\leq C_0+1:=K$. Moreover, 
$\Vert {c}^{m-1}_h\Vert_{L^{10/3}}\leq \Vert \xi^{m-1}_{c}\Vert_{L^{10/3}}+\Vert \mathbb{P}_{c}{c}^{m-1}\Vert_{L^{10/3}}$, and then,  it is enough to show that $\Vert  \xi^{m-1}_{c}\Vert_{L^{10/3}}\leq 1$ for all $m\geq 2.$ For that, we consider two cases: First,  if $\frac{(\Delta t)^{1/2}}{h^{3/5}}\leq C,$ then, by using the inverse inequality  $\Vert \xi^m_{c}\Vert_{L^{10/3}}\leq h^{-3/5} \Vert \xi^m_{c}\Vert_{L^2},$ recalling that $h_{\max}:=\max\{h^{r_1+1},h^{r_2+1},h^{r_3+1}, h^{r_4+1}, 
h^{r+1} \}= h^k$ (for some $k\geq 2$), and from the calculation of the $l^\infty(L^2)$-norm in (\ref{EEtheo1}), we get 
\begin{eqnarray}\label{j1}
\Vert \xi^1_{c}\Vert_{L^{10/3}}\leq \frac{1}{h^{3/5}}\Vert \xi^1_{c}\Vert_{L^2}\leq C(T,\Vert {c}^0_h\Vert_{L^{10/3}})\frac{1}{h^{3/5}}(\Delta t+h_{\max})\leq  C(T,K)(C(\Delta t)^{1/2}+h^{k-3/5}).
\end{eqnarray}
In another case, if $\frac{(\Delta t)^{1/2}}{h^{3/5}}$ is not bounded, then $\frac{h_{\max}}{\Delta t}\leq \frac{h^{6/5}}{\Delta t}\leq C;$ thus, from the calculation of the $l^2(H^1)$-norm in (\ref{EEtheo1}), we obtain
\begin{eqnarray}\label{j2}
\Vert \xi^1_{c}\Vert^2_{L^{10/3}}\leq C(T,\Vert {c}^0_h\Vert_{L^{10/3}})\frac{1}{\Delta t}((\Delta t)^{2}+h_{\max}^2)\leq  C(T,K)(\Delta t+C h_{\max}).
\end{eqnarray}
In any case, assuming $\Delta t$ and $h$ small enough (without any additional restriction relating the discrete parameters $(\Delta t,h)$), we conclude that $\Vert  \xi^{1}_{c}\Vert_{L^{10/3}}\leq 1$, which implies $\Vert {c}^{1}_h\Vert_{L^{10/3}}\leq K$.  Analogously, by using $\Vert {c}^{1}_h\Vert_{L^{10/3}}\leq K$, we can obtain $\Vert  \xi^{2}_{c}\Vert_{L^{10/3}}\leq 1$, and therefore, $\Vert {c}^{2}_h\Vert_{L^{10/3}}\leq K$. Arguing recursively we conclude that $\Vert {c}^{m-1}_h\Vert_{L^{10/3}}\leq K$, for all $m\geq 1$. Analogously, the inductive hypothesis on ${\bf s}^{m-1}_h$ can be verified.

\section{Numerical simulations} \label{SS5NS}
In this section, we present two numerical experiments. The first one is used to verify that the scheme (\ref{scheme1}) gives a good approximation to chemotaxis and interspecies competition phenomena in a liquid environment, including numerical evidence of the convergence of the densities $n^m_h$ and $w^m_h$ towards steady states proved in \cite{HYJ}, depending on the $a_i$-values ($i=1,2$).  The second one has been considered in order to check numerically the error estimates proved in our theoretical analysis. All the numerical results are computed by using the software Freefem++. We have considered finite element spaces for $n,w,c,{\bf s},{\bf u}$ and $\pi$ generated by $\mathbb{P}_1,\mathbb{P}_1,\mathbb{P}_1,\mathbb{P}_1,\mathbb{P}_1-bubble,\mathbb{P}_1$-continuous, respectively.\\

\underline{\it{Test 1}}:\
We consider the cubic domain $\Omega=[0,1]^3,$ and the initial conditions (see Figure \ref{ICs}):
\begin{eqnarray*}
n_0 &=& \sum_{i=1}^3  \Big(120 \, \text{exp} (-3(x+r_i)^2-12(y-0.5)^2-3(z+1)^2)\Big),\\
w_0 &=& \sum_{i=1}^3  \Big(8 \, \text{exp} (-5(x+r_i)^2-10(y-0.5)^2-12(z-1)^2)\Big),\\
c_0 &=& \sum_{i=1}^3  \Big(180 \, \text{exp} \displaystyle{(-2(x-2.5)^2-1.5(y-{\sigma}_i)^2-2(z-\tau_i)^2)\Big)},\\
\mathbf{u}_0&=&{\bf 0},
\end{eqnarray*}
where $r_1=0.2$, $r_2=0.5$, $r_3=1.2$, $\sigma_1=1.5$, $\sigma_2=1.8$, $\sigma_3=2.5$, $\tau_1=0.5$, $\tau_2=0.8$ and $\tau_3=1.5$. The numerical solutions are computed with a $10\times10\times10$ grid in space and time step $\Delta t= 0.001$. Additionally, we consider the parameters values $\chi_1=12$, $\chi_2=15$, $D_n=6$, $D_w=8$, $D_c=1$, $D_{\bf u}=1$, $\mu_1=0.5$, $\mu_2=0.3$, $\alpha=6$, $k=1$, $\beta=8$, $\gamma=1$, $\lambda=1$ and $\phi(x,y,z)=-9.8z$. We want to see the behaviour of the scheme (\ref{scheme1}) for different values of the  parameters $a_1$ and $a_2$ which quantify the strength of competitions.
\\

\bigskip
\begin{minipage}{\textwidth}
	\begin{center}
\begin{tabular}{ccc}
 \includegraphics[width=38mm]{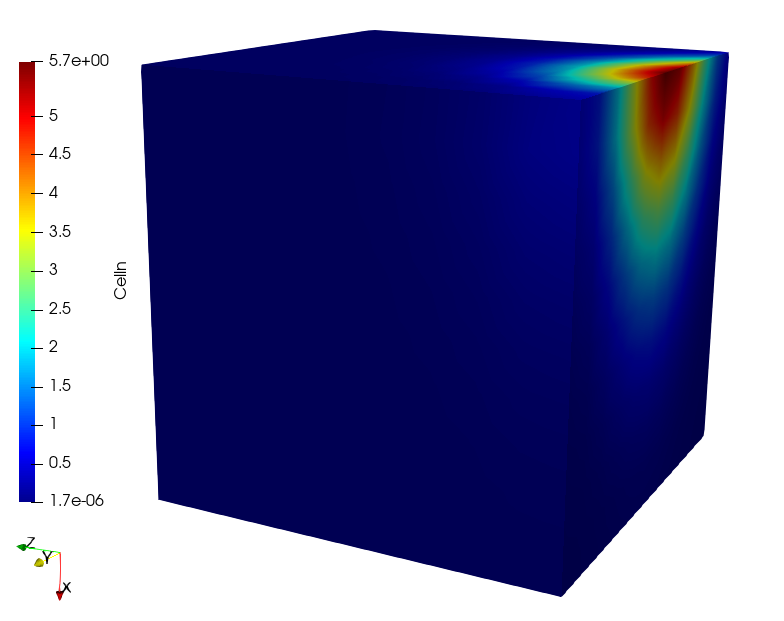} & \includegraphics[width=38mm]{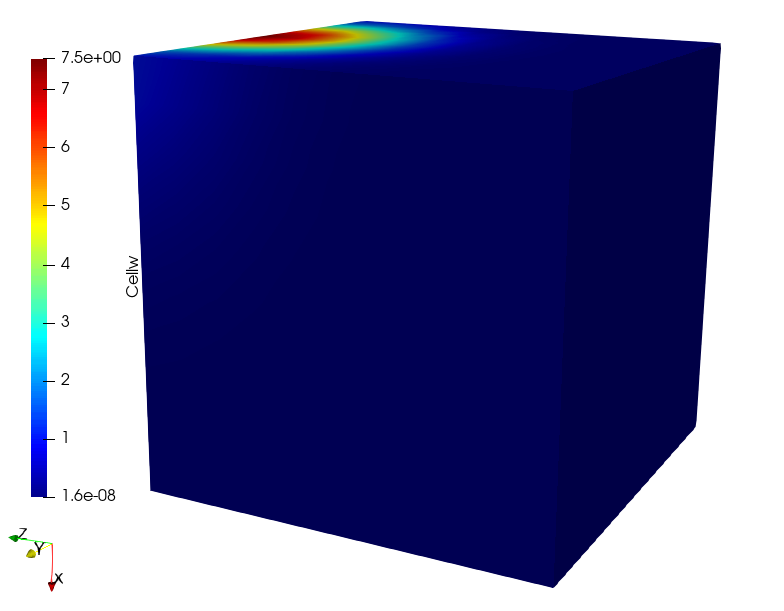} & \includegraphics[width=38mm]{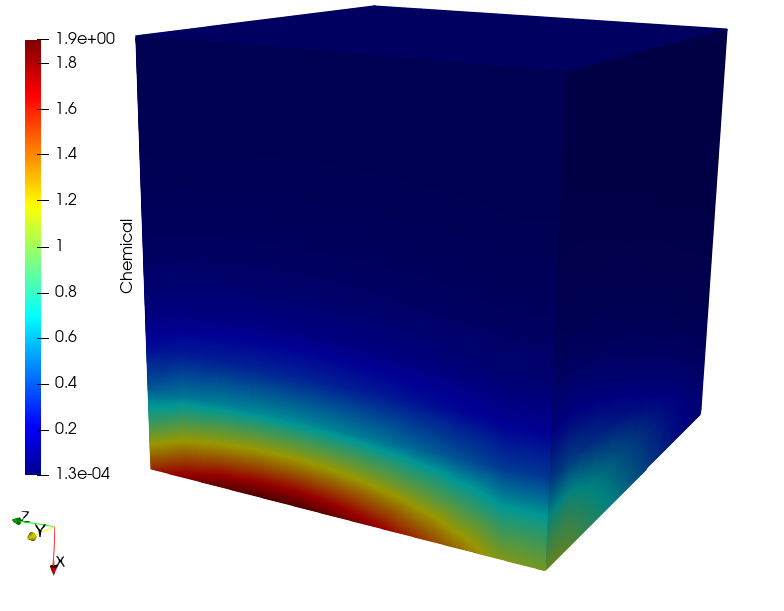}\\[1mm]  (i) $n^0_h$  & (ii) $w^0_h$ & (iii) $c^0_h $   \\[-2mm]
\end{tabular}
\end{center}
\figcaption{Intial conditions in Test 1.} \label{ICs}
\end{minipage}

\

\

 In the first case,  we consider $a_1=0.25$ and $a_2=0.3$. The evolution of the velocity field is shown in Figure \ref{fig:VF}, while the evolution  of $n^m_h$, $w^m_h$ and $c^m_h$ can be seen in Figure  \ref{fig:NWC}.  At the initial times, we observe that both species of organisms begin to direct their movement in the direction of greater concentration of the chemical substance and both species end up agglomerating at one of the lower ends of the domain, which  occurs because the cross-diffusion terms (or chemotaxis terms) are the dominant. However, as time progresses, the chemical substance is consumed by the species, which causes that the cross-diffusion loses strength, and the self-diffusion of the species begins to dominate, and therefore they begin to distribute themselves homogeneously over the domain. Furthermore, as both parameters $a_1$ and $a_2$ are less than $1$, neither species ends up dominating the other, and thus neither species tends towards extinction. In fact, in Figures \ref{fig:NWC} and \ref{fig:DT}, it can be observed that, for large times, the total density of each species of organisms tends to the constant states provided in \cite{HYJ}. Finally, the movement of the species cause some changes in the velocity field and, for large times, ${\bf u}^m_h$ tends to $0$, which is in agreement with the results proven in \cite{HYJ}.

\begin{center}
\begin{tabular}{ccc}
\includegraphics[width=38mm]{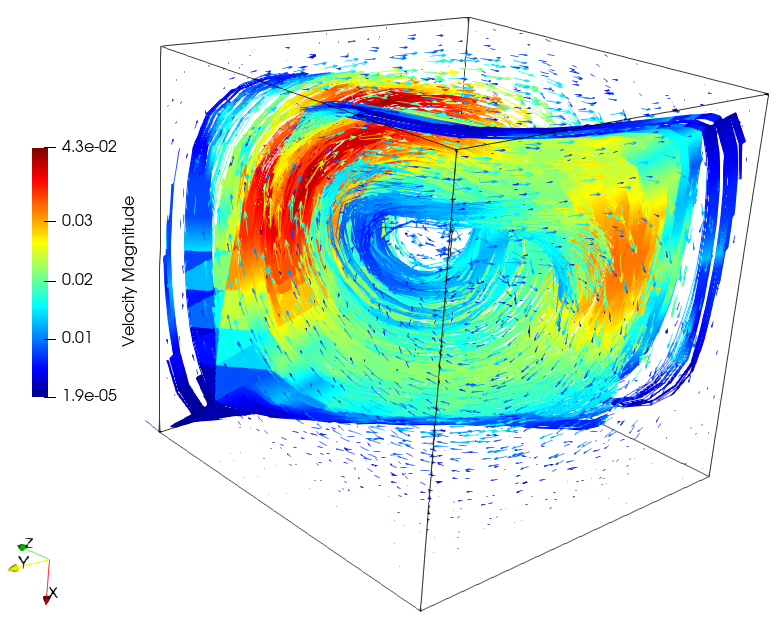} &
\includegraphics[width=38mm]{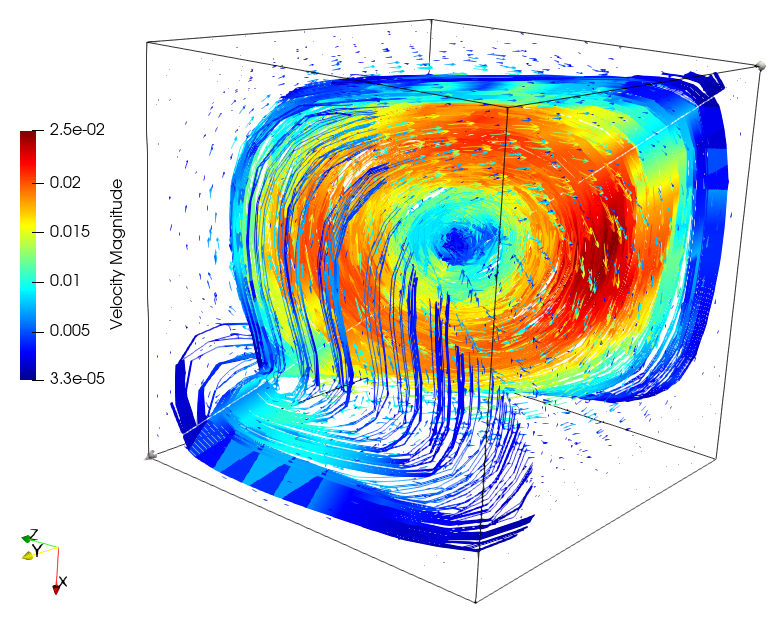} & \includegraphics[width=38mm]{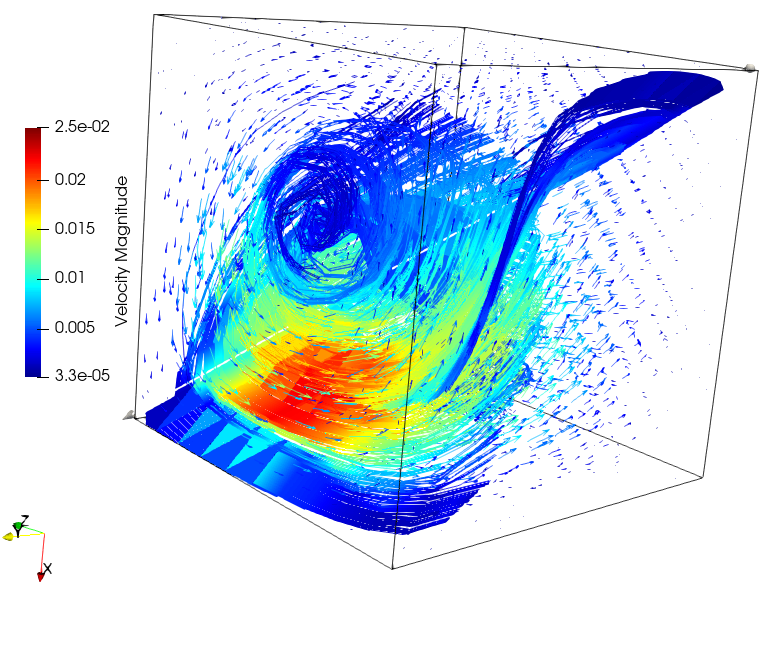} \\[-1mm]
 a)  $t=10^{-2}$ & b) $t=2.5\times10^{-2}$ & c)  $t=5\times10^{-2}$ \\[1mm]
\includegraphics[width=38mm]{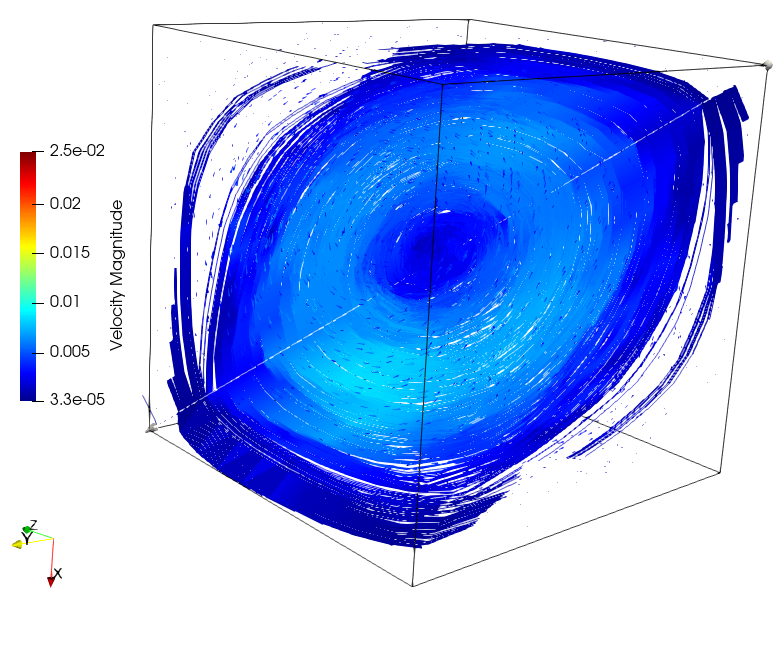} &\includegraphics[width=38mm]{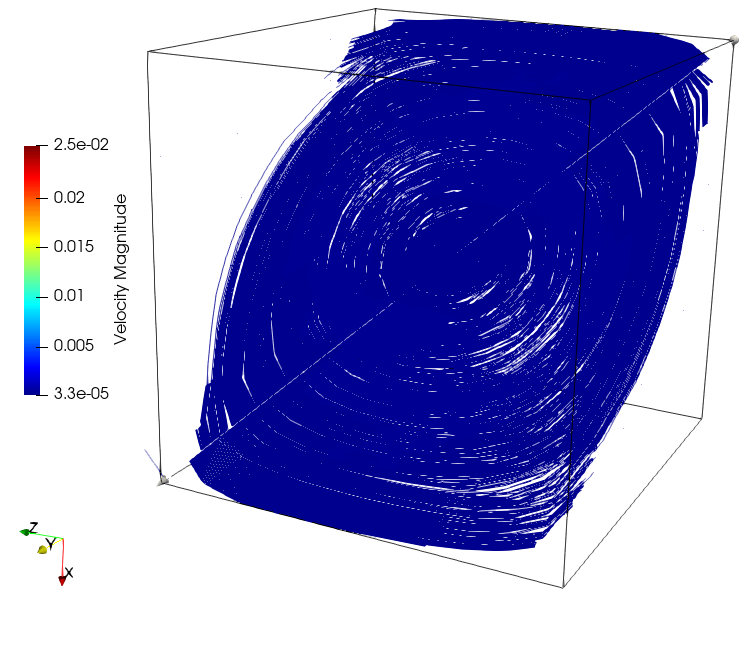} &  \includegraphics[width=38mm]{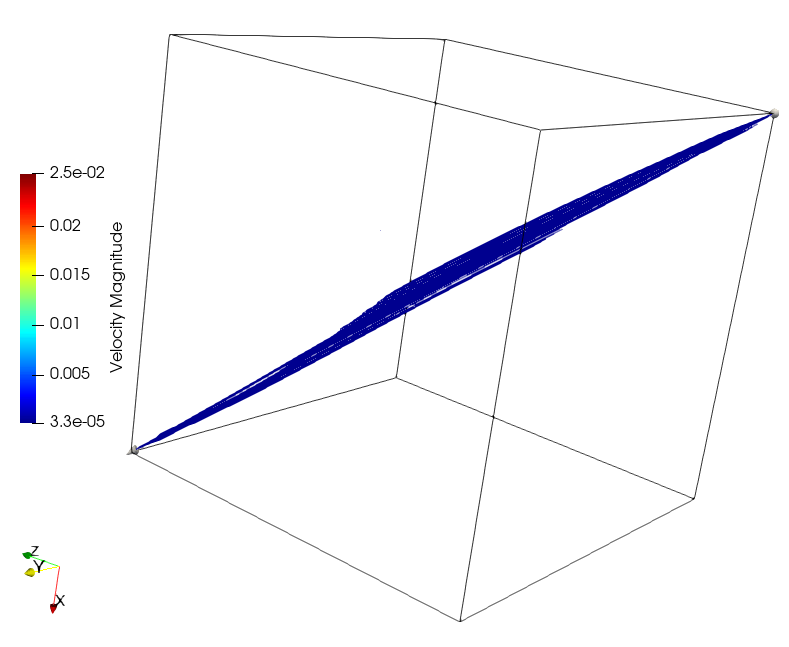} \\[-5mm]
 d) $t=15\times10^{-2}$ &  e) $t=5\times10^{-1}$ & f) $t=15\times10^{-1}$ 
\\ [1mm]
 \end{tabular}
\figcaption{Evolution in time of the velocity field ${\bf u}^m_h$, for $a_1=0.25$ and $a_2=0.3$.}\label{fig:VF}
\end{center}

\bigskip
\begin{minipage}{\textwidth}
	\begin{center}
\begin{tabular}{cccc}
\hspace{-0.8cm}\textbf{Time}&  $n^m_h$  & $w^m_h$ &  $c^m_h$  \\[1mm]
\hspace{-0.8cm}{$t=10^{-2}$} &\includegraphics[width=38mm]{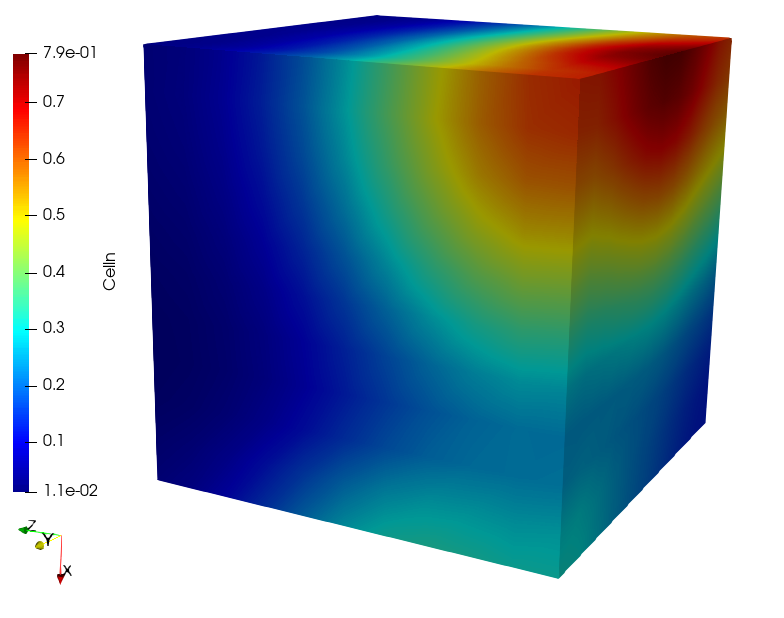} & \includegraphics[width=38mm]{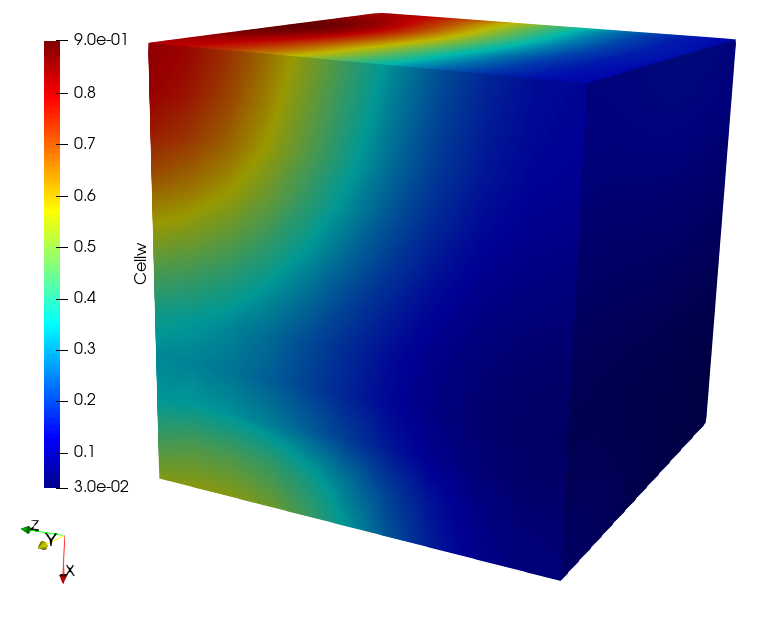}& \includegraphics[width=38mm]{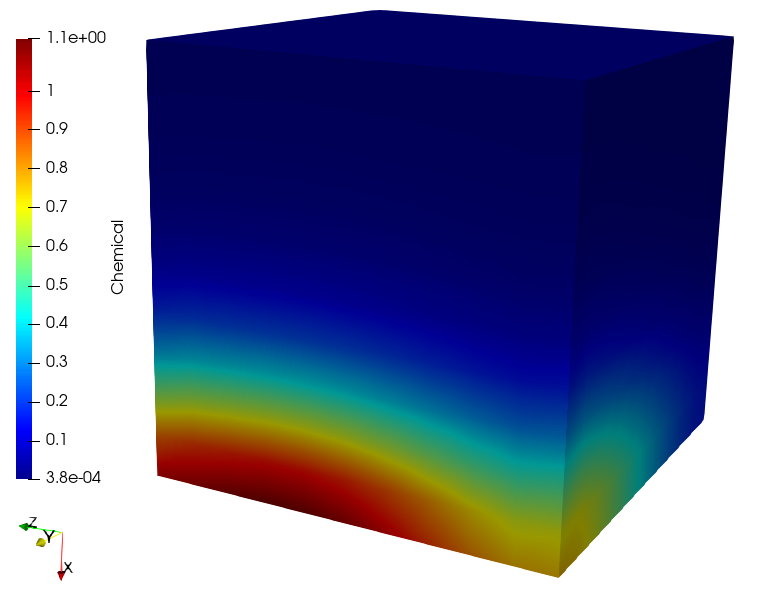}  \\[1mm]
\hspace{-0.8cm}{$t=2.5\times10^{-2}$} &\includegraphics[width=38mm]{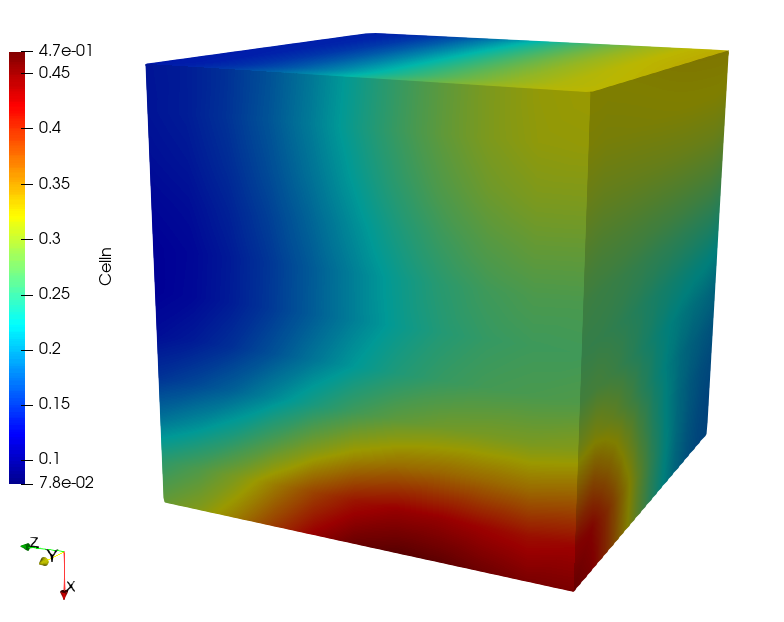} & \includegraphics[width=38mm]{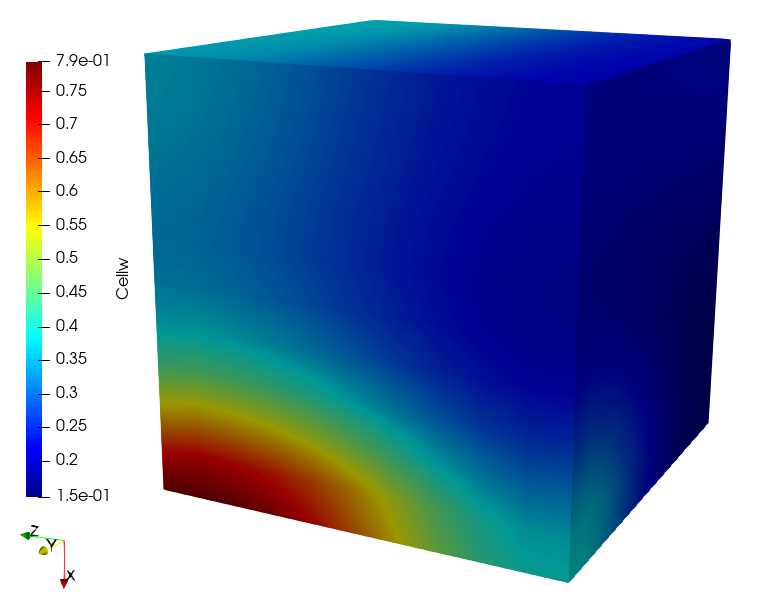}& \includegraphics[width=38mm]{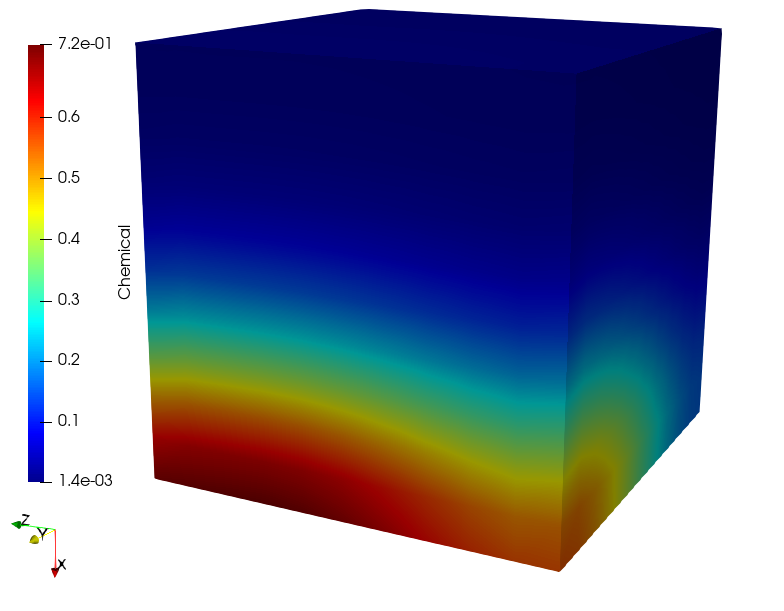}  \\[1mm]
\hspace{-0.8cm}{$t=5\times10^{-2}$}&\includegraphics[width=38mm]{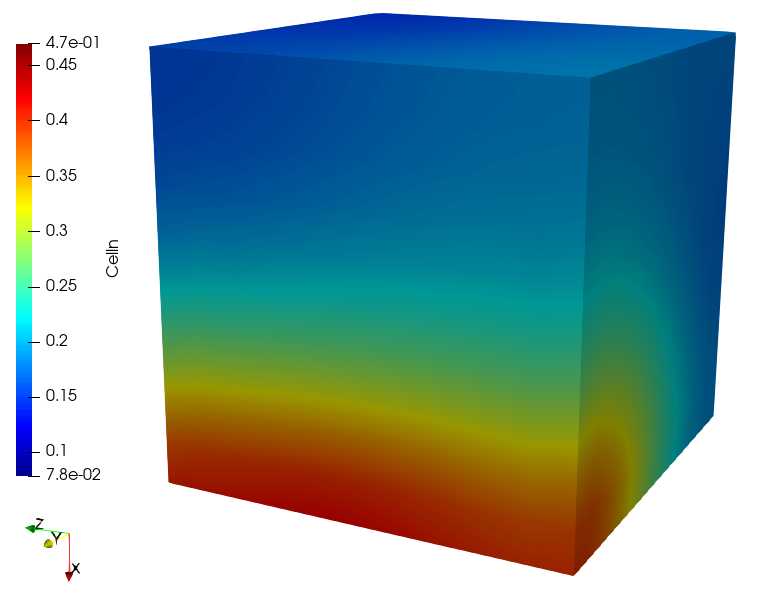} & \includegraphics[width=38mm]{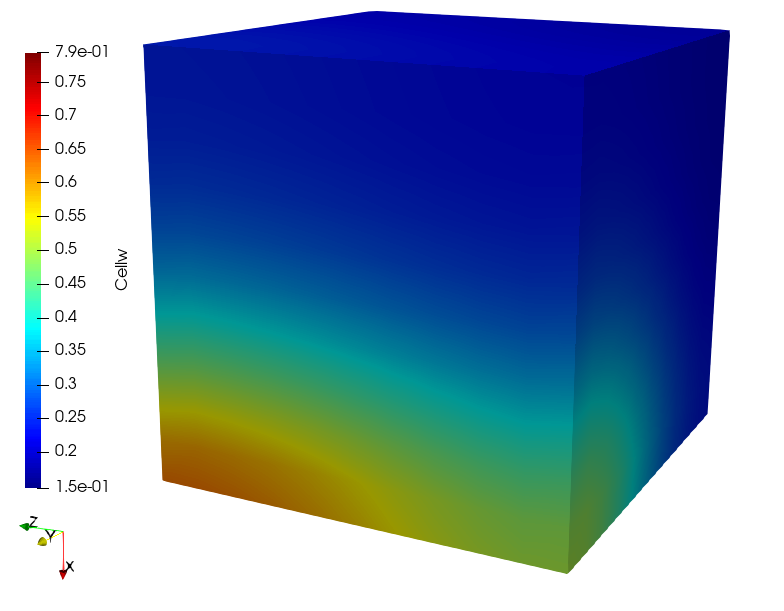}& \includegraphics[width=38mm]{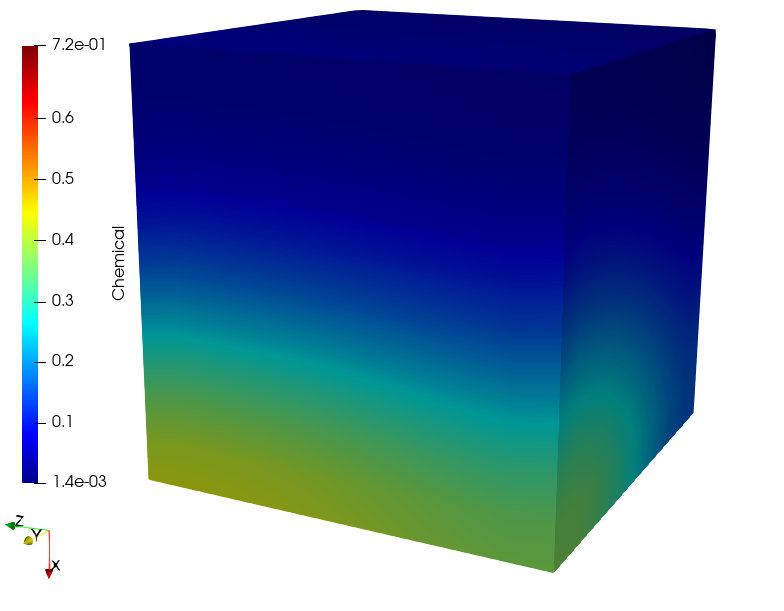}  \\[1mm]
\hspace{-0.8cm}{$t=15\times10^{-2}$}&\includegraphics[width=38mm]{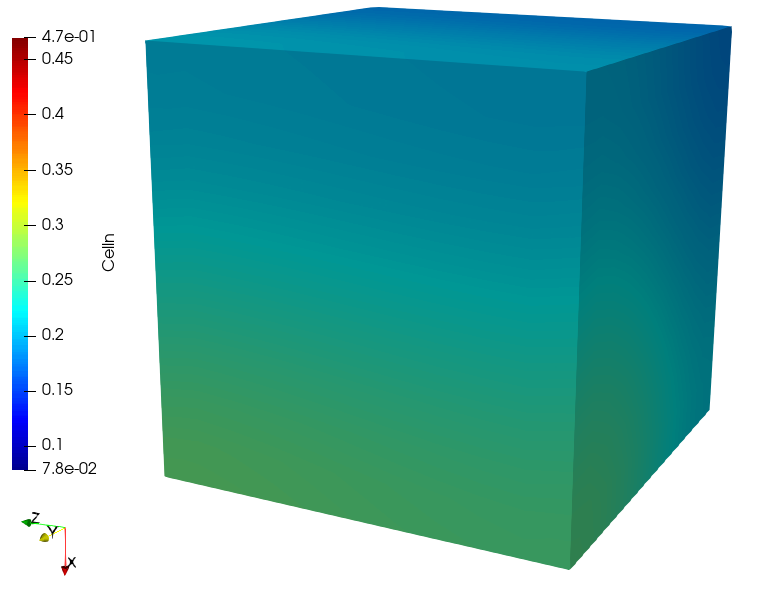} & \includegraphics[width=38mm]{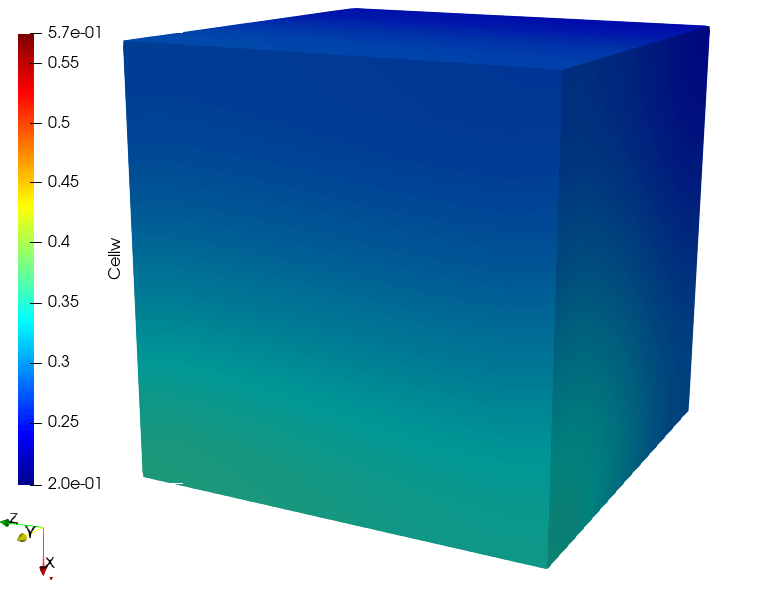} & \includegraphics[width=38mm]{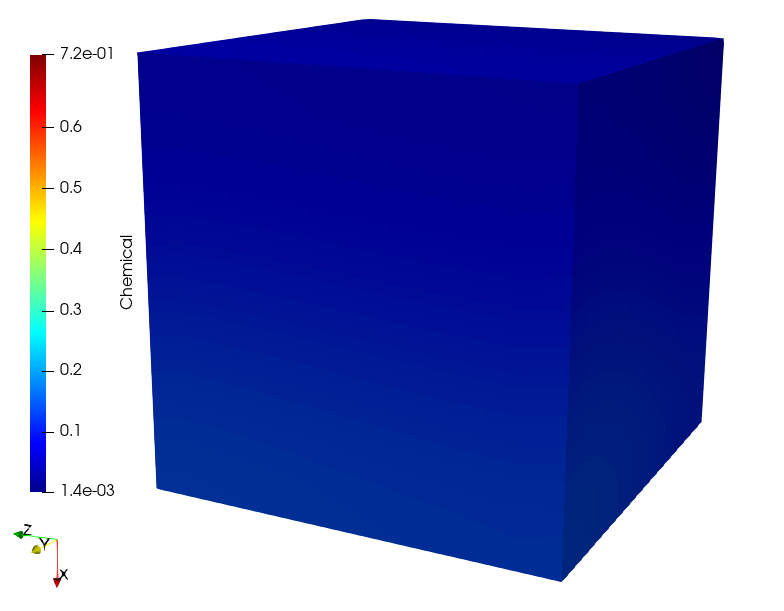} \\[1mm]
\hspace{-0.8cm}{$t=18$} &\includegraphics[width=38mm]{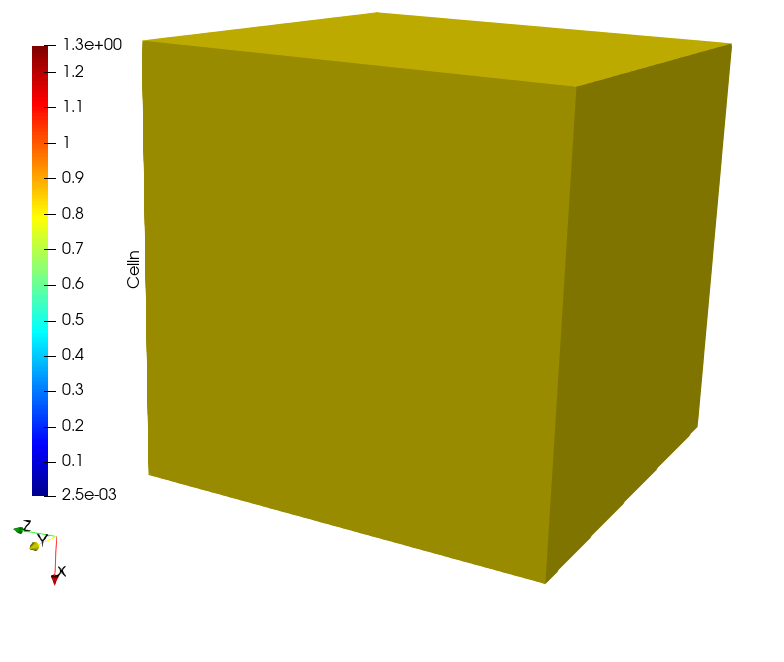} & \includegraphics[width=38mm]{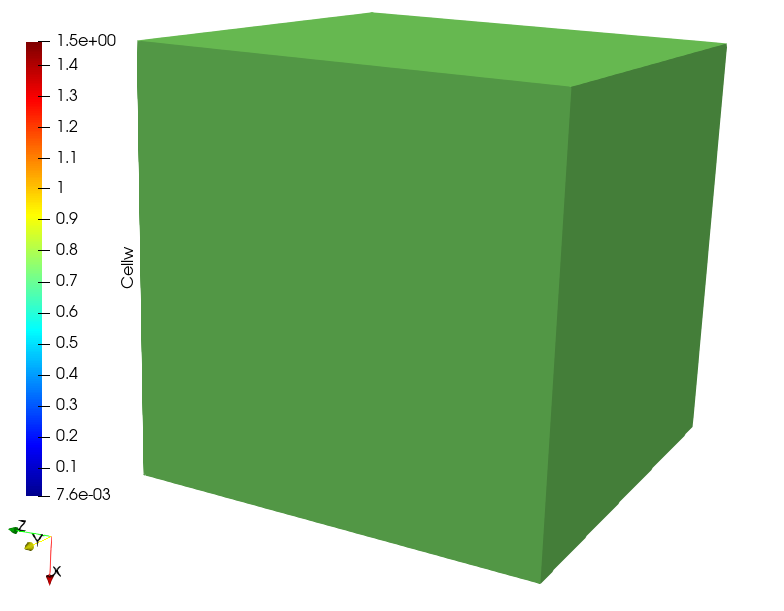}& \includegraphics[width=38mm]{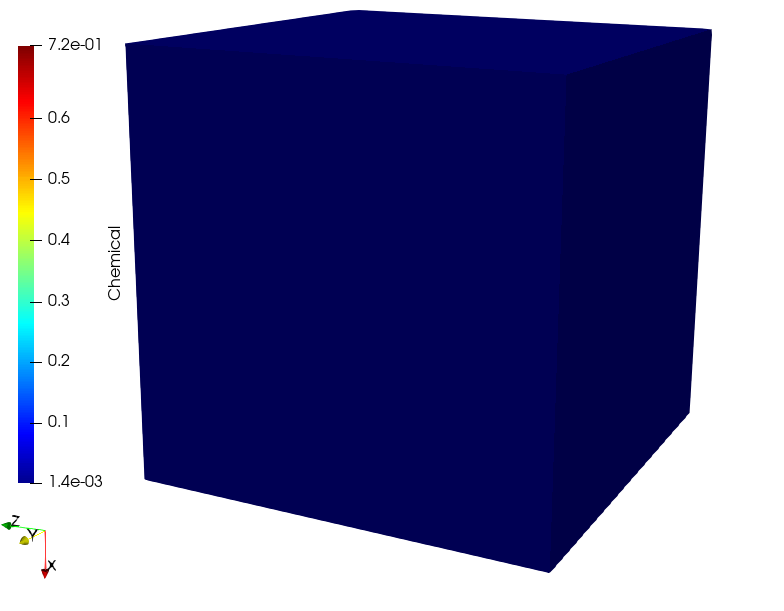}  \\[-2mm]
\end{tabular}
\end{center}
\figcaption{Evolution in time of $n^m_h$, $w^m_h$ and $c^m_h$, for $a_1=0.25$ and $a_2=0.3$.} \label{fig:NWC}
\end{minipage}

\

\

In the second case,  we consider $a_1=2$ and $a_2=0.3$, and the evolution in time for $n^m_h$, $w^m_h$, $c^m_h$ and ${\bf u}^m_h$ are presented in Figures  \ref{fig:NWC1} and  \ref{fig:VF1}. We observe a similar behaviour as in the previous case, with the difference that, in this case, the parameter $a_1>1$ and $a_2<1$,  and thus the species $w$ dominates species $n$, and the latter ends up extinguishing.  In fact, in Figures \ref{fig:NWC1} and \ref{fig:DT1}, it can be observed that, for large times, the total density of  species $w$ tends to  $1$, while  the total density of  species $n$ tends to $0$, which is agreement with the results reported in \cite{HYJ}.

\

\

\begin{minipage}{\textwidth}
	\begin{center}
\begin{tabular}{cccc}
\hspace{-0.8cm}\textbf{Time}& $n^m_h$ & $w^m_h$  & $c^m_h$  \\[1mm]
\hspace{-0.8cm}{$t=2.5\times10^{-2}$} &\includegraphics[width=38mm]{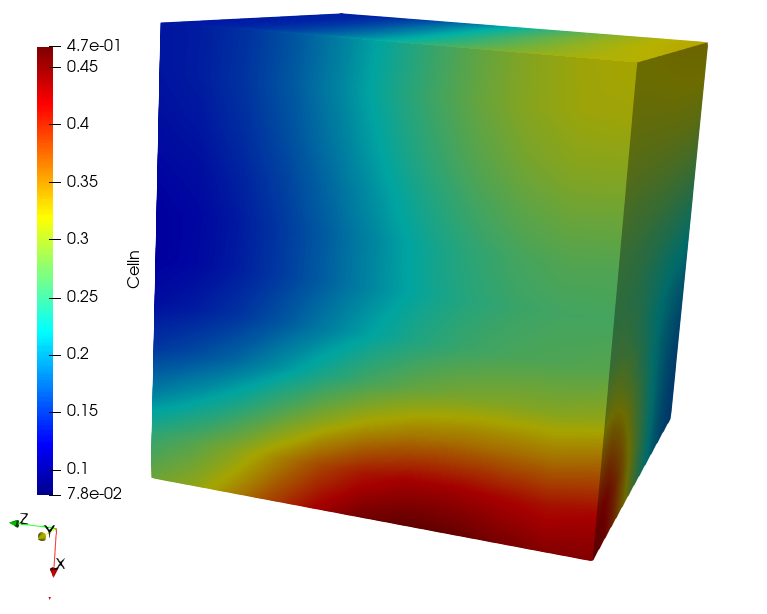} & \includegraphics[width=38mm]{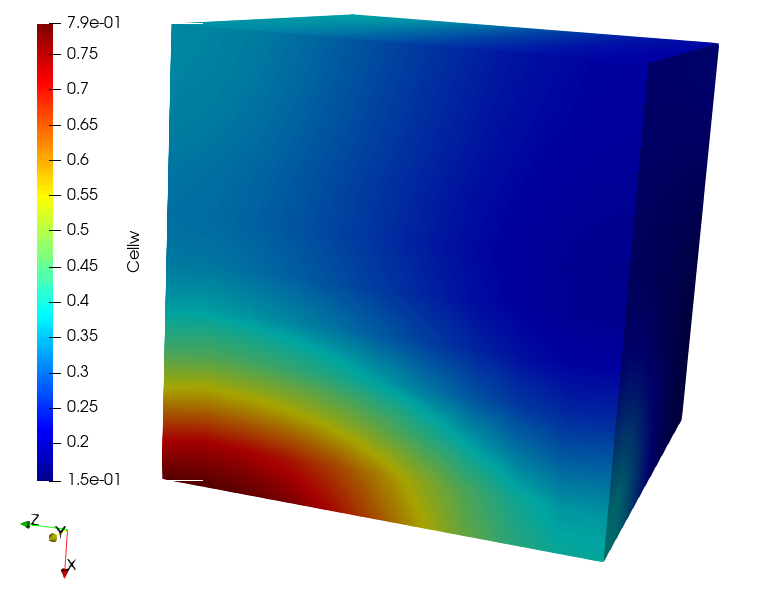}& \includegraphics[width=38mm]{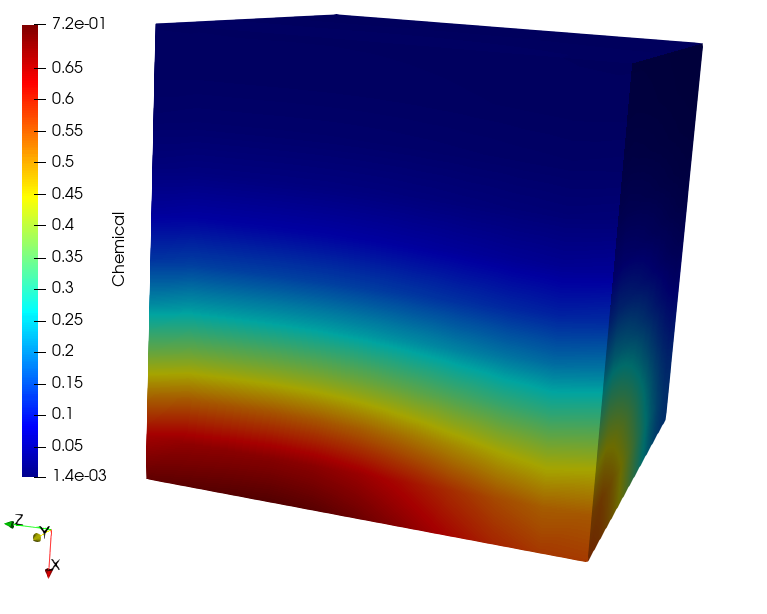}  \\[1mm]
\hspace{-0.8cm}{$t=21$} &\includegraphics[width=38mm]{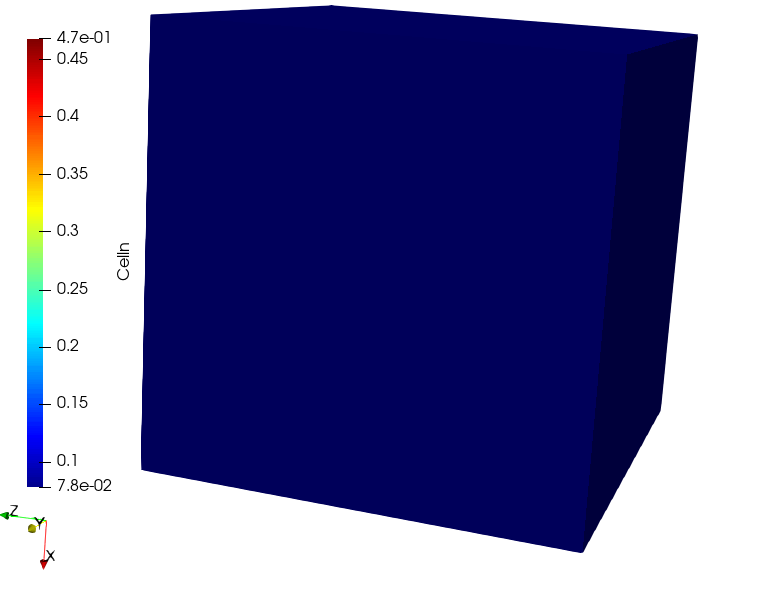} & \includegraphics[width=38mm]{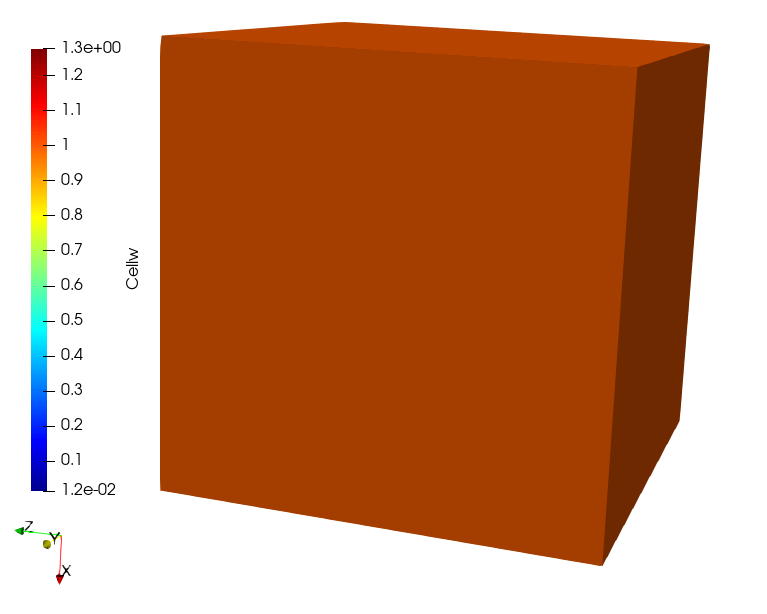}& \includegraphics[width=38mm]{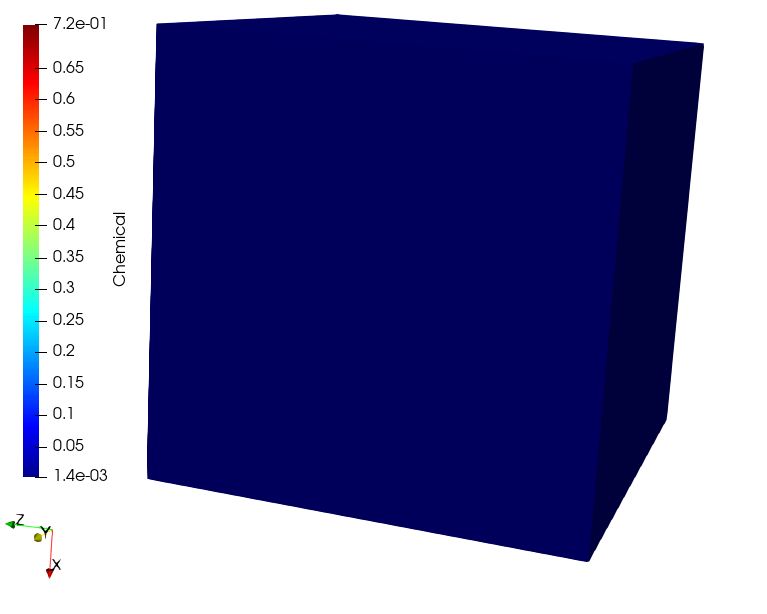}  \\[-2mm]
\end{tabular}
\end{center}
\figcaption{Evolution in time of $n^m_h$, $w^m_h$ and $c^m_h$, for $a_1=2$ and $a_2=0.3$.} \label{fig:NWC1}
\end{minipage}

\

\

\begin{center}
\begin{tabular}{cccc}
\includegraphics[width=38mm]{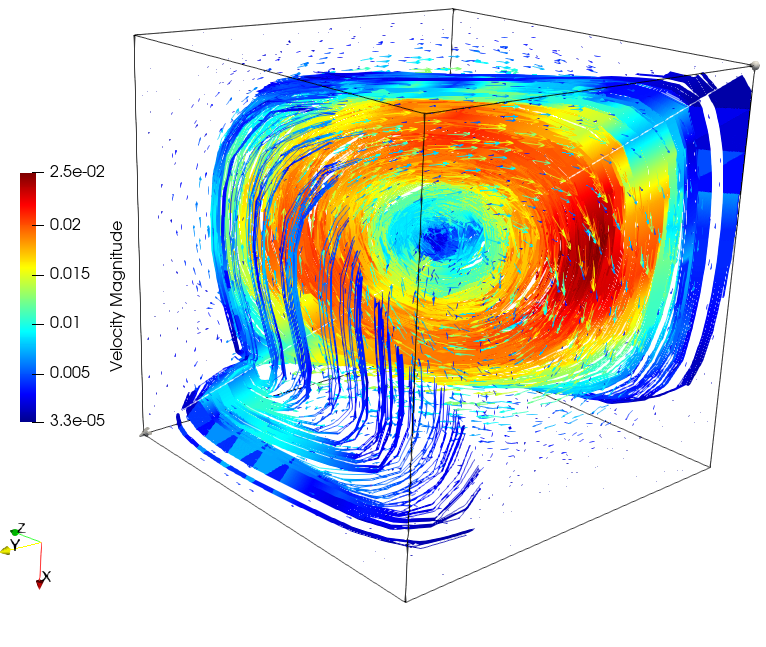} & \includegraphics[width=38mm]{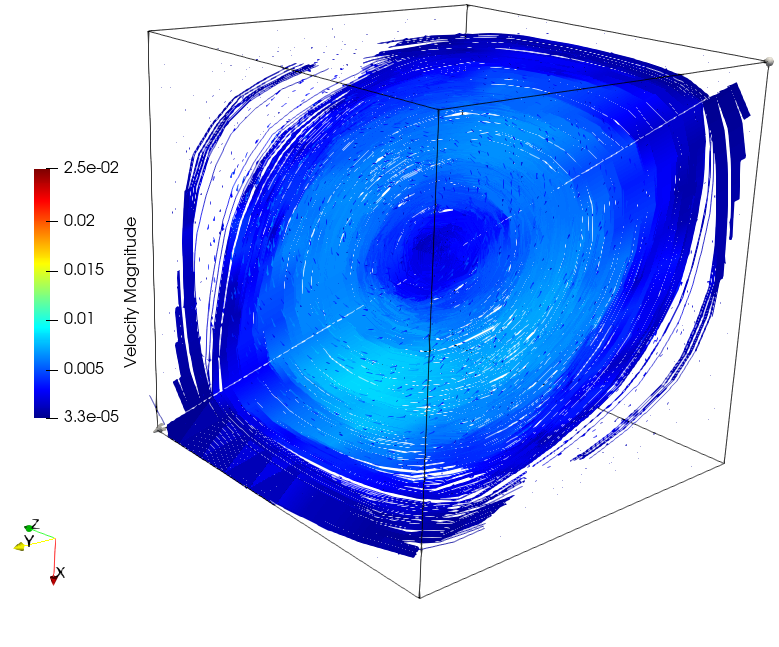} & 
\includegraphics[width=38mm]{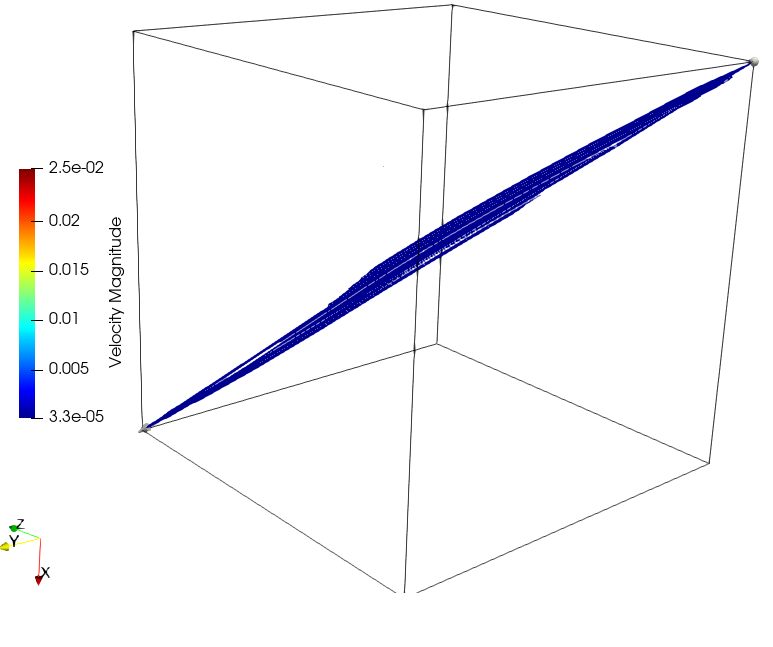}  \\[-3mm]
a)  $t=2.5\times10^{-2}$ & b) $t=15\times10^{-2}$ & 
c)  $t=15\times10^{-1}$\\ [1mm]
 \end{tabular}
\figcaption{Evolution in time of the velocity field ${\bf u}^m_h$, for $a_1=2$ and $a_2=0.3$.}\label{fig:VF1}
\end{center}

\

\begin{center}
\includegraphics[width=90mm]{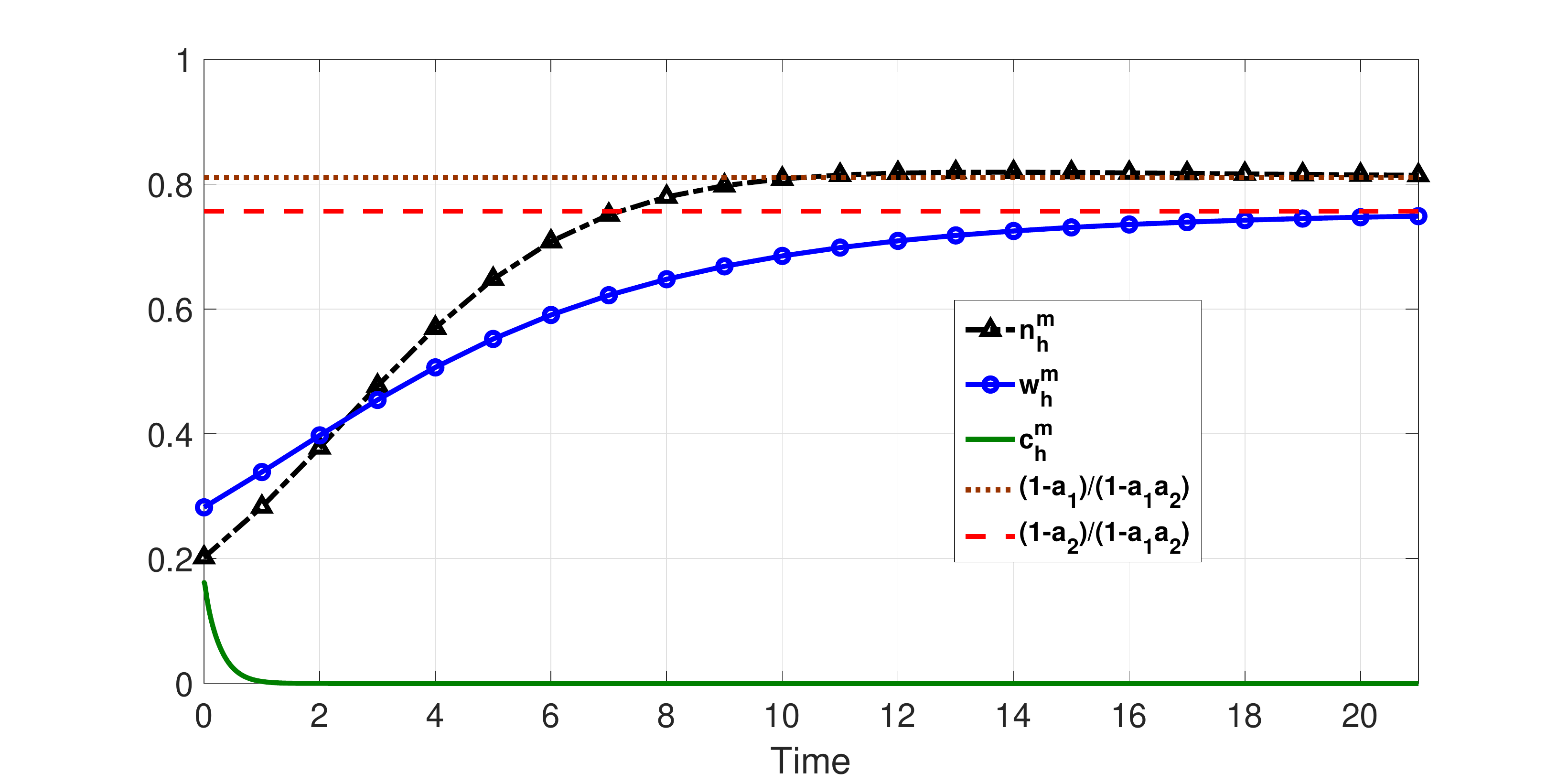}
\figcaption{Evolution in time of the total density of $n$ and $w$, and the total concentration of $c$, for $a_1=0.25$ and $a_2=0.3$.}\label{fig:DT}
\end{center}

\begin{center}
\includegraphics[width=90mm]{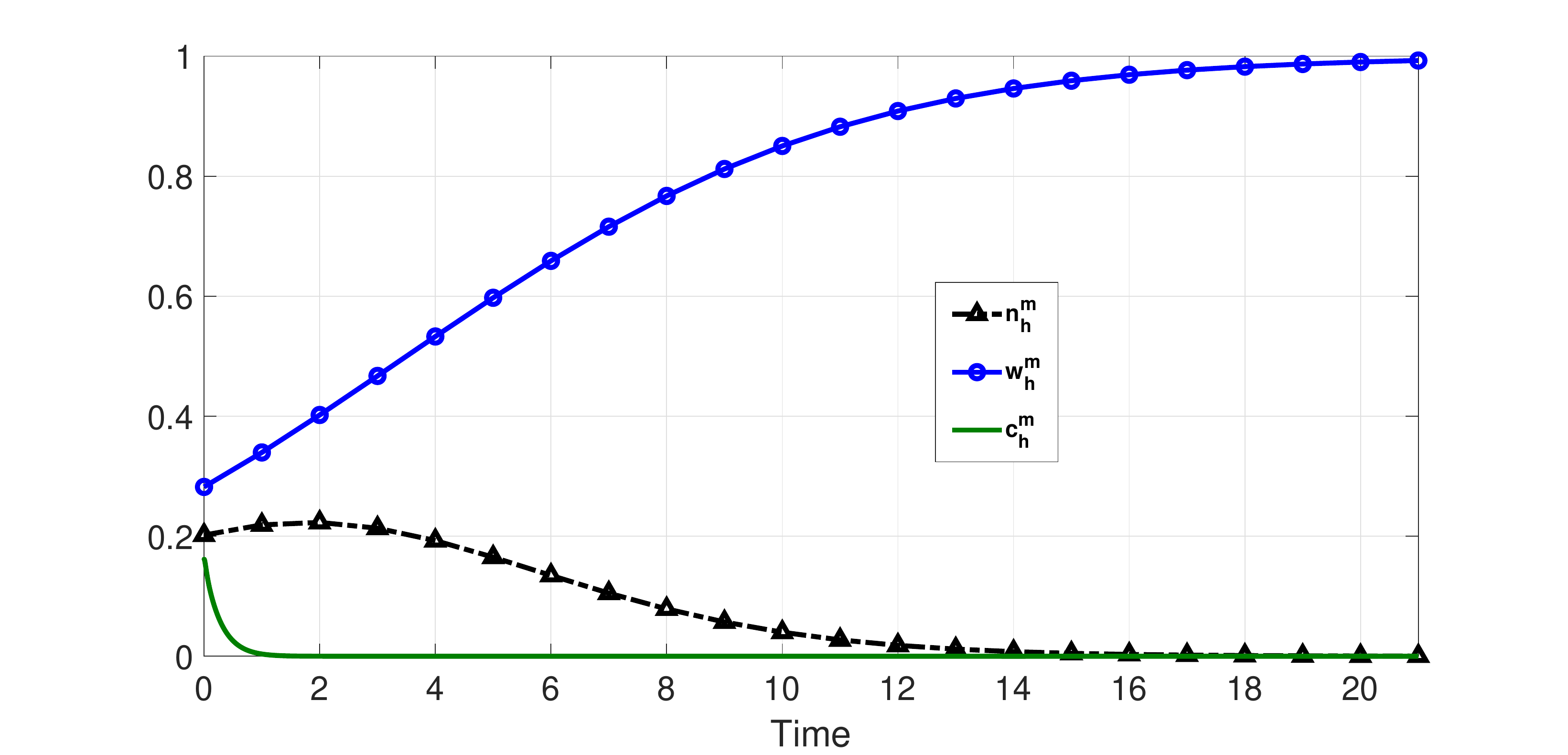}
\figcaption{Evolution in time of the total density of $n$ and $w$, and the total concentration of $c$, for $a_1=2$ and $a_2=0.3$.}\label{fig:DT1}
\end{center}

Finally, we consider $a_1=0.25$ and $a_2=2$. In this case, the species $n$ dominates to the species $w$, which tends to the  extinction (see Figure \ref{fig:DT2}).
\\
\begin{center}
\includegraphics[width=90mm]{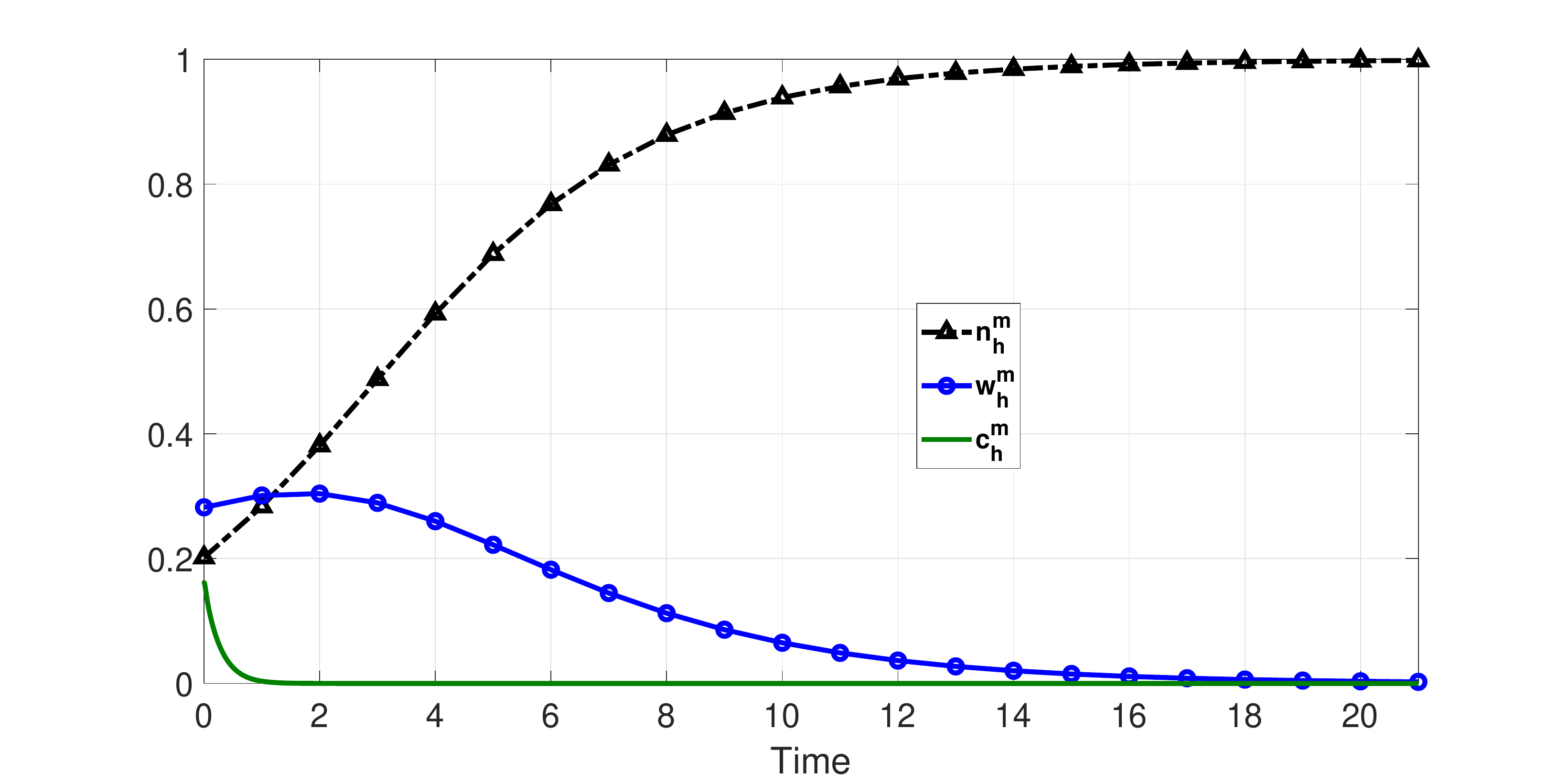}
\figcaption{Evolution in time of the total density of $n$ and $w$, and the total concentration of $c$, for $a_1=0.25$ and $a_2=2$.}\label{fig:DT2}
\end{center}
\bigskip

\underline{\it{Test 2}}:\ In this experiment we have considered the domain  $\Omega=[0,1] \times [0,1]$. Moreover,  we consider the exact solution
$$n=e^{-t}(cos(2\pi x) +cos(2\pi y) +3), \ \ w=e^{-t}(cos(2\pi y) - cos(2\pi x) +6),$$
$$ c=e^{-t}(sin(2\pi y)+cos(2\pi x) - 2\pi y +9), \ \ {\boldsymbol\sigma}=\nabla c= 2\pi e^{-t} (-sin(2\pi x), cos(2\pi y)-1),$$
$$ \mathbf{u}=e^{-t}(sin(2 \pi y)(-cos(2\pi x + \pi)-1),sin(2 \pi x)(cos(2\pi y + \pi)+1)),$$ 
$$ \pi=e^{-t}(sin(2 \pi y)+cos(2\pi x)),$$ and all parameters in (\ref{scheme1}) equal to 1. Note that $\mathbf{u}={\bf 0}$ and $\frac{\partial c}{\partial \boldsymbol{\nu}}=\frac{\partial n}{\partial \boldsymbol{\nu}}=\frac{\partial w}{\partial \boldsymbol{\nu}}=0$ on $\partial\Omega$, $\nabla \cdot \mathbf{u}=0$ in $\Omega$ and $\int_{\Omega} \pi =0$. Moreover, we use a uniform partition with $k+1$ nodes in each direction.

On the one hand, numerical results {of spatial convergence rates} are presented in Tables \ref{T1}-\ref{T6} for {$\Delta t=8.33\times 10^{-5}$} with respect to time $T=1$. We observe optimal convergence rates in space for the total errors  $e^m_{n},e^m_{w},e^m_{c}, e^m_{\mathbf{u}}$, that is,  second-order convergence in $l^{\infty}(L^2)$-norm, first-order convergence  in $l^{2}(H^1)$-norm and first-order convergence for $e^m_{\mathbf{u}}$ in $l^{\infty}(H^1)$-norm, which is in
agreement with our theoretical analysis.

\bigskip

\begin{minipage}{\textwidth}
	\begin{footnotesize} 
		\begin{center}
			\begin{tabular}{|| c | c | c | c | c||}
				\hline
				\hline
				$k \times k$ & $\Vert n(t_m) - n^m_h \Vert_{l^\infty(L^2)}$ & Order & $\Vert n(t_m) - n^m_h \Vert_{l^2(H^1)}$ & Order  \\
				\hline
				$10 \times 10$ & $5.677008 \times 10^{-2}$  & -   & $7.642456 \times 10^{-1}$  & - \\ 
				\hline
				$16 \times 16$ &  $2.227926\times 10^{-2}$ & 1.9901 & $4.715734 \times 10^{-1}$ & 1.0273 \\      
				\hline
				$22 \times 22$  &  $1.179489\times 10^{-2}$   & 1.9971 & $3.417503 \times 10^{-1}$ & 1.0111 \\       
				\hline
				$28 \times 28$  &  $7.277616 \times 10^{-3}$ & 2.0022 & $2.681382 \times 10^{-1}$ & 1.0059 \\
				\hline
				$34 \times 34$  & $4.929015 \times 10^{-3}$   & 2.0070  &  $2.206667 \times 10^{-1}$  & 1.0036 \\    
				\hline
				\hline
			\end{tabular}
		\tabcaption{Convergence rates in space for $n$.} 
		\label{T1} 
			\end{center}
	\end{footnotesize} 
\end{minipage}

\begin{minipage}{\textwidth}
	\begin{footnotesize} 
		\begin{center}
			\begin{tabular}{|| c | c | c | c | c||}
				\hline
				\hline
				$k \times k$ & $\Vert w(t_m) - w^m_h \Vert_{l^\infty(L^2)}$ & Order & $\Vert w(t_m) - w^m_h \Vert_{l^2(H^1)}$ & Order  \\
				\hline
				$10 \times 10$ & $6.639095 \times 10^{-2}$  & -   & $8.345002 \times 10^{-1}$  & - \\ 
				\hline
				$16 \times 16$ &  $2.607234\times 10^{-2}$ & 1.9887 & $4.898759 \times 10^{-1}$ & 1.1334 \\      
				\hline
				$22 \times 22$  &  $1.379666\times 10^{-2}$   & 1.9986 & $3.489831 \times 10^{-1}$ & 1.0649 \\       
				\hline
				$28 \times 28$  &  $8.509213 \times 10^{-3}$ & 2.0040 & $2.716992 \times 10^{-1}$ & 1.0380 \\
				\hline
				$34 \times 34$  & $5.761554 \times 10^{-3}$   & 2.0084  &  $2.226744 \times 10^{-1}$  & 1.0249 \\    
				\hline
				\hline
			\end{tabular}
		\tabcaption{Convergence rates in space for $w$.} 
		\label{T2} 
			\end{center}
	\end{footnotesize} 
\end{minipage}

\begin{minipage}{\textwidth}
	\begin{footnotesize} 
		\begin{center}
			\begin{tabular}{|| c | c | c | c | c||}
				\hline
				\hline
				$k \times k$ & $\Vert c(t_m) - c^m_h \Vert_{l^\infty(L^2)}$ & Order & $\Vert c(t_m) - c^m_h \Vert_{l^2(H^1)}$ & Order  \\
				\hline
				$10 \times 10$ & $3.573118 \times 10^{-2}$  & -   & $7.429560 \times 10^{-1}$  & - \\ 
				\hline
				$16 \times 16$ &  $1.403060 \times 10^{-2}$ & 1.9889 & $4.666742 \times 10^{-1}$ & 0.9894 \\      
				\hline
				$22 \times 22$  &  $7.432849 \times 10^{-3}$   & 1.9951 & $3.399509 \times 10^{-1}$ & 0.9949 \\       
				\hline
				$28 \times 28$  &  $4.591755 \times 10^{-3}$ & 1.9972 & $2.672989 \times 10^{-1}$ & 0.9970 \\
				\hline
				$34 \times 34$  & $3.115226 \times 10^{-3}$   & 1.9982  &  $2.202143 \times 10^{-1}$  & 0.9980 \\    
				\hline
				\hline
			\end{tabular}
		\tabcaption{Convergence rates in space for $c$.} 
		\label{T3} 
			\end{center}
	\end{footnotesize} 
\end{minipage}

\begin{minipage}{\textwidth}
	\begin{footnotesize} 
		\begin{center}
			\begin{tabular}{|| c | c | c | c | c||}
				\hline
				\hline
				$k \times k$ & $\Vert \mathbf{u}_1(t_m) - (\mathbf{u}_1)^m_h \Vert_{l^\infty(L^2)}$ & Order & $\Vert\mathbf{u}_1(t_m) - (\mathbf{u}_1)^m_h \Vert_{l^2(H^1)}$ & Order  \\
				\hline
				$10 \times 10$ & $5.462052 \times 10^{-2}$  & -   & $9.994213 \times 10^{-1}$  & - \\ 
				\hline
				$16 \times 16$ &  $2.147830 \times 10^{-2}$ & 1.9859 & $6.264182 \times 10^{-1}$ & 0.9939 \\      
				\hline
				$22 \times 22$  &  $1.135494 \times 10^{-2}$   & 2.0015 & $4.556049 \times 10^{-1}$ & 0.9998 \\       
				\hline
				$28 \times 28$  &  $6.996890 \times 10^{-3}$ & 2.0077 & $3.578633 \times 10^{-1}$ & 1.0013 \\
				\hline
				$34 \times 34$  & $4.733978 \times 10^{-3}$   & 2.0123  &  $2.946102 \times 10^{-1}$  & 1.0018 \\    
				\hline
				\hline
			\end{tabular}
		\tabcaption{Convergence rates in space for $\mathbf{u}_1$ in weak norms.} 
		\label{T4} 
			\end{center}
	\end{footnotesize} 
\end{minipage}

\begin{minipage}{\textwidth}
	\begin{footnotesize} 
		\begin{center}
			\begin{tabular}{|| c | c | c | c | c||}
				\hline
				\hline
				$k \times k$ & $\Vert \mathbf{u}_2(t_m) - (\mathbf{u}_2)^m_h \Vert_{l^\infty(L^2)}$ & Order & $\Vert\mathbf{u}_2(t_m) - (\mathbf{u}_2)^m_h \Vert_{l^2(H^1)}$ & Order  \\
				\hline
				$10 \times 10$ & $5.455275 \times 10^{-2}$  & -   & $9.994666 \times 10^{-1}$  & - \\ 
				\hline
				$16 \times 16$ &  $2.145280 \times 10^{-2}$ & 1.9858 & $6.264266 \times 10^{-1}$ & 0.9940 \\      
				\hline
				$22 \times 22$  &  $1.134125 \times 10^{-2}$   & 2.0016 & $4.556055 \times 10^{-1}$ & 0.9998 \\       
				\hline
				$28 \times 28$  &  $6.988373 \times 10^{-3}$ & 2.0078 & $3.578626 \times 10^{-1}$ & 1.0013 \\
				\hline
				$34 \times 34$  & $4.728202 \times 10^{-3}$   & 2.0123  &  $2.946094 \times 10^{-1}$  & 1.0018 \\    
				\hline
				\hline
			\end{tabular}
		\tabcaption{Convergence rates in space for $\mathbf{u}_2$ in weak norms.} 
		\label{T5} 
			\end{center}
	\end{footnotesize} 
\end{minipage}

\begin{minipage}{\textwidth}
	\begin{footnotesize} 
		\begin{center}
			\begin{tabular}{|| c | c | c | c | c||}
				\hline
				\hline
				$k \times k$ & $\Vert \mathbf{u}_1(t_m) - (\mathbf{u}_1)^m_h \Vert_{l^{\infty}(H^1)}$ & Order & $\Vert\mathbf{u}_2(t_m) - (\mathbf{u}_2)^m_h \Vert_{l^{\infty}(H^1)}$ & Order  \\
				\hline
				$10 \times 10$ & $2.335301 $  & -   & $2.335301$  & - \\ 
				\hline
				$16 \times 16$ &  $1.480473$ & 0.9697 & $1.480473$ & 0.9697 \\      
				\hline
				$22 \times 22$  &  $1.081356$   & 0.9865 & $1.081356$ & 0.9865\\       
				\hline
				$28 \times 28$  &  $8.512129 \times 10^{-1}$ &0.9923 & $8.512129 \times 10^{-1}$ & 0.9923 \\
				\hline
				$34 \times 34$  & $7.016738 \times 10^{-1}$   & 0.9950  &  $7.016738 \times 10^{-1}$  & 0.9950 \\    
				\hline
				\hline
			\end{tabular}
		\tabcaption{Convergence rates in space for $\mathbf{u}_1$ and $\mathbf{u}_2$ in strong norms.} 
		\label{T6} 
			\end{center}
	\end{footnotesize} 
\end{minipage}

{
On the other hand, some numerical results of convergence rates in time are listed in Tables \ref{T7}-\ref{T9} for $h=1/160$ (that is, $k=160$ nodes in space in each direction), with respect to time $T=5$. We observe approximately the first-order convergence in time in weak and strong norms for all variables, which is in
	agreement with our theoretical analysis.\\

\begin{minipage}{\textwidth}
	\begin{footnotesize} 
		\begin{center}
			\begin{tabular}{|| c | c | c | c | c||}
				\hline
				\hline
				$\Delta t$ & $\Vert w(t_m) - w^m_h \Vert_{l^{\infty}(L^2)}$ & Order & $\Vert n(t_m) - n^m_h \Vert_{l^2(H^1)}$ & Order  \\
				\hline
				$1.04 \times 10^{-1}$ & $1.9737\times 10^{-1} $  & -   & $3.3887\times 10^{-1} $  & - \\ 
				\hline
				$8.92 \times 10^{-2}$ &  $1.6237 \times 10^{-1}$ & 1.2660 & $2.7748 \times 10^{-1}$ & 1.2965 \\      
				\hline
				$7.81 \times 10^{-2}$  &  $1.3762 \times 10^{-1}$   & 1.2385 & $2.3649 \times 10^{-1}$ & 1.1972\\       
				\hline
				$6.94 \times 10^{-2}$  &  $1.1925\times 10^{-1}$ & 1.2168 & $2.0741\times 10^{-1}$ & 1.1137 \\
			\hline
			$6.25 \times 10^{-2}$  & $1.0509 \times 10^{-1}$   & 1.1991 &  $1.8573 \times 10^{-1}$  & 1.0479  \\    
				\hline
				\hline
			\end{tabular}
			\tabcaption{Convergence rates in time for $w$ and $n$.} 
			\label{T7} 
		\end{center}
	\end{footnotesize} 
\end{minipage}	

	\begin{minipage}{\textwidth}
	\begin{footnotesize} 
		\begin{center}
			\begin{tabular}{|| c | c | c | c | c||}
				\hline
				\hline
				$\Delta t$ & $\Vert \mathbf{u}_1(t_m) - (\mathbf{u}_1)^m_h \Vert_{l^{\infty}(L^2)}$ & Order & $\Vert \mathbf{u}_2(t_m) - (\mathbf{u}_2)^m_h \Vert_{l^\infty(L^2)}$ & Order  \\
				\hline
				$1.04 \times 10^{-1}$ & $3.6109\times 10^{-2} $  & -   & $3.6056\times 10^{-2}$  & - \\ 
				\hline
				$8.92 \times 10^{-2}$ &  $3.1504 \times 10^{-2}$ & 0.8850 & $3.1441 \times 10^{-2}$ & 0.8886 \\      
				\hline
				$7.81 \times 10^{-2}$  &  $2.7871 \times 10^{-2}$   & 0.9175 & $2.7801 \times 10^{-2}$ & 0.9214\\       
				\hline
				$6.94 \times 10^{-2}$  &  $2.4934 \times 10^{-2}$ & 0.9455& $2.4859 \times 10^{-2}$ & 0.9495 \\
			\hline
			$6.25 \times 10^{-2}$  & $2.2511 \times 10^{-2}$   & 0.9703 &  $2.2434 \times 10^{-2}$  & 0.9743 \\    
				\hline
				\hline
			\end{tabular}
			\tabcaption{Convergence rates in time for $\mathbf{u}_1$ and $\mathbf{u}_2$.} 
			\label{T8} 
		\end{center}
	\end{footnotesize} 
\end{minipage}

\begin{minipage}{\textwidth}
	\begin{footnotesize} 
		\begin{center}
			\begin{tabular}{|| c | c | c ||}
				\hline
				\hline
				$\Delta t$ & $\Vert c(t_m) - c^m_h \Vert_{l^{2}(H^1)}$ & Order   \\
				\hline
				$1.04 \times 10^{-1}$ & $6.6504\times 10^{-1} $  & -    \\ 
				\hline
				$8.92 \times 10^{-2}$ &  $5.7032 \times 10^{-1}$ & 0.9968  \\      
				\hline
				$7.81 \times 10^{-2}$  &  $4.9963 \times 10^{-1}$   & 0.9910 \\       
				\hline
				$6.94 \times 10^{-2}$  &  $4.4483 \times 10^{-1}$ & 0.9863  \\
			\hline
			$6.25 \times 10^{-2}$  & $4.0110 \times 10^{-1}$   & 0.9821 \\    
				\hline
				\hline
			\end{tabular}
			\tabcaption{Convergence rates in time for $c$.} 
			\label{T9} 
		\end{center}
	\end{footnotesize} 
\end{minipage}
}

{
\section*{Acknowledgements}
The authors have been supported by the Vicerrector\'ia de Investigaci\'on y Extensi\'on of Universidad Industrial de Santander.
}

\end{document}